\documentclass[12pt,reqno]{amsart}
\usepackage{graphicx}

\usepackage{enumerate}
\usepackage{amsmath, amsthm} 
\usepackage{amsfonts}
\usepackage{amssymb}
\usepackage{fullpage} 
\usepackage{xcolor}

\newtheorem{theorem}{Theorem}[subsection]

\newtheorem{conj}[theorem]{Conjecture}

\newtheorem{lemma}[theorem]{Lemma}
\newtheorem{proposition}[theorem]{Proposition}
\newtheorem{corollary}[theorem]{Corollary}

\theoremstyle{definition}
\newtheorem{definition}{Definition}[subsection]
\newtheorem{example}[definition]{Example}
\newtheorem{remark}[definition]{Remark}

\usepackage{mathabx,amsmath,amscd, latexsym, amssymb, bbold}

\usepackage{amsmath, amssymb, tikz}

\DeclareMathOperator{\ev}{ev}
\DeclareMathOperator{\coev}{coev}

\def\bC{\mathbb{C}}
\def\bZ{\mathbb{Z}}

\def\cC{\mathcal{C}}

\def\NK{{\mathcal{K}}}
\def\Kn{{\NK_n}}
\def\RKn{\widetilde{D\Kn}}
\def\H{\tilde H}

\DeclareMathOperator{\id}{id}
\DeclareMathOperator{\Hom}{Hom}
\DeclareMathOperator{\Aut}{Aut}
\DeclareMathOperator{\Rep}{Rep}
\DeclareMathOperator{\sgn}{sgn}

\renewcommand{\Vec}{\mathrm{Vec}}
\def\Vecm{\Vec_{\bZ_2}^-}
\DeclareMathOperator{\sVec}{sVec}
\DeclareMathOperator{\End}{End}

\DeclareMathOperator{\Id}{Id}

\def\bF{\mathbb{F}}
\def\incl{\hookrightarrow}
\def\rib{\tilde{\mathbf{v}}}


\newcommand{\mcA}{\mathcal{A}}

\newcommand{\mcK}{\mathcal{K}}


\newcommand{\mbbQ}{\mathbb{Q}}



\title{On the formal ribbon extension of a quasitriangular Hopf algebra}
\date{}

\author[ucsb]{Quinn T. Kolt}
\email{quinn@math.ucsb.edu}

\address{Department of Mathematics, University of California, Santa Barbara, CA 93106, USA}

\begin{document}
\begin{abstract}
    Any finite-dimensional quasitriangular Hopf algebra $H$ can be formally extended to a ribbon Hopf algebra $\H$ of twice the dimension. We investigate this extension and its representations. We show that every indecomposable $H$-module has precisely two compatible $\H$-actions. We investigate the behavior of simple, projective, and M\"uger central $\H$-modules in terms of these $\H$-actions. We also observe that, in the semisimple case, this construction agrees with the pivotalization/sphericalization construction introduced by Etingof, Nikshych, and Ostrik (2003). As an example, we investigate the formal ribbon extension of odd-index doubled Nichols Hopf algebras $D\Kn$. 
\end{abstract}
\maketitle

\tableofcontents

\section{Introduction}
    Reshetikhin and Turaev \cite{ReshetikhinTuraev1990} introduced the notion of a ribbon Hopf algebra and showed that any quasitriangular Hopf algebra $H$ can be formally extended to a ribbon Hopf algebra $\H$ of twice the dimension. These formal ribbon extensions are further discussed in \cite{Andruskiewitsch2014hopftensor} wherein Sommerh\"auser remarks that $\H$ can be factored as a cocycled crossed product of $H$ and $\bC[\bZ_2]$. This article seeks to study these extensions and their representations. Specifically, we seek a concrete understanding of the tensor category $\Rep(\H)$ in terms of $\Rep(H)$. Our interest lies in generating examples of both semisimple and non-semisimple ribbon categories, as well as better understanding braided tensor categories which are not ribbon.

    The question of whether every braided fusion category is ribbon remains open. This is a special case of the question of whether every fusion category has a spherical structure \cite{eno2005fusion}. Etingof, Nikshych, and Ostrik \cite{eno2005fusion} showed that every fusion category $\cC$ embeds into a spherical fusion category of twice the dimension $\tilde\cC$. By Theorem \ref{pivotalization-agrees}, this construction agrees with the formal ribbon extension explored here, i.e., $\Rep(\H)\cong\widetilde{\Rep(H)}$ for semisimple $H$. Consequently, we may interpret the formal ribbon extension as a generalization of sphericaalization to the non-semisimple braided case.

    Topological quantum field theories (TQFTs) provide another motivation for studying ribbon Hopf algebras. Reshetikhin-Turaev and Crane-Yetter-Hauffman TQFTs are semisimple (2+1)- and (3+1)-TQFTs which come from certain ribbon Hopf algebras (and more generally ribbon fusion categories) \cite{Crane1997statesum,Reshetikhin1991invariants,turaev1992modular}. Recently, there has been much interest in non-semisimple TQFTs. One potential advantage of studying non-semisimple TQFTs is that semisimple (3+1)-TQFTs are known to be unable to distinguish exotic smooth structure \cite{Reutter2023semisimple}. It is unknown whether this is also true for non-semisimple (3+1)-TQFTs. Like in the semisimple case, one build non-semisimple (2+1)- and (3+1)-TQFTs with  non-semisimple ribbon categories \cite{Costantino:2023bjb,DeRenzi2022tqft,kerler2003homology}. Consequently, generating examples of ribbon Hopf algebras, as we do here, may have interesting applications to both topology and physics.
    
    The present article is ordered as follows; in Section \ref{formal-ribbon-sec}, we study the basic properties of $\H$, including expanding on two known factorizations of $\H$ in terms of $H$ and $\bF[\bZ_2]$, where $\bF$ is the underlying field \cite{Andruskiewitsch2014hopftensor,ReshetikhinTuraev1990}. It follows from Sommerh\"auser's factorization that $\Rep(\H)$ fits into an exact sequence $\Vec_{\bZ_2}\to\Rep(\H)\to\Rep(H)$ of braided finite tensor categories. However, we see in Section \ref{rep-section} that $\Rep(\H)$ can be described more precisely. In Section \ref{rep-section}, we show that every $H$-module has a compatible $\H$-action, using the holomorphic functional calculus and the model completeness of the theory of algebraically closed fields. We spend the majority of this section investigating this $\H$-action. As mentioned, we also show that, if $H$ is semisimple, $\Rep(\H)$ is isomorphic to the pivotalization/sphericalization $\widetilde{\Rep (H)}$ of $\Rep (H)$ introduced in \cite{eno2005fusion}. In Section \ref{DKn-section}, we investigate the formal ribbon extension of the Drinfeld doubles of Nichols Hopf algebras\footnote{Nichols Hopf algebras are distinct from Nichols algebras of a braided vector space. However, Nichols Hopf algebras may be constructed from Nichols algebras. The terminology ``Nichols Hopf algebra'' appears in \cite{EGNO}. The algebras $D\Kn$ for even $n$ are also known as symplectic fermion ribbon Hopf algebras \cite{farsad2022symplectic}.} of odd index. Doubled Nichols Hopf algebras $D\Kn$ are always quasitriangular but ribbon if and only if $n$ is even. Thus, for odd $n$, the algebras $D\Kn$ provide a nontrivial example of our theory.

    For simplicity, we build our theory for Hopf algebras. However, most of the theory (in particular, all of Section \ref{rep-section}) explored here can easily be extended to quasitriangular weak Hopf algebras, and, hence, to braided fusion categories.

\section{Formal ribbon extensions}\label{formal-ribbon-sec}

\subsection{Notations and terminology}
    We fix an algebraically closed field $\bF$ of characteristic 0. Throughout this article, we make use of sum-less Sweedler's notation (e.g., $\Delta(x) = x^{(1)}\otimes x^{(2)}$). Similarly, if $R\in H\otimes H$ is an $R$-matrix of a quasitriangular Hopf algebra $H$, we write $R = R^{(1)}\otimes R^{(2)}$. By an $H$-module, we always mean a left $H$-module. We denote the category of finite-dimensional representations of a Hopf algebra $H$ by $\Rep(H)$ and identify it with the category of finite-dimensional $H$-modules in the standard way. For an element $h\in H$ and an $H$-module $M\in\Rep(H)$, we denote by $h\cdot -$ the map $M\to M$ given by $m\mapsto h\cdot m$. 
    
    Let $H=(H, m, 1, \Delta, \epsilon, S)$ be a Hopf algebra. Then, $H$ is:
    \begin{enumerate}
        \item \textit{quasitriangular} if there is an invertible \textit{$R$-matrix} $R\in H\otimes H$ such that
        \begin{align*}
            R\Delta(x)R^{-1} &= \Delta^{\mathrm{op}}(x), & (\Delta\otimes\id_H)(R) &= R_{13}R_{23}, & (\id_H\otimes\Delta)(R) &= R_{13}R_{12},
        \end{align*}
        where $R_{12} = R^{(1)}\otimes R^{(2)}\otimes 1$, $R_{13} = R^{(1)}\otimes 1\otimes R^{(2)}$, and $R_{23} = 1\otimes R^{(1)}\otimes R^{(2)}$;
        \item \textit{ribbon} if $H$ is quasitriangular and there is an invertible central \textit{ribbon element} ${\mathbf{v}}\in Z(H)$ such that
        \begin{align*}
            {\mathbf{v}}^2 &= uS(u), & \Delta({\mathbf{v}}) &= (R_{21}R)^{-1}({\mathbf{v}}\otimes{\mathbf{v}}), & S({\mathbf{v}}) &= {\mathbf{v}}, & \epsilon({\mathbf{v}}) &= 1,
        \end{align*}
        where $u = S(R^{(2)})R^{(1)}$ is the \textit{Drinfeld element} and $R_{21} = R^{(2)}\otimes R^{(1)}$;
        \item \textit{unimodular} if the space of left integrals $\{\Lambda\in H|h\Lambda = \epsilon(h)\Lambda,\forall h\in H\}$ and the space of right integrals $\{\Lambda\in H|\Lambda h = \epsilon(h)\Lambda,\forall h\in H\}$ coincide;
        \item \textit{factorizable} if $H$ is quasitriangular and the Drinfeld map $f_Q:H^*\to H$ given by $f_Q(\beta) = (\beta\otimes\id_H)(R_{21}R)$ is a linear isomorphism.
    \end{enumerate}

    The Drinfeld element $u$ has a handful of nice properties that we exploit throughout this article.
    \begin{proposition}\label{Drinfeld-props}
        Let $H$ be a quasitriangular Hopf algebra. Then, the Drinfeld element $u$ satisfies
        \begin{itemize}
            \item $uS(u)\in Z(H)$
            \item $\Delta(uS(u)) = (R_{21}R)^{-2}(uS(u)\otimes uS(u))$,
            \item $uS(u)^{-1}$ is grouplike,
            \item $S^2(h) = uhu^{-1}$ for all $h\in H$,
            \item $u^{-1} = R^{(2)} S^2(R^{(1)})$.
        \end{itemize}
    \end{proposition}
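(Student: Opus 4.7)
All five identities are classical facts about the Drinfeld element, and the plan is to derive them by exploiting three standard consequences of the $R$-matrix axioms: $(S\otimes\id)(R) = R^{-1}$, $(S\otimes S)(R) = R$, and the quasitriangularity relation $R\Delta(h) = \Delta^{\mathrm{op}}(h)R$. My intended order is to prove the fourth bullet first, then the fifth, deduce the first from the fourth, establish an auxiliary coproduct formula for $u$, and use it to obtain the second and third bullets.

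I would first prove the fourth bullet, Drinfeld's conjugation formula $S^2(h) = uhu^{-1}$, by the classical direct computation: expand $uh$ in Sweedler notation, apply the $R$-matrix and antipode axioms to reorganize, and recognize the result as $S^2(h)u$. The fifth bullet is then a shorter check of the same flavor, multiplying $u$ on the left of the candidate inverse $R^{(2)}S^2(R^{(1)})$ and simplifying. With the conjugation formula in hand, the first bullet is immediate: applying $S$ to $S^2(h) = uhu^{-1}$ yields $S^3(h) = S(u)^{-1}S(h)S(u)$, and substituting $h\mapsto S^{-1}(h)$ gives $S^2(h) = S(u)^{-1}hS(u)$; combining the two expressions for $S^2$ shows that $S(u)u$ commutes with every element of $H$, so $S(u)u \in Z(H)$, and since it commutes with $u$ we conclude $uS(u) = S(u)u \in Z(H)$.

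The second and third bullets both reduce to the classical Drinfeld coproduct formula
\begin{equation*}
    \Delta(u) = (R_{21}R)^{-1}(u\otimes u),
\end{equation*}
which I would prove via the standard hexagon computation: expand $\Delta(u) = \Delta(S(R^{(2)}))\Delta(R^{(1)})$ using $\Delta\circ S = \tau\circ(S\otimes S)\circ\Delta$ (where $\tau$ is the flip on $H\otimes H$) and the axioms $(\Delta\otimes\id)(R) = R_{13}R_{23}$, $(\id\otimes\Delta)(R) = R_{13}R_{12}$. Observe that $u\otimes u$ commutes with both $R$ and $R_{21}$, since the fourth bullet together with $(S^2\otimes S^2)(R) = R$ gives $(u\otimes u)R(u\otimes u)^{-1} = R$. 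Applying $\tau\circ(S\otimes S)$ to the coproduct formula and using $(S\otimes S)(R) = R$ then yields the companion identity $\Delta(S(u)) = (S(u)\otimes S(u))(R_{21}R)^{-1}$.

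Bullet two now follows by multiplying these two coproducts, collecting $(u\otimes u)(S(u)\otimes S(u)) = uS(u)\otimes uS(u)$, and using centrality of $uS(u)\otimes uS(u)$ to commute it past one of the $(R_{21}R)^{-1}$ factors. Bullet three falls out of $\Delta(uS(u)^{-1}) = \Delta(u)\Delta(S(u))^{-1} = (R_{21}R)^{-1}(u\otimes u)(R_{21}R)(S(u)^{-1}\otimes S(u)^{-1})$, which collapses to $uS(u)^{-1}\otimes uS(u)^{-1}$ after commuting $u\otimes u$ past $R_{21}R$. The main obstacle is the pair of classical computations behind $S^2(h) = uhu^{-1}$ and $\Delta(u) = (R_{21}R)^{-1}(u\otimes u)$; once these two bookkeeping exercises are in hand, every remaining assertion follows by formal manipulation.
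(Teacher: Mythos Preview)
The paper does not actually prove this proposition: it is stated without proof as a collection of standard facts about the Drinfeld element, and is simply cited later (e.g., in the proofs of Theorem~\ref{sommer} and Lemma~\ref{Drinfeld-calculations}). Your proposal supplies a correct and entirely standard derivation of these facts---the conjugation formula first, then centrality of $uS(u)$, then the coproduct formula $\Delta(u)=(R_{21}R)^{-1}(u\otimes u)$ and its consequences---so there is nothing to compare. If anything, your write-up is more detailed than what the paper requires; in the context of the paper a one-line reference to a textbook (e.g., Kassel or Radford) would suffice.
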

    
    \begin{theorem}
        Let $H$ be a finite-dimensional Hopf algebra.
        \begin{enumerate}
            \item Quasitriangular structures on $H$ are in one-to-one correspondence with braidings on $\Rep(H)$. Given an $R$-matrix $R\in H\otimes H$, the corresponding braiding $\beta:\otimes\to \otimes^{\mathrm{op}}$ on $\Rep(H)$ is generated, for $H$-modules $M, M'$, by $\beta_{M,M'}:m\otimes m'\mapsto [R^{(2)}\cdot m']\otimes [R^{(1)}\cdot m]$.
            \item Ribbon elements in $H$ are in one-to-one correspondence with ribbon structures on $\Rep(H)$. Given a ribbon element $\mathbf{v}\in H$, the corresponding ribbon structure $\theta:\Id_{\Rep(H)}\to \Id_{\Rep(H)}$ on $\Rep(H)$ is given, for an $H$-module $M$, by $\theta_{M}:m\mapsto \mathbf{v}^{-1}\cdot m$.
            \item $H$ is unimodular if and only if $\Rep(H)$ is unimodular.
            \item $H$ is factorizable if and only if $\Rep(H)$ is factorizable.
        \end{enumerate}
    \end{theorem}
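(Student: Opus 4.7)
The plan is to verify each of the four equivalences by translating between element-level data on $H$ and natural-transformation data on $\Rep(H)$, using in each case that $H$ itself, as the left regular module, generates $\Rep(H)$. The forward direction of each part is a direct computation, while the reverse direction is essentially a small Tannakian reconstruction statement.

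For part (1), the forward direction: given $R$ and the proposed $\beta_{M,M'}(m\otimes m') = (R^{(2)}\cdot m')\otimes (R^{(1)}\cdot m)$, I would verify $H$-linearity from $R\Delta(x) = \Delta^{\mathrm{op}}(x)R$ and the two hexagon axioms from $(\Delta\otimes\id_H)(R) = R_{13}R_{23}$ and $(\id_H\otimes\Delta)(R) = R_{13}R_{12}$. For the reverse direction, given a braiding $\beta$ on $\Rep(H)$, I would extract $R := \tau\circ\beta_{H,H}(1\otimes 1)\in H\otimes H$, where $\tau$ is the factor swap and $H$ carries its left regular action; since $1\otimes 1$ generates $H\otimes H$ as a right $H\otimes H$-module and $\beta_{H,H}$ is $H$-linear, this element determines $\beta$ on all of $\Rep(H)$ via the stated formula, and the $R$-matrix axioms transfer back from naturality and the hexagons.

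Part (2) is analogous: centrality of $\mathbf{v}$ gives $H$-linearity of $\theta_M = \mathbf{v}^{-1}\cdot(-)$; the coproduct relation $\Delta(\mathbf{v}) = (R_{21}R)^{-1}(\mathbf{v}\otimes\mathbf{v})$ gives compatibility with tensor products and the braiding; and $S(\mathbf{v}) = \mathbf{v}$ together with $\mathbf{v}^2 = uS(u)$ and Proposition \ref{Drinfeld-props} gives compatibility with duals. Conversely, evaluating a ribbon structure $\theta$ on the regular module and reading off $\theta_H^{-1}(1)\in H$ produces the candidate ribbon element, whose centrality and ribbon identities follow from $H$-linearity of $\theta_H$ and from the categorical ribbon axioms applied to $H$, $H\otimes H$, and $H^*$.

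Parts (3) and (4) are then standard consequences of this dictionary. Unimodularity of $\Rep(H)$ is defined by triviality of the distinguished invertible object, which via Radford's formulas is identified with the ratio of right to left integrals in $H$, so its triviality is equivalent to coincidence of left and right integrals. Factorizability of $\Rep(H)$ is non-degeneracy of the induced braiding, and under the correspondence of (1) this non-degeneracy is computed by precisely the Drinfeld map $f_Q$. The main technical hurdle is the reverse directions of (1) and (2), where one must confirm that a natural transformation on $\Rep(H)$ genuinely arises from an element of $H\otimes H$ (resp.\ $H$); I would either cite the standard reconstruction references or give a short direct argument exploiting that $H$ is a projective generator of $\Rep(H)$, rather than reprove Tannakian reconstruction from scratch.
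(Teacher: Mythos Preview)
The paper does not actually prove this theorem: it is stated in the ``Notations and terminology'' subsection as a standard background result, with no proof or citation attached, and the text moves on immediately to the definition of $\Vecm$. So there is nothing in the paper to compare your proposal against.

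That said, your sketch is a reasonable outline of how these well-known equivalences are established in the literature (e.g., in Kassel or EGNO). A few small comments if you wanted to flesh it out: in part (1), the element you extract should be $R = \tau(\beta_{H,H}(1\otimes 1))$ with the action being the \emph{left} regular action on each factor, and you need naturality with respect to right multiplication maps $r_h:H\to H$ (which are left $H$-module maps) to pin down the formula on all elements, not just $1\otimes 1$; in part (3), the distinguished invertible object of $\Rep(H)$ is governed by the modular function (the distinguished grouplike in $H^*$), not directly by ``the ratio of right to left integrals,'' though these are of course linked; and in part (4), the categorical notion of factorizability (non-degeneracy of the M\"uger center, or invertibility of the canonical map to the Drinfeld center) is equivalent to bijectivity of the Drinfeld map, but this equivalence itself requires an argument (due to Shimizu in the non-semisimple case) that you are implicitly citing rather than proving.
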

    
    We denote by $\Vecm$ the ribbon fusion category whose underlying fusion category is the $\bZ_2$-graded vector spaces $\Vec_{\bZ_2}$ with simple representatives $V^+$ and $V^-$, whose braiding $\beta:\otimes\to \otimes^{\mathrm{op}}$ is trivial and nontrivial twist $\theta:\Id_{\Vecm}\to\Id_{\Vecm}$ satisfies $\theta_{V^-} = -\id_{V^-}$. In particular, if $\bF[\bZ_2]$ is the ribbon Hopf algebra with $R$-matrix $R=1\otimes 1$ and ribbon element $\mathbf{v}=g$, where $\langle g\rangle = \bZ_2$, then $\Vecm\cong \Rep(\bF[\bZ_2])$.

    \subsection{The formal ribbon extension $\H$}
    Ribbon Hopf algebras were introduced in \cite{ReshetikhinTuraev1990} to construct invariants of links. To prove the versatility of their construction, they showed that any quasitriangular Hopf algebra $H$ embeds into a ribbon Hopf algebra with the same $R$-matrix. In this paper, we term this ribbon Hopf algebra the \textit{formal ribbon extension of $H$}. As seen in Definition \ref{formal-ribbon}, the construction is quite straightforward. As we will see throughout the present article, it is also quite well-behaved.
    \begin{definition}\label{formal-ribbon}
        Let $H$ be a quasitriangular Hopf algebra with $R$-matrix $R\in H\otimes H$. Let $u = (m\circ(S\otimes \id))(R_{21})$ be the Drinfeld element. The \textit{formal ribbon extension} of $(H, R)$ is the ribbon Hopf algebra $\H$ defined by adjoining a formal ribbon element $\rib\in \H$ as follows; as a vector space $\H = H\oplus H\rib$, and $\rib\in Z(\H)$ is defined to satisfy the following identities:
        \begin{align*}
            \rib^2 &= uS(u), & \Delta(\rib) &= (R_{21}R)^{-1}(\rib\otimes\rib), & S(\rib) &= \rib, & \epsilon(\rib) &= 1.
        \end{align*}
    \end{definition} 
    
    We now review some elementary properties of this construction.
    \begin{proposition}
        Let $H$ be a finite-dimensional, quasitriangular Hopf algebra. Then, $H$ is unimodular if and only if $\H$ is unimodular.
    \end{proposition}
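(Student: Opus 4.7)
The plan is to describe left and right integrals in $\H$ directly, using the decomposition $\H = H \oplus H\rib$, and observe that both spaces are obtained from the corresponding spaces in $H$ via the \emph{same} map $h \mapsto h(1+\rib)$.

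First I would write an arbitrary element of $\H$ as $\Lambda = a + b\rib$ with $a,b \in H$ and split the integral condition into two parts. For $h \in H$, centrality of $\rib$ makes $\H = H \oplus H\rib$ a direct sum of $H$-bimodules, so $h\Lambda = \epsilon(h)\Lambda$ decouples into $ha = \epsilon(h)a$ and $hb = \epsilon(h)b$; thus $a,b$ are both left integrals of $H$. The remaining requirement $\rib\Lambda = \epsilon(\rib)\Lambda = \Lambda$, using $\rib^2 = uS(u)$ and centrality, reads
\begin{equation*}
a\rib + b\cdot uS(u) = a + b\rib,
\end{equation*}
so matching components in $H \oplus H\rib$ forces $a = b$ together with $b\cdot uS(u) = b$. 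The analogous computation on the right yields the same pair of conditions, only with $a,b$ now right integrals of $H$.

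Next I would verify that $b\cdot uS(u) = b$ is automatic in both cases. For a right integral it is immediate from $bh = \epsilon(h)b$ applied to $h = uS(u)$, using $\epsilon(uS(u)) = \epsilon(u)\epsilon(S(u)) = 1$. For a left integral, the space of left integrals is a right $H$-module on which $H$ acts through the modular character $\alpha \colon H \to \bF$; since $\alpha$ is an algebra map (equivalently, a grouplike in $H^\ast$) one has $\alpha \circ S = \alpha^{-1}$, hence $\alpha(uS(u)) = \alpha(u)\alpha(u)^{-1} = 1$. It follows that the integral spaces in $\H$ are exactly
\begin{equation*}
\{a(1+\rib) : a \in \text{left integrals of } H\} \quad \text{and} \quad \{a(1+\rib) : a \in \text{right integrals of } H\}.
\end{equation*}
Since $h \mapsto h(1+\rib)$ is an injective linear map $H \hookrightarrow \H$, these two one-dimensional subspaces of $\H$ coincide if and only if the left and right integrals of $H$ already coincide, i.e.\ if and only if $H$ is unimodular.

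The only nonroutine step is the identity $\alpha(uS(u)) = 1$, but once one recognises the modular function as a character of $H$, it falls straight out of $\alpha\circ S = \alpha^{-1}$. Everything else is bookkeeping in the direct sum $\H = H \oplus H\rib$ using centrality of $\rib$ and $\rib^2 = uS(u)$.
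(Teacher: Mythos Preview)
Your approach is essentially the same as the paper's: both arguments identify the left and right integral spaces of $\H$ with $(1+\rib)\,\mathrm{int}_L(H)$ and $(1+\rib)\,\mathrm{int}_R(H)$. The paper establishes the inclusion $(1+\rib)\,\mathrm{int}_L(H)\subseteq \mathrm{int}_L(\H)$ directly and then invokes one\nobreakdash-dimensionality, whereas you solve the integral equations in $\H$ outright. Either way, the conclusion is the same and the converse falls out of the injectivity of $h\mapsto h(1+\rib)$.

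There is one genuine slip. In verifying $b\cdot uS(u)=b$ for a \emph{left} integral $b$, you write $\alpha\circ S=\alpha^{-1}$ and then $\alpha(uS(u))=\alpha(u)\alpha(u)^{-1}$. The equality $\alpha\circ S=\alpha^{-1}$ is correct with $\alpha^{-1}$ the \emph{convolution} inverse in $H^*$, so it gives $\alpha(S(u))=\alpha^{-1}(u)$; but $\alpha^{-1}(u)=\alpha(u)^{-1}$ would require $u$ to be grouplike, which it is not. The fix is immediate and actually simpler than the modular\nobreakdash-character argument: $uS(u)$ is central (Proposition~\ref{Drinfeld-props}), so for a left integral $b$ one has $b\cdot uS(u)=uS(u)\cdot b=\epsilon(uS(u))\,b=b$. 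With this correction your proof goes through.
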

    \begin{proof}
        Let $\mathrm{int}_L(H)$ and $\mathrm{int}_R(H)$ be the space of left and right integrals of $H$ respectively. We claim $\mathrm{int}_L(\H) = (1+\rib)\mathrm{int}_L(H)$ and $\mathrm{int}_R(\H)=(1+\rib)\mathrm{int}_R(H)$.
        
        Let $\Lambda$ be a nonzero left integral of $H$. Then, $(1+\rib)\Lambda$ is also nonzero since $H\cap H\rib=\{0\}$. Moreover, since $1+\rib\in Z(\H)$, we have
        $$(a+b\rib)(1+\rib)\Lambda = \epsilon(a)(1+\rib)\Lambda+\epsilon(b)(\epsilon(\rib^2)+\rib)\Lambda = (\epsilon(a)+\epsilon(b))(1+\rib)\Lambda=\epsilon(a+b\rib)(1+\rib)\Lambda.$$
        Thus, $(1+\rib)\Lambda$ is a left integral. Since $\mathrm{int}_L(H)$ is one-dimensional for any finite-dimensional Hopf algebra $H$, it follows $\mathrm{int}_L(\H) = (1+\rib)\mathrm{int}_L(H)$. The proof of the statement for right integrals is similar. It follows that, if $H$ is unimodular, so is $\H$. By the linear independence of $H$ and $H\rib$, the converse is also true.
    \end{proof}

    While the formal ribbon extension preserves unimodularity, it cannot preserve factorizability, as a consequence of Proposition \ref{not-factorizable}. 
    \begin{proposition}\label{not-factorizable}
        Let $(H, R)$ be a quasitriangular Hopf algebra, and let $H\subsetneq H'$ be a Hopf algebra extension for which $(H', R)$ is quasitriangular. Then, $(H', R)$ is not factorizable.
    \end{proposition}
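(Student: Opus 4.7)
The plan is to observe that the Drinfeld map $f_Q$ for $(H', R)$ has image contained in the proper subspace $H \subsetneq H'$, and therefore cannot be a linear isomorphism.

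First I would unpack the definition: since $R \in H \otimes H$, also $R_{21} \in H \otimes H$, and hence $R_{21}R \in H \otimes H$. Writing $R_{21}R = \sum_i a_i \otimes b_i$ with $a_i, b_i \in H$, we get, for any $\beta \in (H')^*$,
\[
f_Q(\beta) = (\beta \otimes \id_{H'})(R_{21}R) = \sum_i \beta(a_i)\, b_i \in H.
\]
Thus the image $\mathrm{Im}(f_Q) \subseteq H$, which is a proper subspace of $H'$ by hypothesis. Consequently $f_Q$ fails to be surjective, so it is not a linear isomorphism, and $(H', R)$ is not factorizable.

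No step really constitutes an obstacle here: the argument is a one-line dimension count once one notices that the $R$-matrix has not changed under the extension. The only subtlety worth a brief comment in the writeup is the identification $R \in H \otimes H \subseteq H' \otimes H'$ implicit in saying that $(H, R)$ and $(H', R)$ share the same $R$-matrix, which is precisely the hypothesis of the proposition.
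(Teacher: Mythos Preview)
Your proof is correct and essentially the same as the paper's. The paper phrases it dually---noting that $f_Q$ vanishes on any functional that kills $H$, hence has nontrivial kernel---whereas you show the image lies in $H$; both are immediate consequences of $R_{21}R \in H \otimes H$.
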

    \begin{proof}
        Since $R\in H\otimes H$, the Drinfeld map is zero on any functional which vanishes on $H$.
    \end{proof}
    \begin{remark}\label{build-modular}
       While $\H$ cannot be factorizable, we may build factorizable, ribbon Hopf algebras using the formal ribbon extension and the Drinfeld double constructions. If $H$ is a finite-dimensional, quasitriangular, unimodular Hopf algebra, then $D\H$ is a ribbon Hopf algebra \cite{cohen2008characters}. In particular, if $H$ is any finite-dimensional Hopf algebra, then $D\widetilde{DH}$ is a ribbon Hopf algebra. This implies that, from any finite-dimensional Hopf algebra $H$, we may construct a (possibly non-semisimple) modular category $\Rep(D\widetilde{DH})$, which has dimension $4(\dim H)^4$.
    \end{remark} 

    \subsection{Decompositions of $\H$}
    We consider two decompositions of $\H$ in terms of $H$ and $\bF[\bZ_2]$ and the implications of these decompositions on their representation categories. First, we show that $\H$ factors as a tensor product of $H$ and $\bF[\bZ_2]$ if and only if $H$ is ribbon.  This tensor product decomposition is first noted in Reshetikhin and Turaev's original paper \cite[Rmk. 3.5]{ReshetikhinTuraev1990}, though this equivalence of conditions appears to be new. Sommerh\"auser also provides a decomposition for any Hopf algebra $H$, which says that $\H$ always factors as a cocycled crossed product of $H$ and $\bF[\bZ_2]$. It is worth noting that, even if $H$ is ribbon, the cocycle may be nontrivial. This is seen in Example \ref{decompositions-differ}.

    \begin{proposition}\label{ribbon-decomp}
        Let $(H, R, {\mathbf{v}})$ be a ribbon Hopf algebra and $(\H, R, \rib)$ be the formal ribbon extension of $(H, R)$. Then, $\H \cong H\otimes \bF[\bZ_2]$ as ribbon Hopf algebras. Moreover, if $H$ is a finite-dimensional, quasitriangular Hopf algebra and $\H\cong H\otimes \bF[\bZ_2]$ as quasitriangular Hopf algebras, then $H$ has a compatible ribbon element.
    \end{proposition}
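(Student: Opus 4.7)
The plan is to convert the ribbon element into a central grouplike element of order two in one direction, and to pull the ribbon element back across the isomorphism in the other.

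For the forward direction, I will set $g := \mathbf{v}^{-1}\rib \in \H$ and verify from the ribbon axioms that $g$ is central with $g^2 = 1$, $\Delta(g) = g\otimes g$, $S(g) = g$, and $\epsilon(g) = 1$; the essential observation is that $\Delta(\mathbf{v})$ and $\Delta(\rib)$ share the factor $(R_{21}R)^{-1}$, which cancels in the ratio, and that both squares equal $uS(u)$. Since $\H = H\oplus H\rib = H\oplus Hg$ as vector spaces, the map
\[
\phi\colon H\otimes\bF[\bZ_2]\longrightarrow\H, \qquad h\otimes g^k\longmapsto hg^k,
\]
is a linear isomorphism; centrality and grouplikeness of $g$ make $\phi$ a Hopf algebra morphism automatically, and one checks that it sends $R\otimes(1\otimes 1)\mapsto R$ and the ribbon element $\mathbf{v}\otimes g\mapsto \mathbf{v}g = \rib$, so $\phi$ is an isomorphism of ribbon Hopf algebras.

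For the converse, let $\psi\colon H\otimes\bF[\bZ_2]\xrightarrow{\sim}\H$ be an isomorphism of quasitriangular Hopf algebras. Because the ribbon axioms are phrased purely in terms of $S$, $\Delta$, $\epsilon$, and (via $u = S(R^{(2)})R^{(1)}$ and $R_{21}R$) the $R$-matrix, the pullback $\mathbf{w} := \psi^{-1}(\rib)$ is automatically a ribbon element of $(H\otimes\bF[\bZ_2],\, R\otimes 1\otimes 1)$. I will write $\mathbf{w} = \mathbf{v}_0\otimes 1 + \mathbf{v}_1\otimes g$ with $\mathbf{v}_0,\mathbf{v}_1\in H$; centrality of $\mathbf{w}$ forces $\mathbf{v}_0,\mathbf{v}_1\in Z(H)$, the antipode condition gives $S(\mathbf{v}_i)=\mathbf{v}_i$, and the counit condition gives $\epsilon(\mathbf{v}_0)+\epsilon(\mathbf{v}_1)=1$.

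The crucial step is to expand the coproduct identity
\[
\Delta(\mathbf{w}) = \bigl((R_{21}R)^{-1}\otimes 1\otimes 1\bigr)(\mathbf{w}\otimes\mathbf{w})
\]
in the basis $\{1\otimes 1,\ 1\otimes g,\ g\otimes 1,\ g\otimes g\}$ of $\bF[\bZ_2]\otimes\bF[\bZ_2]$. The left side contributes only in the $1\otimes 1$ and $g\otimes g$ slots (namely $\Delta(\mathbf{v}_0)$ and $\Delta(\mathbf{v}_1)$), whereas the right side produces cross terms $(R_{21}R)^{-1}(\mathbf{v}_0\otimes\mathbf{v}_1)$ and $(R_{21}R)^{-1}(\mathbf{v}_1\otimes\mathbf{v}_0)$ in the $1\otimes g$ and $g\otimes 1$ slots. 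Since $R_{21}R$ is invertible, matching forces $\mathbf{v}_0\otimes\mathbf{v}_1=0$, and hence $\mathbf{v}_0=0$ or $\mathbf{v}_1=0$. Letting $\mathbf{v}$ denote the nonzero component, the surviving equations $\Delta(\mathbf{v}) = (R_{21}R)^{-1}(\mathbf{v}\otimes\mathbf{v})$, $\mathbf{v}^2 = uS(u)$, $S(\mathbf{v})=\mathbf{v}$, $\epsilon(\mathbf{v})=1$, together with $\mathbf{v}\in Z(H)$, exhibit $\mathbf{v}$ as a compatible ribbon element of $(H,R)$. I expect the coefficient-matching step to be the main obstacle; once it is carried out, all other ribbon axioms for $\mathbf{v}$ are inherited immediately from those for $\mathbf{w}$.
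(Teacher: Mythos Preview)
Your forward direction is essentially the paper's argument run through the inverse isomorphism: the paper writes down $\phi:\H\to H\otimes\bF[\bZ_2]$, $a+b\rib\mapsto a\otimes 1+b\mathbf{v}\otimes g$, and checks the Hopf and ribbon axioms on $\rib$, while you package the same content by first observing that $g=\mathbf{v}^{-1}\rib$ is a central grouplike square root of $1$ and then writing down the obvious map $H\otimes\bF[\bZ_2]\to\H$. These are the same proof.

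Your converse, however, is genuinely different from the paper's and is worth noting. The paper argues categorically: a quasitriangular isomorphism $\H\cong H\otimes\bF[\bZ_2]$ transports the ribbon structure $\theta$ of $\Rep(\H)$ to $\Rep(H\otimes\bF[\bZ_2])$, and then one simply restricts $\theta$ to the full subcategory of modules $M\otimes\bF_{\mathrm{triv}}$, which is a copy of $\Rep(H)$; the ribbon element of $H$ is read off from $\theta_{H\otimes\bF_{\mathrm{triv}}}(1\otimes 1)=\mathbf{v}^{-1}\otimes 1$. Your argument instead pulls $\rib$ back to $\mathbf{w}=\mathbf{v}_0\otimes 1+\mathbf{v}_1\otimes g$ and matches coefficients of $\Delta(\mathbf{w})=(R_{21}R)^{-1}(\mathbf{w}\otimes\mathbf{w})$ in the $\bF[\bZ_2]^{\otimes 2}$-basis to force $\mathbf{v}_0\otimes\mathbf{v}_1=0$, hence $\mathbf{v}_0=0$ or $\mathbf{v}_1=0$; the surviving component then visibly satisfies all ribbon axioms in $H$ (invertibility follows from $\mathbf{v}^2=uS(u)$). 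Your route is more elementary, staying entirely inside the Hopf algebra and avoiding the dictionary between ribbon elements and ribbon twists; the paper's route is shorter and more conceptual once that dictionary is in hand, and makes transparent why the restriction to $H$ works without any case analysis. Both are correct.
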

    \begin{proof}
        Denote the generator of $\bZ_2$ by $g$.  Namely, we define $\phi:\H\to H\otimes \bF[\bZ_2]$ by
        $$\phi(a + b\rib) := a\otimes 1 + b{\mathbf{v}}\otimes g.$$
        This is clearly a linear isomorphism (since ${\mathbf{v}}$ is invertible) and preserves $1$. Under this isomorphism, $g$ can be identified with $1\otimes g = \phi({\mathbf{v}}^{-1}\rib)$. Moreover,
        $$\phi((a+b\rib)(a'+b'\rib)) = (aa'+bb'uS(u))\otimes 1 + (ab'+ba'){\mathbf{v}}\otimes g = \phi(a+b\rib)\phi(a'+b'\rib).$$
        As a result of $\phi$ being an algebra homomorphism, it suffices to check that $\phi$ preserves the counit, comultiplication, and antipode on just $\rib$:
        \begin{align*}
            (\phi\otimes \phi)(\Delta(\rib)) &= (\phi\otimes \phi)((R_{21}R_{12})^{-1}(\rib\otimes\rib)) \\
            &= ((R_{21}R_{12})^{-1})({\mathbf{v}}\otimes 1\otimes{\mathbf{v}}\otimes 1)(1\otimes g\otimes 1\otimes g)\\
            &= \Delta(\phi(\rib)),\\
            \phi(S(\rib)) &= \phi(\rib) = {\mathbf{v}}\otimes g = S({\mathbf{v}}\otimes g) = S(\phi(\rib)),\\
            \epsilon(\phi(\rib)) &= \epsilon(g)\epsilon({\mathbf{v}}) = 1 = \epsilon(\rib).
        \end{align*}
        Clearly, $(R^{(1)}\otimes 1)\otimes (R^{(2)}\otimes 1)$ is an $R$-matrix for $H\otimes \bF[\bZ_2]$, and ${\mathbf{v}}\otimes g$ is a ribbon element for $H\otimes \bF[\bZ_2]$. 

        For the converse, note that, if $\Rep(H\otimes \bF[\bZ_2])$ has a ribbon structure $\theta:\Id_{\Rep(H\otimes \bF[\bZ_2])}\to \Id_{\Rep(H\otimes \bF[\bZ_2])}$, then we could restrict $\theta$ to a ribbon structure on modules of the form $M\otimes \bF_{\text{triv}}$, which is precisely $\Rep(H)$. In particular, the ribbon element $\mathbf{v}\in H$ can be recovered from the equality $\theta_{H\otimes\bF_{\text{triv}}}(1\otimes 1) = \mathbf{v}^{-1}\otimes 1$.
    \end{proof}

    \begin{corollary}\label{ribbon-reps-factor}
        Let $(H, R, {\mathbf{v}})$ be a finite-dimensional ribbon Hopf algebra and $(\H, R, \rib)$ be the formal ribbon extension of $(H, R)$. Then, 
        $$\Rep(\H)\cong \Rep(H)\boxtimes \Vecm$$
        as ribbon finite tensor categories.
    \end{corollary}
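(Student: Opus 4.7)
The plan is to deduce this directly from Proposition \ref{ribbon-decomp}, so the corollary is essentially a translation of a Hopf algebra isomorphism into an equivalence of ribbon categories. First, I would invoke Proposition \ref{ribbon-decomp} to get a ribbon Hopf algebra isomorphism $\H \cong H \otimes \bF[\bZ_2]$. Since an isomorphism of ribbon Hopf algebras induces a pullback equivalence of representation categories that respects the monoidal, braided, and twist structures (the $R$-matrix and ribbon element are preserved under $\phi$ in the proof of Proposition \ref{ribbon-decomp}), we obtain a ribbon equivalence
\[
\Rep(\H) \;\cong\; \Rep(H \otimes \bF[\bZ_2])
\]
of finite tensor categories.

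Second, I would apply the standard fact that for finite-dimensional Hopf algebras $A, B$ over an algebraically closed field $\bF$, the canonical functor
\[
\Rep(A) \boxtimes \Rep(B) \;\longrightarrow\; \Rep(A \otimes B), \qquad M \boxtimes N \longmapsto M \otimes N,
\]
is an equivalence of finite tensor categories (the module actions on $M \otimes N$ are $(a \otimes b) \cdot (m \otimes n) = (a\cdot m)\otimes(b\cdot n)$). When $A$ and $B$ are both quasitriangular with $R$-matrices $R_A$ and $R_B$, the algebra $A \otimes B$ is quasitriangular with $R$-matrix $(R_A)_{13}(R_B)_{24}$, and the induced braiding on $\Rep(A \otimes B)$ is the Deligne tensor product of the braidings on $\Rep(A)$ and $\Rep(B)$. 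Similarly, if $A$ and $B$ are ribbon with ribbon elements $\mathbf{v}_A$ and $\mathbf{v}_B$, then $\mathbf{v}_A \otimes \mathbf{v}_B$ is a ribbon element for $A \otimes B$, and the induced twist on the Deligne tensor product is the Deligne tensor product of twists. Hence the equivalence above is one of ribbon finite tensor categories.

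Finally, I would cite the identification $\Rep(\bF[\bZ_2]) \cong \Vecm$ stated in the paper (with the trivial $R$-matrix and ribbon element $g$ giving the nontrivial twist $\theta_{V^-} = -\id_{V^-}$). Chaining the equivalences gives
\[
\Rep(\H) \;\cong\; \Rep(H \otimes \bF[\bZ_2]) \;\cong\; \Rep(H) \boxtimes \Rep(\bF[\bZ_2]) \;\cong\; \Rep(H) \boxtimes \Vecm,
\]
as ribbon finite tensor categories. There is no real obstacle here; the only thing to be careful about is verifying that the ribbon structure really is preserved at each step, which amounts to checking that both $R$-matrices and both ribbon elements factor correctly under the isomorphism $\phi$ from Proposition \ref{ribbon-decomp}, and this was already essentially done in that proof.
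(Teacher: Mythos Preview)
Your proposal is correct and matches the paper's intended argument: the corollary is stated immediately after Proposition \ref{ribbon-decomp} with no separate proof, so the paper is implicitly relying on exactly the chain of equivalences you spell out (ribbon Hopf isomorphism $\Rightarrow$ ribbon category equivalence, $\Rep(A\otimes B)\cong\Rep(A)\boxtimes\Rep(B)$, and $\Rep(\bF[\bZ_2])\cong\Vecm$). You have simply made the implicit steps explicit.
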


    Corollary \ref{ribbon-reps-factor} is reminiscent of the property that, for a finite-dimensional factorizable Hopf algebra $H$, representations of the Drinfeld double $DH$ factor as a Deligne product in terms of the representations of $H$ (see \cite{shimizu2019non}):
    $$\Rep(DH)\cong \Rep(H)\boxtimes\Rep(H)^{\mathrm{op}}.$$

    The notion of a cocycled crossed product algebra comes from \cite{BlattnerCohen1986}. Conditions under which this construction yields a Hopf algebra were studied in \cite{Agore2013}. This construction can be used to describe formal ribbon extensions in general \cite{Andruskiewitsch2014hopftensor}.
    \begin{definition}\label{weak}
        Let $K$ be a Hopf algebra and $H$ a unital associative algebra. A \textit{weak action of $K$ on $H$} is a linear map $\cdot: K\otimes H\to H$ such that, for all $h, h_1, h_2\in H$ and $k, k_1, k_2\in K$,
        \begin{enumerate}
            \item $k\cdot (h_1 h_2) = (k^{(1)}\cdot h_1)(k^{(2)}\cdot h_2)$,
            \item $k\cdot 1 = \epsilon(k)1$,
            \item $1\cdot h = h$.
        \end{enumerate}
        A weak action is \textit{symmetric} if, for all $h\in H$ and $k\in K$, 
        $$k^{(1)}\otimes k^{(2)}\cdot h = k^{(2)}\otimes k^{(1)}\cdot h.$$ 
    \end{definition}
    \begin{definition}\label{cocycle}
        Suppose $K$ acts weakly on $H$ via $\cdot:K\otimes H\to H$. A linear map $\sigma:K\otimes K\to H$
        \begin{enumerate}
            \item is \textit{normal} if, for all $k\in K$, $\sigma(k\otimes 1) = \sigma(1\otimes k) = \epsilon(k)1$;
            \item is a \textit{cocycle} if, for all $k_1, k_2, k_3\in K$, 
            $$k_1^{(1)}\cdot \sigma(k_2^{(1)}\otimes k_3^{(1)})\sigma(k_1^{(2)}\otimes (k_2^{(2)}k_3^{(2)})) = \sigma(k_1^{(1)}\otimes k_2^{(1)})\sigma((k_1^{(2)}k_2^{(2)})\otimes k_3);$$
            \item satisfies the \textit{twisted module condition} if, for all $k_1, k_2\in K$ and $h\in H$,
            $$k_1^{(1)}\cdot (k_2^{(1)}\cdot h)\sigma(k_1^{(2)}\otimes k_2^{(2)}) = \sigma(k_1^{(1)}\otimes k_2^{(1)})((k_1^{(2)}k_2^{(2)})\cdot h),$$
            \item is \textit{symmetric} if, for all $k_1, k_2\in K$,
            $$k_1^{(1)}k_2^{(1)}\otimes \sigma(k_1^{(2)}\otimes k_2^{(2)})=k_1^{(2)}k_2^{(2)}\otimes \sigma(k_1^{(1)}\otimes k_2^{(1)}).$$
        \end{enumerate}
    \end{definition}
    \begin{lemma}[{\cite[Lem. 4.4--5]{BlattnerCohen1986}}]\label{crossed-algebra}
        Suppose $\cdot: K\otimes H\to H$ is a weak action of a Hopf algebra $K$ on a unital associative algebra $H$. Let $\sigma:K\otimes K\to H$ be a linear map and denote by $H\#_\sigma K$ the vector space $H\otimes K$ with simple tensors denoted $h\# k$. Define a multiplication on $H\#_\sigma K$ as follows: for any $h_1, h_2\in H$ and $k_1, k_2\in K$, 
        $$(h_1\# k_1)(h_2\# k_2) = [h_1 (k_1^{(1)}\cdot h_2)\sigma(k_1^{(2)}\otimes k_2^{(1)})]\# [k_1^{(3)}k_2^{(2)}].$$
        Then, this multiplication defines a unital algebra structure on $H\#_\sigma K$ if and only if $\sigma$ is a normal cocycle with the twisted module condition.
    \end{lemma}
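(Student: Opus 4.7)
My plan is a direct Sweedler-notation verification of the unit and associativity axioms for the multiplication on $H \#_\sigma K$, as is standard for crossed-product constructions.

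First, I would handle the unit condition. Expanding $(1 \# 1)(h \# k)$ and $(h \# k)(1 \# 1)$ and using the weak-action properties $k \cdot 1 = \epsilon(k) 1$ and $1 \cdot h = h$ (together with coassociativity on the $K$-factor), these reduce to $h \sigma(1 \otimes k^{(1)}) \# k^{(2)}$ and $h \sigma(k^{(1)} \otimes 1) \# k^{(2)}$, respectively. The ``if'' direction of the normality requirement is immediate; for the ``only if'' direction, applying $\id_H \otimes \epsilon_K$ to each identity $(1\#1)(h\#k)=h\#k$ and $(h\#k)(1\#1)=h\#k$ isolates the identities $\sigma(1 \otimes k) = \sigma(k \otimes 1) = \epsilon(k) 1$.

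Next, I would verify associativity by expanding both $((h_1 \# k_1)(h_2 \# k_2))(h_3 \# k_3)$ and $(h_1 \# k_1)((h_2 \# k_2)(h_3 \# k_3))$. On the right-hand product I would use the weak-action axiom $k \cdot (xy) = (k^{(1)} \cdot x)(k^{(2)} \cdot y)$ to distribute $k_1^{(1)}$ across the three factors $h_2$, $k_2^{(1)} \cdot h_3$, and $\sigma(k_2^{(2)} \otimes k_3^{(1)})$, and then re-index via coassociativity. Comparing the resulting expressions, the discrepancy between the two parenthesizations is of two distinct types: a term in which a central $\sigma(k_1 \otimes k_2)$ must be commuted past a factor of the form $(k_1 k_2 \cdot h_3)$, which the twisted module condition supplies, and a term involving nested $\sigma$'s which the cocycle condition rearranges. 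Applying both conditions in sequence produces equality, giving the ``if'' direction.

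For the ``only if'' direction, I would extract each axiom by specialization. Setting $h_1 = h_2 = 1$ and $k_3 = 1$ collapses the outer $\sigma$-factors on each side (using normality, already established) and, after cancellation, reduces associativity to precisely the twisted module identity applied to $h_3$. Setting $h_1 = h_2 = h_3 = 1$ collapses all module contributions via $k \cdot 1 = \epsilon(k)1$, and then applying $\id_H \otimes \epsilon_K$ to the $K$-factor to absorb the trailing Sweedler components yields exactly the cocycle condition. The hard part will be purely bookkeeping: $k_1, k_2, k_3$ must be split into as many as five, four, and three Sweedler components respectively, and careful invocations of coassociativity are needed to bring the two sides into a common indexing before termwise comparison. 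Once this alignment is done the manipulations are mechanical, with the twisted module condition governing the $h_3$-content and the cocycle condition governing the $\sigma$-content of the resulting product.
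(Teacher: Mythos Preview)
The paper does not supply its own proof of this lemma; it simply cites \cite[Lem.~4.4--5]{BlattnerCohen1986} and moves on. Your proposal is the standard direct Sweedler-notation verification, which is essentially the argument in the cited Blattner--Cohen reference, and it is correct as outlined: normality is equivalent to $1\#1$ being a two-sided unit, and the specializations $h_1=h_2=1,\,k_3=1$ and $h_1=h_2=h_3=1$ (followed by $\id_H\otimes\epsilon_K$) recover the twisted module and cocycle conditions from associativity.
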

    In the case where the cocycle is trivial, (i.e. $\sigma(k_1\otimes k_2) = \epsilon(k_1 k_2)1_H$) and the weak action is an action, this agrees with the more classical notion of a \textit{crossed product algebra} $H\rtimes K=H\#_\sigma K$.
    \begin{lemma}[{\cite[Ex. 2.5(2)]{Agore2011extending}}]\label{crossed-hopf}
        Suppose $\cdot: K\otimes H\to H$ is a weak action of a Hopf algebra $K$ on another Hopf algebra $H$. Let $\sigma:K\otimes K\to H$ be a normal cocycle with the twisted module condition. Then, $H\#_\sigma K$ is a Hopf algebra if and only if $\cdot$ and $\sigma$ are both coalgebra homomorphisms and symmetric. The coalgebra structure is given by the tensor product of the coalgebras $H$ and $K$ and the antipode is given by 
        $$S(h\# k) = [S_H(\sigma(S_K(k^{(2)})\otimes k^{(3)}))\# S_K(k^{(1)})][S_H(h)\# 1].$$
    \end{lemma}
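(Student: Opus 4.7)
The plan is to work from what has already been established: by Lemma \ref{crossed-algebra}, $H\#_\sigma K$ is a unital associative algebra, and the coalgebra candidate is the tensor product coalgebra $\Delta(h\#k)=(h^{(1)}\#k^{(1)})\otimes(h^{(2)}\#k^{(2)})$ with counit $\epsilon(h\#k)=\epsilon_H(h)\epsilon_K(k)$. The forward direction is therefore a bialgebra compatibility check plus a verification that the proposed antipode formula satisfies the convolution identity; the reverse direction amounts to specializing the bialgebra axiom to extract the coalgebra-morphism and symmetry hypotheses.

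For the bialgebra check, I would compute $\Delta\bigl((h_1\# k_1)(h_2\# k_2)\bigr)$ and $\Delta(h_1\#k_1)\Delta(h_2\#k_2)$ separately and match them. On the left, after inserting the product formula from Lemma \ref{crossed-algebra}, I expand $\Delta_H$ using that $H$ is a Hopf algebra (so $\Delta_H$ is multiplicative), and then apply the assumption that $\cdot$ and $\sigma$ are coalgebra homomorphisms:
\begin{align*}
\Delta_H(k\cdot h)&=(k^{(1)}\cdot h^{(1)})\otimes(k^{(2)}\cdot h^{(2)}),\\
\Delta_H(\sigma(k\otimes k'))&=\sigma(k^{(1)}\otimes k'^{(1)})\otimes\sigma(k^{(2)}\otimes k'^{(2)}).
\end{align*}
On the right, I expand each factor with the product formula and then distribute the Sweedler indices. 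The two expressions will agree formally except that the indices labeling the slots of $\cdot$ and $\sigma$ appear in opposite orders in the two tensor factors; the \emph{symmetry} hypotheses for $\cdot$ and $\sigma$ (Definitions \ref{weak} and \ref{cocycle}) are exactly what is needed to permute those Sweedler indices so that the two sides coincide. Counit compatibility is immediate from normality of $\sigma$ together with axioms (2)–(3) of the weak action.

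For the antipode, I would verify the convolution identity $S\ast\id=\id\ast S=\eta\epsilon$ directly. Writing $S(h\#k)=\bigl[S_H(\sigma(S_K(k^{(2)})\otimes k^{(3)}))\# S_K(k^{(1)})\bigr]\bigl[S_H(h)\#1\bigr]$, one computes the convolution product using the algebra multiplication in $H\#_\sigma K$. The cocycle condition collapses the pair of $\sigma$-factors that appears, the twisted module condition commutes $S_K(k^{(i)})\cdot(-)$ past $S_H(h)$, and finally the antipode axioms for $H$ and $K$ together with normality of $\sigma$ yield $\epsilon_H(h)\epsilon_K(k)\cdot 1\#1$. The computation for $\id\ast S$ is dual.

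For the converse, I would specialize $\Delta\bigl((h_1\#k_1)(h_2\#k_2)\bigr)=\Delta(h_1\#k_1)\Delta(h_2\#k_2)$ to carefully chosen inputs. Taking $h_1=1,k_2=1$ isolates a relation that, after applying $\id\otimes\epsilon$ to one tensor factor, forces $\Delta_H(k\cdot h)=(k^{(1)}\cdot h^{(1)})\otimes(k^{(2)}\cdot h^{(2)})$, so $\cdot$ is a coalgebra map; taking $h_1=h_2=1$ similarly forces $\sigma$ to be a coalgebra map. The symmetry conditions emerge by comparing the two sides modulo the coalgebra-morphism identities and then projecting onto $H\otimes K\otimes H\otimes K$ against a basis to read off the transposition of Sweedler indices. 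The main obstacle throughout is purely notational: the Sweedler bookkeeping is intricate (both $h_i$ and $k_i$ appear with up to three Sweedler components after one application of $\Delta$ and an internal multiplication), and the whole point of the four hypotheses is to make those indices align. A clean way to manage this is to organize the check as a string-diagram computation in the braided monoidal sense, which makes the role of each hypothesis visually transparent and reduces the argument to recognizing a handful of familiar triangle and pentagon identities.
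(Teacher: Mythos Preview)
The paper does not prove this lemma: it is stated with a citation to \cite[Ex.~2.5(2)]{Agore2011extending} and no argument is given in the text. There is therefore no paper proof to compare your proposal against.

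As a standalone sketch your outline is the standard one and is in the right spirit: verify that the tensor-product coalgebra structure is multiplicative with respect to the crossed-product multiplication (this is where the coalgebra-morphism and symmetry hypotheses on $\cdot$ and $\sigma$ enter), check the antipode identity by direct convolution, and for the converse specialize $h_i,k_i$ to isolate each condition. Two cautions. First, your antipode paragraph is too schematic: the convolution $S*\id$ involves a product of two nontrivial $H\#_\sigma K$ elements, so the cocycle and twisted-module conditions must be invoked in a specific order, and one also needs the symmetry of $\sigma$ (not just the cocycle condition) to get the Sweedler indices to line up before the $H$- and $K$-antipode axioms can collapse everything; as written it is not clear you have accounted for that. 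Second, in the converse, extracting the \emph{symmetry} of $\cdot$ and $\sigma$ from multiplicativity of $\Delta$ requires more than ``projecting against a basis'': one has to compare the $K$-tensorands as well as the $H$-tensorands, and the cleanest way is to apply $\epsilon_H\otimes\id_K$ or $\id_H\otimes\epsilon_K$ to the appropriate slots rather than argue by basis. None of this is a fatal gap, but a referee would want those two steps written out in full.
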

    
    Note that $H\cong H\#_\sigma 1$ always appears as a normal subalgebra of $H\#_\sigma K$. However, when the cocycle $\sigma$ is nontrivial, $K$ need not be a subalgebra of $H\#_\sigma K$. However, these algebras fit into a cleft exact sequence of Hopf algebras as $H\incl H\#_\sigma K\twoheadrightarrow K$. In fact, all cleft exact sequences arise this way. With this notion of cocycled crossed product, Sommerh\"auser provides a general decomposition for $\H$. However, to the author's knowledge, a proof has not been publicly shared in the literature. We present a proof here for completeness. 
    \begin{theorem}[{\cite[Rmk. 3.14 due to Sommerh\"auser]{Andruskiewitsch2014hopftensor}}]
    \label{sommer}
        Let $H$ be a finite-dimensional quasitriangular Hopf algebra and let $\bZ_2=\langle g\rangle$. Then, $\H\cong H\#_\sigma \bF[\bZ_2]$ as quasitriangular Hopf algebras, where the weak action is generated by $g\cdot h = S^2(h)$ and the cocycle $\sigma$ is generated by $\sigma(g\otimes g) = uS(u)^{-1}$.
    \end{theorem}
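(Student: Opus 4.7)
The plan is to construct an explicit isomorphism $\phi: H\#_\sigma \bF[\bZ_2]\to \H$ of Hopf algebras and verify it preserves the $R$-matrix. A first (easy) step is to check the hypotheses of Lemma \ref{crossed-hopf}, which reduce to $S^2$ being a Hopf algebra automorphism of $H$ and $uS(u)^{-1}$ being central and grouplike (Proposition \ref{Drinfeld-props}); this ensures $H\#_\sigma\bF[\bZ_2]$ is a Hopf algebra and that its coalgebra structure is the tensor product of $H$ and $\bF[\bZ_2]$.

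The core of the proof is to select the element $y\in\H$ to which $1\#g$ should map. Because $1\#g$ is grouplike with $(1\#g)^2 = uS(u)^{-1}$ and $(1\#g)(h\#1)(1\#g)^{-1} = S^2(h)\#1$, the image $y$ must be grouplike in $\H$, must satisfy $y^2 = uS(u)^{-1}$, and must conjugate $H$ by $S^2$. I would set $y:=S(u)^{-1}\rib$ and define $\phi$ by $\phi(h\#1)=h$ and $\phi(h\#g)=hy$.

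Verifying the three defining properties of $y$ is the heart of the proof. That $y^2 = uS(u)^{-1}$ follows from centrality of $\rib$, the identity $\rib^2 = uS(u)$, and $S^2(u) = u$. That $yhy^{-1} = S^2(h)$ reduces, by centrality of $\rib$, to $S(u)^{-1}hS(u) = S^2(h)$, which follows from $S^2(h) = uhu^{-1}$ together with centrality of $uS(u) = S(u)u$. That $y$ is grouplike follows from the factorization $y = (uS(u)^{-1})(u^{-1}\rib)$: the first factor is a central grouplike by Proposition \ref{Drinfeld-props}, and the second factor is grouplike because $\Delta(u^{-1}) = (u^{-1}\otimes u^{-1})(R_{21}R)$ (the inverse of the standard identity $\Delta(u) = (R_{21}R)^{-1}(u\otimes u)$) cancels the $(R_{21}R)^{-1}$ factor appearing in $\Delta(\rib)$.

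With these properties in hand, the remainder is routine. Bijectivity of $\phi$ is immediate since $S(u)$ is invertible; multiplicativity reduces to the identity $\phi(h_1 S^2(h_2) uS(u)^{-1}\#1) = h_1(yh_2y^{-1})y^2$, which holds by the three properties of $y$; compatibility with $\Delta$ follows from grouplikeness of $y$ combined with the tensor-product coalgebra structure on the crossed product; the antipode is then determined; and preservation of $R$ is automatic, since $R\in H\otimes H$ sits inside both Hopf algebras via the embedding $h\mapsto h\#1$, on which $\phi$ restricts to the identity. The main obstacle is guessing the correct $y$: naive candidates like $\rib$, $u\rib$, or $u^{-1}\rib$ each fail at least one of the three constraints, and only the twist by $S(u)^{-1}$ simultaneously yields the correct square, the correct conjugation action, and a grouplike element of $\H$.
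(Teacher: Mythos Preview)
Your proof is correct and essentially the same as the paper's. The paper constructs the isomorphism in the reverse direction, $\phi:\H\to H\#_\sigma\bF[\bZ_2]$ with $\phi(\rib^{-1})=u^{-1}\#g$; since $u\rib^{-1}=S(u)^{-1}\rib$ (using $\rib^2=uS(u)$ and centrality of $\rib$), your element $y$ is exactly the image of $1\#g$ under the paper's $\phi^{-1}$, and the verifications match.
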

    \begin{proof}
        It is clear that $\cdot$ is a weak action and a coalgebra homomorphism since $S^2$ is a bialgebra homomorphism. First, we verify $\sigma$ satisfy all the conditions of Definition \ref{cocycle}. Note that normality is true by definition of $\sigma$. When verifying the cocycle and twisted module conditions, we need only check the case where $k_1=k_2=k_3=g$: for any $h\in H$,
        \begin{align*}
            g\cdot (g\cdot h)\sigma(g\otimes g)=S^4(h)uS(u)^{-1} &= uS(u)^{-1}h = \sigma(g\otimes g)(g^2\cdot h),\\
            g\cdot \sigma(g\cdot g)\sigma(g\otimes g^2) = S^2(uS(u)^{-1}) &= uS(u)^{-1} = \sigma(g\otimes g)\sigma(g^2\otimes g).
        \end{align*}
        Moreover, $\sigma$ is indeed a coalgebra homomorphism because $uS(u)^{-1}$ is grouplike. Finally, $\cdot$ and $\sigma$ are symmetric because $\bF[\bZ_2]$ is cocommutative. By Lemma \ref{crossed-hopf}, $H\#_\sigma \bF[\bZ_2]$ is a well-defined Hopf algebra.

        Observe that $\H=H\oplus H\rib^{-1}$. Let $\phi:\H\to H\#_\sigma \bF[\bZ_2]$ be given by
        $$\phi(a + b\rib^{-1}) := a\# 1 + bu^{-1}\# g$$
        for $a, b\in H$. This is clearly a linear isomorphism. It is easy to verify that this is a coalgebra homomorphism and preserves $1$. Moreover,
        \begin{align*}
            \phi(\rib^{-1})\phi(\rib^{-1}) &= (u^{-1}\# g)(u^{-1}\# g) = [u^{-1}(g\cdot u^{-1})\sigma(g\otimes g)]\# g^2 = (uS(u))^{-1}\# 1 = \phi(\rib^{-2}),\\
            S(\phi(\rib^{-1})) &= S(u^{-1}\# g) = [S_H(\sigma(S_{\bF[\bZ_2]}(g)\otimes g))\# S_{\bF[\bZ_2]}(g)][S(u^{-1})\# 1]\\
            &= [S_H(u)u^{-1}(g\cdot S_H(u)^{-1})]\# g = u^{-1}\# g = \phi(S(\rib^{-1})).
        \end{align*}
        From here, it is easy to see that that $\phi$ is a Hopf algebra isomorphism. Clearly, $(R^{(1)}\# 1)\otimes (R^{(2)}\# 1)=(\phi\otimes\phi)(R)$ is an $R$-matrix for $H\#_\sigma \bF[\bZ_2]$. The result follows.
    \end{proof}

    We now consider an example which shows that the decompositions given in Theorems \ref{ribbon-decomp} and \ref{sommer} are truly distinct.
    \begin{example}\label{decompositions-differ} 
        Consider the 4-dimensional Sweedler's Hopf algebra $H_4$ generated by $K, \xi$ subject to
        \begin{align*}
            K^2 &= 1, & \xi^2 &= 0, & K\xi&=-\xi K
        \end{align*}
        with Hopf algebra structure described by
        \begin{align*}    
            \Delta(K) &= K\otimes K, & \epsilon(K) &= 1, & S(K) &= K,\\
            \Delta(\xi) &= K\otimes \xi + \xi\otimes 1, & \epsilon(\xi) &= 0, & S(\xi) &= -K\xi.
        \end{align*}
        By \cite{panaite1999quasitriangular}, $H_4$ is ribbon with
        \begin{align*}
            R &= (1\otimes 1 + K\otimes 1 + 1\otimes K - K\otimes K)(1\otimes 1 + \xi\otimes K\xi)
        \end{align*}
        and $\mathbf{v} = 1$. $H_4$ is the first Nichols Hopf algebra $\mcK_1$, which are further discussed in Section \ref{DKn-section}.
        
        Theorem \ref{ribbon-decomp} implies $\tilde H_4\cong H_4\otimes \bF[\bZ_2]$. However, $S^2(\xi)=-\xi$, so, in Sommerh\"auser's decomposition $\tilde H_4\cong H_4\#_\sigma \bF[\bZ_2]$, the weak action of $\bF[\bZ_2]$ on $H_4$ is nontrivial (meanwhile, the cocycle is trivial in this case). In particular, $(\xi\# 1)(1\# g) = -(1\#g)(\xi\#1)$ in $H_4\#_\sigma \bF[\bZ_2]$. Thus, Sommerh\"auser's decomposition does not reduce to the tensor product decomposition of Theorem \ref{ribbon-decomp}.
    \end{example}
    
    \begin{corollary}\label{exact}
        Let $H$ be a finite-dimensional quasitriangular Hopf algebra. Then, the sequence $H\to \H\to \bF[\bZ_2]$ is a strictly exact sequence of quasitriangular Hopf algebras. In particular, there is an exact sequence of braided finite tensor categories: 
            $$\Vec_{\bZ_2}\to \Rep(\H)\to \Rep(H).$$
    \end{corollary}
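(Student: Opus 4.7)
The plan is to produce the two maps in the sequence explicitly, verify the four conditions for a strictly exact sequence of Hopf algebras (in the sense of Andruskiewitsch–Devoto), and then invoke the standard correspondence between short exact sequences of finite-dimensional Hopf algebras and exact sequences of their representation categories.

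First, I would take $\iota:H\to\H$ to be the tautological inclusion arising from the decomposition $\H=H\oplus H\rib$, which is manifestly a Hopf algebra monomorphism, and define the projection $\pi:\H\to\bF[\bZ_2]$ by $\pi(a+b\rib)=\epsilon(a)+\epsilon(b)g$. The nontrivial task is to verify $\pi$ is a Hopf algebra map. Multiplicativity follows from centrality of $\rib$ together with $\epsilon(uS(u))=1=g^{2}$; coalgebra compatibility uses $(\epsilon\otimes\epsilon)((R_{21}R)^{-1})=1$; the counit and antipode identities follow immediately from $\epsilon(\rib)=1$, $S(\rib)=\rib$, and $S_{\bF[\bZ_2]}(g)=g$. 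Moreover, $\iota$ and $\pi$ are both quasitriangular, as $\iota_{*}R=R$ and $(\pi\otimes\pi)R=1\otimes 1$ is the trivial $R$-matrix of $\bF[\bZ_2]$ (by $(\epsilon\otimes\id)R=(\id\otimes\epsilon)R=1$).

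Next I would check the four strict-exactness conditions. Injectivity of $\iota$ and surjectivity of $\pi$ are automatic. For the equality $\ker\pi=\H\cdot \iota(H^{+})$, an element $a+b\rib$ lies in $\ker\pi$ iff $\epsilon(a)=\epsilon(b)=0$, i.e., $a,b\in H^{+}$; centrality of $\rib$ then yields $\H\cdot H^{+}=H^{+}\oplus H^{+}\rib$, matching the kernel. For $\iota(H)=\H^{\mathrm{co}\,\pi}$, the inclusion $\subseteq$ is immediate from $(\id\otimes\pi)\Delta(h)=(\id\otimes\epsilon)\Delta(h)=h\otimes 1$ for $h\in H$; the reverse inclusion follows from a Nichols–Zoeller dimension count, $\dim\H^{\mathrm{co}\,\pi}=\dim\H/\dim\bF[\bZ_2]=\dim H$. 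Alternatively, one may verify directly that $(\id\otimes\pi)\Delta(h'\rib)=h'\rib\otimes g$ using $(\id\otimes\epsilon)((R_{21}R)^{-1})=1$, which forces $h'=0$ for any coinvariant lying in the $H\rib$ summand.

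Finally, I would pass from the exact sequence of Hopf algebras to the exact sequence of braided finite tensor categories via the standard correspondence (cf.\ Bruguières–Natale): the relevant tensor functors are pullback of representations along $\pi$ (yielding $\Vec_{\bZ_2}\to\Rep(\H)$) and restriction along $\iota$ (yielding $\Rep(\H)\to\Rep(H)$), and the $R$-matrix compatibilities checked above upgrade exactness of the underlying sequence to exactness of the braided tensor sequence. The only real obstacle is the coinvariants computation in condition (d); once handled by either of the two methods sketched above, the remainder of the argument is a routine unpacking of the formal ribbon extension.
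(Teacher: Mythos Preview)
Your proof is correct, but it takes a different route from the paper's. The paper leverages Theorem~\ref{sommer} (Sommerh\"auser's cocycled crossed product decomposition $\H\cong H\#_\sigma\bF[\bZ_2]$) together with the standard correspondence between cocycled crossed products and cleft extensions: this immediately yields a cleft exact sequence $H\to\H\to\bF[\bZ_2]$, which in particular is strictly exact, and then Brugui\`eres--Natale is invoked exactly as you do. Your approach bypasses the crossed-product machinery entirely and verifies the Andruskiewitsch--Devoto axioms by hand from the defining relations of $\rib$. The paper's argument is shorter and more conceptual---it exhibits strict exactness as an immediate dividend of the structural theorem just proved, and actually gives the stronger conclusion that the sequence is cleft---whereas your argument is more elementary and self-contained, requiring neither the crossed-product formalism nor the cleft/crossed correspondence. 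Both land on the same Brugui\`eres--Natale endpoint for the categorical statement.
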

    \begin{proof}
        By the correspondence between cleft extensions and cocycled crossed products (and as noted in \cite[Rmk. 3.14]{Andruskiewitsch2014hopftensor}), there is a cleft exact sequence $H\incl \H\twoheadrightarrow \bF[\bZ_2]$. This sequence is, in particular, is strictly exact. By \cite[Prop. 2.9]{BruguieresNatale11}, this gives rise to an exact sequence of tensor categories:
            $$\Rep(\bF[\bZ_2])\cong\Vec_{\bZ_2}\to \Rep(\H)\to \Rep(H).$$
        Since the maps $H\incl \H$ and $\H\twoheadrightarrow \bF[\bZ_2]$ preserve the $R$-matrix, the functors are braided.
    \end{proof}

    \section{Representations of $\H$}\label{rep-section}
    Throughout this section, $H$ is a finite-dimensional quasitriangular Hopf algebra over an algebraically closed field $\bF$ of characteristic 0, and $M$ is a finite-dimensional $H$-module. For an $\H$-module $N$, we denote the $H$-module obtained by restricting the action on $N$ to $H$ by $N|_H$.
    
    \subsection{Upgrading $H$-actions to $\H$-actions}\label{extension-props}
    By dominance, Corollary \ref{exact} shows that every $H$-module arises as an $H$-submodule of some $\H$-module (with its action restricted to $H$). Theorem \ref{action-extends} shows that, in fact, every $H$-module has an $\H$-action. Moreover, we obtain a characterization of simple and projective $\H$-modules in terms of simple and projective $H$-modules in Proposition \ref{simples-double} and Theorem \ref{indec-projectives-restrict}. Modules of this form are of particular interest in finite tensor categories, as the finiteness property can be characterized in terms of such objects \cite{etingof2004finite}.

    Subsection \ref{extension-props} is much more general than presented. Indeed, the coalgebra and antipode structures are not relevant here. All results hold identically for extensions of algebras $\mcA\subset \mcA[a]$ for which $a$ commutes with $\mcA$ and $a^2=b$ for some invertible $b\in \mcA$. This subsection may be further generalized to the case where $a^n=b$. As our primary interest is the formal ribbon extension, we phrase all results using the special case of $H\subset \H$, where $a = \rib$ and $b=uS(u)$.
    
    Lemma \ref{sqrt-uSu} is a technical result that we employ to show that $H$-modules can always be made into $\H$-modules and that there is a precise description in the indecomposable case. The proof presented here applies the holomorphic functional calculus to prove the result for $\bC$ and then completeness of the theory of algebraically closed fields of characteristic 0 to extend it to more general fields. The intersection in the applicability of these two results seems quite small.  Moreover, there is a more direct proof of Lemma \ref{sqrt-uSu} which relies on neither of these techniques, described in Remark \ref{std-proof}.
  
    \begin{lemma}\label{sqrt-uSu}
        Suppose $V$ is a finite-dimensional vector space over an algebraically closed field $\bF$ of characteristic 0 and that $B\in \Aut_{\bF}(V)$. Then, $B$ has a square root $A\in\Aut_{\bF}(V)$ with the following properties:
        \begin{enumerate}
            \item $A$ commutes with the centralizer of $B$ in $\End_{\bF}(V)$;
            \item if $\lambda$ is an eigenvalue of $A$, then $-\lambda$ is not an eigenvalue of $A$.
        \end{enumerate}
    \end{lemma}
    \begin{proof}
        We can formulate Lemma \ref{sqrt-uSu} in terms of logical sentences in the language of rings. For each $n\geq 1$, let $\phi_n$ be the sentence
        \begin{align*}
            \phi_n := ``&\forall b_{11},\dots, b_{nn}, (\det[b_{ij}]\neq 0\to\exists a_{11},\dots, a_{nn}, (\det[a_{ij}]\neq 0\wedge [a_{ij}]^2 = [b_{ij}] \\&\wedge\forall c_{11},\dots, c_{nn}, ([c_{ij}][b_{ij}]=[b_{ij}][c_{ij}]\to [c_{ij}][a_{ij}]=[a_{ij}][c_{ij}])\wedge \\
            &\wedge \forall \lambda (\det([a_{ij}] - \lambda I)=0\to \det([a_{ij}] + \lambda I)\neq 0)))".
        \end{align*}
        where $[a_{ij}]$ denotes the matrix whose entries are $a_{ij}, i,j=1,\dots, n$. That is, $\phi_n$ is the sentence ``for every $n\times n$ matrix $B$ with nonzero determinant, there is an $n\times n$ matrix $A$ which has nonzero determinant, squares to $B$, for every $n\times n$ matrix $C$, if $B$ commutes with $C$, then $A$ commutes with $C$, and, for any $\lambda$, if $\det(A-\lambda I)=0$, then $\det(A+\lambda I)\neq 0$.'' Observe that Lemma \ref{sqrt-uSu} is equivalent to $\phi_n$ being true for all $n$. By completeness of the theory of algebraically closed fields of characteristic 0, $\phi_n$ is true for $\bC$ if and only if $\phi_n$ is true for all algebraically closed fields of characteristic 0. Thus, it suffices to prove the result for $\bC$ only.

        Since $B$ has a finite spectrum $\sigma(B)$ and is invertible (so $0\notin \sigma(B)$), there is a holomorphic branch of the square root function $\sqrt{\cdot}:U\to\bC$ in a neighborhood $U$ of the spectrum of $B$. The holomorphic functional calculus gives an element $A=\sqrt{B}$ in the Banach algebra generated by $B$ such that $A^2 = B$. Moreover, since $V$ is finite-dimensional, the operator $A$ is a polynomial in $B$. In particular, $A\in Z_{\{B\}}(\End_\bF(V))$. Moreover, $A$ is invertible with $A^{-1} = AB^{-1}$. Finally, note that if $\delta\in \sigma(B)$, then $\sqrt{\delta}\in \sigma(A)$ (where this is the chosen branch of the square root) and $-\sqrt{\delta}\notin \sigma(A)$. All eigenvalues of $A$ arise this way. 
    \end{proof}
    
    It is worth noting that extending results obtained by (holomorphic, continuous, or Borel) functional calculi on $\bC$ to arbitrary algebraically closed fields of characteristic 0 using model theory is quite difficult in general. Sentences in the language of rings may only involve polynomials equations, greatly limiting the applicability of model theoretic techniques. However, general holomorphic functions need not send, for example, $\bar\mbbQ$ to itself. Thus, we should not expect that most results obtained by functional calculi to be applicable to arbitrary $\bF$ anyways.

    \begin{remark}\label{std-proof}
        Here is a more constructive proof of Lemma \ref{sqrt-uSu}. Let $b(x)=(x-\lambda_1)^{n_1}\dots(x-\lambda_k)^{n_k}$ be the minimal polynomial of $B$, where the $\lambda_i\in\bF^\times$ are distinct and nonzero. For each $i=1,\dots,n$, using the fact that $\bF$ is algebraically closed of characteristic 0, let $a_i(x)$ be the Taylor polynomial of (a branch of) $\sqrt{x}$ about $x=\lambda_i$ of degree $n_i-1$. Then, $a_i(x)$ satisfies $b_i(x)^2=x\pmod{(x-\lambda_i)^{n_i}}$. By the Chinese remainder theorem, there is a polynomial $a(x)$ such that $a(x)\equiv a_i(x)\pmod{(x-\lambda_i)^{n_i}}$ for all $i$. In particular, $a(x)$ satisfies $a(x)^2 = x\pmod{b(x)}$, so $a(x)^2 = x + p(x)b(x)$ for some polynomial $p(x)$. Therefore, if $A = a(B)$, then $A^2 = B + p(B)b(B) = B$. It follows that $A$ commutes with the centralizer of $B$ because it is polynomial in $B$. Moreover, $a(\sigma(B)) = \sigma(a(B))$, so each $\delta\in\sigma(B)$ may correspond to at most one $\lambda\in \sigma(a(B))$.
    \end{remark}

    \begin{theorem}\label{action-extends}
        Suppose $H$ is a finite-dimensional quasitriangular Hopf algebra and $M$ is a finite-dimensional $H$-module. Then, there is an $\H$-module $\tilde M$ such that, when the action is restricted to $H$, $\tilde M|_H\cong M$ as $H$-modules. Moreover, if $M$ is indecomposable, then there are precisely two such $\H$-modules, up to isomorphism.
    \end{theorem}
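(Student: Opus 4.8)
The plan is to reduce the existence of a compatible $\H$-action to finding a suitable square root of the action of $uS(u)$ on $M$, and then to count such square roots on indecomposables via the structure of $\End_H(M)$. Recall from Definition \ref{formal-ribbon} that an $\H$-module structure on the vector space underlying $M$ extending the given $H$-action amounts to a choice of $\bF$-linear operator $T:M\to M$ playing the role of the action of $\rib$, subject to exactly the constraints forced by $\rib^2 = uS(u)$ and $\rib\in Z(\H)$: namely $T^2 = (uS(u))\cdot_M$ and $T$ commutes with $h\cdot_M$ for every $h\in H$ (the coalgebra/antipode identities $\Delta(\rib) = (R_{21}R)^{-1}(\rib\otimes\rib)$, $S(\rib)=\rib$, $\epsilon(\rib)=1$ are conditions on the Hopf structure of $\H$, which is already fixed, not extra conditions on a single module). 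Since $uS(u)\in Z(H)$ by Proposition \ref{Drinfeld-props}, the operator $B := (uS(u))\cdot_M$ lies in $\End_H(M) \subseteq \End_\bF(M)$ and is invertible (as $uS(u)$ is invertible in $H$). So what I need is an invertible $T\in\End_\bF(M)$ with $T^2 = B$ that commutes with all of $H$, i.e. $T\in\End_H(M)$.

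For \emph{existence} in general, apply Lemma \ref{sqrt-uSu} to $B\in\Aut_\bF(M)$: it produces a square root $A\in\Aut_\bF(M)$ that commutes with the centralizer of $B$ in $\End_\bF(M)$. Since the action of $H$ on $M$ lands in the centralizer of $B$ (as $B$ is $H$-linear), $A$ commutes with $H\cdot_M$, hence $A\in\End_H(M)$. Setting $\rib\cdot_M := A$ gives the desired $\tilde M$ with $\tilde M|_H\cong M$. This handles the first assertion for arbitrary $M$ (not just indecomposable), and I would state it that way.

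For the \emph{counting} statement, restrict to $M$ indecomposable. Then $\End_H(M)$ is a finite-dimensional local $\bF$-algebra, so $\End_H(M) = \bF\cdot\id_M \oplus \mathfrak{m}$ where $\mathfrak{m}$ is the (nilpotent) Jacobson radical; every element is either a unit or nilpotent, and in particular $B = \lambda\,\id_M + N$ with $\lambda\in\bF^\times$ (it must be a unit) and $N\in\mathfrak{m}$ nilpotent. I want to classify $T\in\End_H(M)$ with $T^2 = B$; any such $T$ is automatically a unit (its square is), so $T = \mu\,\id_M + N'$ with $\mu\in\bF^\times$, $N'\in\mathfrak{m}$, and squaring gives $\mu^2 = \lambda$, forcing $\mu = \pm\sqrt\lambda$ — exactly two choices of scalar part, call them $\mu_{\pm} = \pm\sqrt\lambda$. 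It remains to check that for each fixed $\mu_{\pm}$ there is a \emph{unique} $T$ with that scalar part: given $\mu$ with $\mu^2 = \lambda$, I solve $(\mu\,\id + N')^2 = \lambda\,\id + N$ for $N'\in\mathfrak m$, i.e. $2\mu N' + N'^2 = N$; since $2\mu$ is invertible and $N, N'$ are nilpotent, this can be solved uniquely by successive approximation in the $\mathfrak m$-adic filtration (equivalently, the map $N'\mapsto 2\mu N' + N'^2$ is a bijection of $\mathfrak m$ to itself, being a unit-linear term plus higher-order terms on a nilpotent ideal). Hence there are exactly two operators $T = \rib\cdot_M$, namely $T_{\pm}$ with scalar parts $\mu_{\pm}$; note $T_- = -\,?$ is \emph{not} simply $-T_+$ in general unless $N=0$, but the two are genuinely distinct since their scalar parts differ. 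Finally I must verify the two resulting $\H$-modules $\tilde M_{\pm}$ are non-isomorphic: an $\H$-isomorphism $\tilde M_+ \to \tilde M_-$ restricting to an $H$-automorphism $\varphi\in\End_H(M)^\times$ would satisfy $\varphi T_+ = T_- \varphi$, and since $\End_H(M)$ is local (hence $T_+$ and $T_-$, being conjugate, would have the same scalar part mod $\mathfrak m$) this is impossible as $\mu_+ \neq \mu_-$ in $\bF = \End_H(M)/\mathfrak m$.

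The main obstacle I anticipate is the uniqueness-of-square-root-with-given-scalar-part step and its clean packaging: one must argue that solving $2\mu N' + N'^2 = N$ in the local ring has a unique nilpotent solution, which is a standard Hensel/successive-approximation argument but needs to be spelled out, and one must be careful that $\mathrm{char}\,\bF = 0$ is what makes $2\mu$ invertible. Everything else — translating an $\H$-module structure into the pair (an $H$-action, a central square root of $B$), and the non-isomorphism via locality of $\End_H(M)$ — is formal.
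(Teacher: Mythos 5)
Your proof is correct. The existence half follows the paper exactly, reducing to Lemma \ref{sqrt-uSu}. For the ``precisely two'' half you take a genuinely different route: the paper constructs $M^{\pm}$ with $\rib$ acting by $\pm A$, distinguishes them by the spectrum of $\rib\cdot-$, and then shows the induced module $M\oplus\rib M$ is $\H$-isomorphic to $M^{+}\oplus M^{-}$, concluding by Krull--Schmidt (the unstated step being that any $N$ with $N|_H\cong M$ is a direct summand of $\H\otimes_H M$). You instead observe that $\H\cong H[\rib]/(\rib^2-uS(u))$ with $\rib$ central, so $\H$-structures on the fixed space $M$ extending the $H$-action are exactly elements $T\in\End_H(M)$ with $T^2=B$, and you classify these directly in the local ring $\End_H(M)$. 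Your argument is more self-contained and in fact gives the slightly stronger conclusion that there are exactly two such operators $T$ on the nose, not just two isomorphism classes; the paper's version is shorter in that it avoids the Hensel-type uniqueness step.

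Two small notes. First, your parenthetical claim that $T_-$ is ``not simply $-T_+$ in general unless $N=0$'' is wrong: since $(-T_+)^2 = B$ and $-T_+$ has scalar part $\mu_-$, your own uniqueness argument forces $T_- = -T_+$, always. This matches the paper's $T_{\pm}=\pm A$ and does not affect the proof. Second, the successive-approximation uniqueness step should be stated carefully because $\End_H(M)$ need not be commutative; but it does go through: if $N_1',N_2'\in\mathfrak m$ both solve $2\mu N' + N'^2 = N$, then $D := N_1'-N_2'$ satisfies $2\mu D = -N_1' D - D N_2'$, so $D\in\mathfrak m^k$ implies $D\in\mathfrak m^{k+1}$, whence $D=0$ by nilpotence of $\mathfrak m$.
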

    \begin{proof}
        By Lemma \ref{sqrt-uSu}(1), there is a linear map $A\in \End_\bF(M)$ such that $A^2=uS(u)\cdot -$ and $A(h\cdot v) = h\cdot Av$ for all $v\in M$. As a vector space, set $\tilde M := M$ and define the action
        $$(a + b\rib)\cdot_{\tilde M} v := a\cdot_M v + b\cdot_M Av$$
        for $a, b\in H$ and $v\in \tilde M$. It is clear that this action does indeed define an $\H$-module and that $\tilde M|_H = M$. 
        
        Set $M^+=\tilde M$, and let $M^-$ be the $\H$-module with the conjugate action: 
        $$(a + b\rib)\cdot_{M^-} v := a\cdot_M v - b\cdot_M Av$$
        for any $a,b\in H$ and $m\in M$. Note that indeed $M^-|_H=M$. Note that $M^-\not\cong M^+$ as $\H$-modules, as the spectra of $\rib\cdot -$ differ by Lemma \ref{sqrt-uSu}(2). Consider $M\oplus \rib M$ as an $\H$-module in the obvious way. Define $F:M\oplus \rib M\to M^+\oplus M^-$ by
        $$m_1\oplus \rib m_2\mapsto (m_1+Am_2)\oplus (m_1-Am_2).$$
        It is easily shown that this map is an $\H$-linear isomorphism. Thus, if $M$ is indecomposable, the result follows by the uniqueness of the decomposition of $M\oplus \rib M$ into indecomposable $\H$-modules.
    \end{proof}

    \begin{corollary}
        Let $H$ be a finite-dimensional quasitriangular Hopf algebra and $M$ be a finite-dimensional indecomposable $H$-module. Then, $uS(u)\cdot -:M\to M$ has precisely one distinct eigenvalue.
    \end{corollary}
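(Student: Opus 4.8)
The plan is to deduce this as a quick consequence of Theorem~\ref{action-extends} together with the structure of the square root $A$ constructed in Lemma~\ref{sqrt-uSu}. Since $M$ is indecomposable, its endomorphism algebra $\End_H(M)$ is local, so by Fitting's lemma every $H$-linear endomorphism of $M$ is either nilpotent or invertible. In particular, the central element $uS(u)$ acts on $M$ by an $H$-linear (indeed central) operator $T := uS(u)\cdot{-}$, which is invertible because $uS(u)$ is an invertible element of $H$. So $T$ is an invertible element of the local ring $\End_H(M)$, hence $T - \lambda\cdot\id$ is either nilpotent or invertible for each $\lambda\in\bF$; choosing $\lambda$ to be an eigenvalue of $T$ (which exists since $\bF$ is algebraically closed) forces $T-\lambda\cdot\id$ to be non-invertible, hence nilpotent, which is exactly the statement that $T$ has a single eigenvalue.

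Concretely I would argue: let $\lambda\in\bF$ be any eigenvalue of $T = uS(u)\cdot{-} \in \End_H(M)$. Then $T - \lambda\,\id_M$ is an $H$-module endomorphism of the indecomposable module $M$ with nontrivial kernel, so it is not an automorphism; since $\End_H(M)$ is local, $T-\lambda\,\id_M$ must be nilpotent. Therefore $\spec(T) = \{\lambda\}$, i.e.\ $uS(u)\cdot{-}$ has precisely one distinct eigenvalue. One should note that this uses only that $uS(u)$ is central in $H$ (Proposition~\ref{Drinfeld-props}) and that $M$ is indecomposable — the ribbon extension is not strictly needed — but it fits naturally here since it is the reason the two $\H$-actions $M^\pm$ of Theorem~\ref{action-extends} are distinguished purely by the sign of the eigenvalue of $\rib\cdot{-}$.

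The only real subtlety is justifying that $\End_H(M)$ is local for a finite-dimensional indecomposable module over a finite-dimensional algebra; this is classical (the Krull--Schmidt property / Fitting's lemma), so there is no genuine obstacle. Alternatively, and perhaps more in the spirit of the preceding proof, one can observe directly that the square root $A$ produced by Lemma~\ref{sqrt-uSu} lies in the algebra generated by $T$, so $A$ commutes with $\End_H(M)$; if $T$ had two distinct eigenvalues $\lambda_1\neq\lambda_2$, the associated (polynomial-in-$T$) spectral projections would be nontrivial $H$-linear idempotents, contradicting indecomposability. Either route is short; I would present the Fitting's lemma argument as the main line.
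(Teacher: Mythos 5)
Your proof is correct, and it takes a genuinely different route from what the paper intends. The paper states this as a corollary of Theorem~\ref{action-extends}: the implied derivation is that if $uS(u)\cdot{-}$ had two distinct eigenvalues $\lambda_1\neq\lambda_2$, then the polynomial construction of the square root $A$ in Remark~\ref{std-proof} allows independent sign choices $(\pm\sqrt{\lambda_1},\pm\sqrt{\lambda_2})$ on the two generalized eigenspaces, producing (at least) four pairwise non-isomorphic $\H$-module structures on $M$ distinguished by $\spec(\rib\cdot{-})$, contradicting the ``precisely two'' uniqueness in the theorem. Your argument instead bypasses the ribbon extension entirely: $uS(u)$ is central, so $T=uS(u)\cdot{-}$ lies in $\End_H(M)$, which is local by Fitting's lemma; hence $T-\lambda\,\id_M$ is nilpotent for any eigenvalue $\lambda$, forcing a single eigenvalue. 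This is shorter, self-contained, and strictly more general — as you note, it applies to any central element of any finite-dimensional algebra acting on an indecomposable module, and even the invertibility of $uS(u)$ is not needed. What the paper's route buys is coherence with the surrounding narrative (it illustrates why the ``precisely two'' count of Theorem~\ref{action-extends} is tight); what yours buys is a cleaner logical dependency and a proof that makes the statement transparent independently of the extension machinery. Both are valid; yours is arguably the better proof of the bare statement.
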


    \begin{remark}
        Note that $\rib\cdot -$ need not be a polynomial in $uS(u)\cdot -$ for general $\H$-modules. Let $H=\bC$ be the trivial quasitriangular complex Hopf algebra (with $R=1\otimes 1$). Consider the possible ways to make $\bC^2$ into a $\tilde\bC=\bC[\bZ_2]$-module. $\bC$ acts by scalar multiplication on $\bC^2$, so any choice of square root of $uS(u)\cdot - = \id_{\bC^2}$ suffices for the action of $\rib\cdot -$. For example, we may set $\rib\cdot - = \left(\begin{smallmatrix}
            1 & 0\\
            0 & -1
        \end{smallmatrix}\right)$, which is certainly not a polynomial in $\id_{\bC^2}$. In this case, $\rib\cdot -$ is diagonalizable with eigenvalues in $\{1, -1\}$. However, in the case of simple $H$-modules, $\rib\cdot -$ is just a scalar multiple of the identity.
    \end{remark}

    As in the proof of Theorem \ref{action-extends}, given an indecomposable $H$-module $M$, we will denote the two $\H$-modules which restrict to $M$ as $M^+$ and $M^-$. Note that the complex square root has no general notion of positive and negative square root. With the exception of the trivial $H$-module $V_1$, $M^+$ and $M^-$ can be interchanged throughout this work without loss of generality. For $V_1$, the $\H$-module $V^+_1$, where $\rib\cdot - = \id_{V_1^+}$, is the trivial $\H$-module, while the $\H$-module $V^-_1$, where $\rib\cdot - = -\id_{V_{1}^+}$, is a sort of sign module. In particular, the ribbon category tensor-generated by $V_1^-$ is $\Vecm$.

    \begin{proposition}\label{simples-double}
        Suppose $H$ is a finite-dimensional quasitriangular Hopf algebra and $N$ is a simple $\H$-module. Then, $N|_H$ is a simple $H$-module. 
    \end{proposition}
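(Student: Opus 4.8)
The plan is to show the contrapositive: if $N|_H$ is not simple, then $N$ is not simple. So suppose $N|_H$ has a proper nonzero $H$-submodule; we want to promote it, or a related subspace, to an $\H$-submodule. The key observation is that $\H = H \oplus H\rib$ with $\rib$ central, so a subspace $W \subseteq N$ is an $\H$-submodule if and only if it is an $H$-submodule that is additionally stable under the single operator $\rib \cdot - : N \to N$. Thus the task is purely about the linear operator $T := \rib\cdot-$ on $N$, which satisfies $T^2 = uS(u)\cdot-$ and commutes with the $H$-action.

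First I would analyze $T$ more carefully. Since $N$ is simple, by Proposition \ref{simples-double}'s hoped-for consequence (or directly: $T$ commutes with the $H$-action on the simple module $N$, and $\End_H(N) = \bF$ by Schur's lemma since $\bF$ is algebraically closed), $T$ is a scalar: $T = \lambda\,\id_N$ for some $\lambda \in \bF^\times$. This immediately gives the result: if $W \subseteq N$ is any proper nonzero $H$-submodule, then $W$ is automatically $T$-stable (as $T$ is scalar), hence an $\H$-submodule, contradicting simplicity of $N$. So in fact $N$ simple forces $N|_H$ simple.

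Let me restructure to make the logic clean and avoid circularity. The argument is: (1) $N$ simple $\Rightarrow$ $\End_\H(N) = \bF$ by Schur (algebraically closed field); (2) $\rib \in Z(\H)$, so $\rib\cdot- \in \End_\H(N)$, hence $\rib\cdot- = \lambda\,\id_N$ for some scalar $\lambda$, and $\lambda^2 = $ the scalar by which $uS(u)$ acts (also a scalar, for the same reason, or because $uS(u)\in Z(H) \subseteq Z(\H)$); (3) therefore every $H$-submodule of $N|_H$ is stable under $\rib\cdot-$, hence is an $\H$-submodule; (4) since $N$ is simple as an $\H$-module, its only $\H$-submodules are $0$ and $N$, so $N|_H$ has only $0$ and $N$ as $H$-submodules, i.e., $N|_H$ is simple (it is nonzero since $N \neq 0$).

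I do not anticipate a serious obstacle here; the whole proof rests on the centrality of $\rib$ together with Schur's lemma over an algebraically closed field, and these are both in hand. The one point requiring a word of care is step (3): one must note that an $\H$-submodule is precisely an $H$-submodule closed under the action of $\rib$ (using $\H = H + H\rib$ as algebras), which is immediate from the definition of $\H$ in Definition \ref{formal-ribbon}. It is perhaps also worth remarking that this gives a sharper statement than asked: $\rib$ acts on any simple $\H$-module as an explicit square root of the scalar by which the central element $uS(u)$ acts, which ties back to the discussion of $M^+$ versus $M^-$ preceding the proposition.
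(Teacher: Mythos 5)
Your proof is correct and follows essentially the same approach as the paper: centrality of $\rib$ plus Schur's lemma over an algebraically closed field forces $\rib$ to act as a scalar on $N$, so every $H$-submodule of $N|_H$ is automatically an $\H$-submodule, and simplicity of $N$ over $\H$ transfers down.
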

    \begin{proof}
        Suppose $M\subseteq N|_H$ is an $H$-submodule of $N$. Note that $\rib\cdot -:N\to N$ is an $\H$-linear map since $\rib\in Z(H)$, so there is some $c\in\bF^\times$ such that $\rib\cdot - = c\id_N$ by Schur's lemma. In particular, $\rib\cdot m = cm$ for all $m\in M\subseteq N$. Therefore, $M$ is an $\H$-submodule of $N$, so $M=N$ or $M=0$. Thus, $N|_H$ is a simple $H$-module.
    \end{proof}
    
    \begin{corollary}\label{semisimple-iff}
        Let $H$ be a finite-dimensional quasitriangular Hopf algebra. Then, $H$ is semisimple if and only if $\H$ is semisimple.
    \end{corollary}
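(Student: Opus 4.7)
The plan is to prove both directions via Maschke's theorem for Hopf algebras, which states that a finite-dimensional Hopf algebra is semisimple if and only if it has a (left) integral on which the counit does not vanish. The key input is the formula $\mathrm{int}_L(\H) = (1+\rib)\mathrm{int}_L(H)$ established in the proposition preceding Proposition \ref{not-factorizable}.

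First I would handle the forward direction. Suppose $H$ is semisimple, and pick a left integral $\Lambda \in \mathrm{int}_L(H)$ with $\epsilon(\Lambda)\neq 0$. Then $(1+\rib)\Lambda$ is a left integral of $\H$, and
\[
\epsilon((1+\rib)\Lambda) = (1 + \epsilon(\rib))\epsilon(\Lambda) = 2\epsilon(\Lambda).
\]
Since $\bF$ has characteristic $0$, this is nonzero, so $\H$ is semisimple. For the converse, suppose $\H$ is semisimple and let $\tilde\Lambda$ be a left integral of $\H$ with $\epsilon(\tilde\Lambda)\neq 0$. By the integral formula, we can write $\tilde\Lambda = (1+\rib)\Lambda$ for some $\Lambda \in \mathrm{int}_L(H)$, and the same counit computation shows $\epsilon(\Lambda) = \tfrac{1}{2}\epsilon(\tilde\Lambda)\neq 0$, so $H$ is semisimple.

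Alternatively, the converse direction also falls out of the representation-theoretic results above: given any $H$-module $M$, Theorem \ref{action-extends} produces an $\H$-module $\tilde M$ with $\tilde M|_H \cong M$; if $\H$ is semisimple then $\tilde M$ decomposes as a direct sum of simple $\H$-modules, and Proposition \ref{simples-double} says each of these restricts to a simple $H$-module, so $M$ is semisimple as an $H$-module.

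I do not expect any serious obstacle here: the integral formula has already been established, and the only place the characteristic matters is in asserting $2\neq 0$, which is built into the standing assumption on $\bF$. The argument is essentially just packaging the previously proved structural results.
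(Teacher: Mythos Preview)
Your proof is correct, but it follows a different route from the paper. The paper argues purely representation-theoretically via a dimension count: using Theorem \ref{action-extends} and Proposition \ref{simples-double}, the simple $\H$-modules are exactly the $V^+$ and $V^-$ as $V$ ranges over simple $H$-modules, and one checks that
\[
\sum_{V\ \mathrm{simple}} \bigl(\dim(V^+)^2 + \dim(V^-)^2\bigr) = 2\sum_{V\ \mathrm{simple}} \dim(V)^2,
\]
which equals $2\dim H = \dim\H$ precisely when $H$ is semisimple. Your argument instead invokes Maschke's theorem together with the integral formula $\mathrm{int}_L(\H) = (1+\rib)\mathrm{int}_L(H)$ from Section~\ref{formal-ribbon-sec}. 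This is arguably cleaner and more self-contained: it does not depend on the classification of simple $\H$-modules and uses only the structural fact about integrals already established. The paper's argument, on the other hand, keeps the proof inside the representation-theoretic machinery being developed in Section~\ref{rep-section} and reinforces the utility of the $V\mapsto V^\pm$ correspondence. Your alternative converse via Theorem~\ref{action-extends} and Proposition~\ref{simples-double} is also valid and is close in spirit to the paper's approach, though the paper packages both directions into a single dimension equality rather than arguing module-by-module.
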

    \begin{proof}
        Suppose $H$ is semisimple. Note that
        $$\dim\left(\bigoplus_{\substack{V\text{ simple }\\\text{$H$-module}}} \dim(V^+)V^+\oplus \dim(V^-)V^-\right) = 2\dim\left(\bigoplus_{\substack{V\text{ simple }\\\text{$H$-module}}} \dim(V)V\right) = 2\dim H = \dim \H.$$
        Thus, all simple $\H$-modules must be their own projective covers. The same equality of dimensions shows the opposite implication.
    \end{proof}

    \begin{theorem}\label{indec-projectives-restrict}
        Let $V$ be a finite-dimensional simple module over a finite-dimensional quasitriangular Hopf algebra $H$. Let $P$ be the projective cover of $V$ as $H$-modules and $P_\pm$ be the projective covers of $V^\pm$ as $\H$-modules. Then, $P_\pm|_H\cong P$ as $H$-modules.
    \end{theorem}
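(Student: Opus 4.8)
The plan is to exploit the $\bZ_2$-grading of $\H$ and the description of $\H$-modules restricting to a given $H$-module from Theorem~\ref{action-extends}. First I would recall that the restriction functor $\Rep(\H)\to\Rep(H)$ (restricting along $H\incl\H$) has both a left and a right adjoint, namely induction $N\mapsto \H\otimes_H N$; since $\H$ is free of rank $2$ as a left (and right) $H$-module, restriction is exact and faithful, and induction is exact. The key observation is that $\H\otimes_H M \cong M^+\oplus M^-$ as $\H$-modules for indecomposable $M$: indeed, this is essentially the isomorphism $F:M\oplus\rib M\to M^+\oplus M^-$ constructed in the proof of Theorem~\ref{action-extends}, now read as an isomorphism of induced modules. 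Thus induction of an indecomposable splits into exactly the two lifts, and restriction of either $M^+$ or $M^-$ returns $M$.

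Next I would compute projective covers using adjunction. Since induction $\mathrm{Ind}=\H\otimes_H(-)$ is left adjoint to the exact faithful functor $\mathrm{Res}$, it sends projectives to projectives; in fact, because $\H$ is free as an $H$-module, $\mathrm{Ind}$ also sends projectives to projectives on the other side, and $\mathrm{Res}$ sends projectives to projectives as well (as $\H$ is projective, indeed free, as a right $H$-module, so $\H\otimes_H P$-type arguments apply; more directly, $\H$ is a Frobenius extension of $H$, or simply: a projective $H$-module $P$ is a summand of $H^n$, hence $\H\otimes_H P$ is a summand of $\H^n$, which is $H$-free). Therefore $\mathrm{Res}(P_\pm)$ is a projective $H$-module. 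Moreover $\mathrm{Res}(P_\pm)$ surjects onto $\mathrm{Res}(V^\pm)\cong V$ by applying $\mathrm{Res}$ to $P_\pm\twoheadrightarrow V^\pm$. So $\mathrm{Res}(P_\pm)$ is a projective $H$-module mapping onto $V$, hence contains (a copy of) $P$ as a direct summand, say $\mathrm{Res}(P_\pm)\cong P\oplus Q$ for some projective $H$-module $Q$.

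It remains to show $Q=0$, i.e.\ that $\mathrm{Res}(P_\pm)$ is indecomposable, equivalently that $\dim P_\pm = \dim P$. I would argue by a dimension/multiplicity count. By Proposition~\ref{simples-double}, the simple $\H$-modules are exactly the $W^\pm$ for $W$ simple over $H$ (and these are pairwise non-isomorphic, as their $\rib$-eigenvalues are $\pm$ square roots — except note $W^+\not\cong W^-$ always, since $\rib$ acts by a scalar $c$ on $W^+$ and $-c$ on $W^-$ with $c\neq 0$). Now decompose the regular representation: $\H\cong\bigoplus_{W}(\dim W^+)\,\hat P_{W^+}\oplus(\dim W^-)\,\hat P_{W^-}$ where $\hat P$ denotes the respective projective cover, and on the other hand $\mathrm{Res}(\H)\cong H^{\oplus 2}\cong\bigoplus_W (2\dim W)\,P_W$. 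Applying $\mathrm{Res}$ to the first decomposition and using that each $\mathrm{Res}(\hat P_{W^\pm})$ contains $P_W$ as a summand, a comparison of the multiplicity of the projective indecomposable $P_W$ on both sides forces each $\mathrm{Res}(\hat P_{W^\pm})$ to be exactly $P_W$ with no extra summands. Concretely: the multiplicity of $P_V$ in $\mathrm{Res}(\H)$ is $2\dim V$; the contribution from the $W=V$ terms of the first decomposition is at least $(\dim V^+)+(\dim V^-)=2\dim V$ copies of $P_V$ (since $\dim V^\pm=\dim V$ and each $\mathrm{Res}(P_\pm)$ contains $\ge 1$ copy of $P_V$), and any $W\neq V$ term would only add more; so equality holds throughout, every $\mathrm{Res}(\hat P_{W^\pm})$ contains exactly one copy of its $P_W$ and nothing else, i.e.\ $\mathrm{Res}(P_\pm)\cong P$.

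The main obstacle I anticipate is the bookkeeping in the last paragraph: one must be careful that the summands appearing in $\mathrm{Res}(P_\pm)$ are among the $P_W$ with $W$ ranging over simples satisfying some compatibility (a priori $\mathrm{Res}(P_\pm)$ could contain $P_W$ for $W$ not the restriction of a constituent of $P_\pm$), and that no cancellation in multiplicities occurs. An alternative, cleaner route that avoids this is: show directly that $P_\pm = (\,\widetilde{P}\,)^{\pm}$, i.e.\ that one of the two $\H$-lifts of the indecomposable projective $H$-module $P$ is already projective over $\H$. Since $\H$ is free of rank $2$ over $H$, the module $\H\otimes_H P$ is $\H$-projective and restricts to $P\oplus P$; but by the splitting $\H\otimes_H P\cong P^+\oplus P^-$ (the $F$-isomorphism of Theorem~\ref{action-extends} applied to the indecomposable $M=P$), each $P^\pm$ is a direct summand of a projective, hence projective over $\H$. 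As $P^\pm$ has $V^\pm$ as its top (its radical is $\mathrm{rad}(P)$ with the appropriate $\rib$-action, and $P^\pm/\mathrm{rad}(P^\pm)\cong V^\pm$), it is indecomposable with simple top $V^\pm$, so $P^\pm\cong P_\pm$. Then $P_\pm|_H\cong P^\pm|_H\cong P$ is immediate. I would present this second argument as the main proof, using the first only as motivation, since it sidesteps the delicate multiplicity count entirely.
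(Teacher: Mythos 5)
Your main argument (the one you elect to present) is essentially the paper's: both hinge on the fact that $\H\otimes_H P\cong P\oplus\rib P$ is a projective $\H$-module that decomposes as $P_+\oplus P_-$, after which the claim follows by restricting and invoking Krull--Schmidt. The difference is in how the two summands are identified. The paper bounds the number of indecomposable summands of $P\oplus\rib P$ by two (since its restriction is $P\oplus P$) and exhibits nonzero $\H$-maps onto both $V^+$ and $V^-$; you instead apply the $F$-isomorphism of Theorem~\ref{action-extends} to get $\H\otimes_H P\cong P^+\oplus P^-$ and argue that each $P^\pm$ is already the projective cover of $V^\pm$. Your route is a bit more explicit and additionally shows $P_\pm\cong P^\pm$, i.e.\ the projective covers of $V^\pm$ are exactly the two lifts of $P$, which the paper only obtains implicitly.

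One step is asserted without justification: that $\mathrm{rad}_\H(P^\pm)=\mathrm{rad}_H(P)$, so that $P^\pm/\mathrm{rad}(P^\pm)\cong V^\pm$. This amounts to $J(\H)=\H J(H)$, which does hold here (e.g.\ because $\H$ is a crossed product of $H$ with $\bZ_2$ and $2$ is invertible in $\bF$), but a subalgebra's Jacobson radical need not generate the overalgebra's, so this deserves a proof. You can avoid the radical computation entirely: $P^\pm$ is projective (as a summand of $\H\otimes_H P$) and indecomposable (its restriction $P$ is indecomposable), hence it is the projective cover of some simple $\H$-module $W^\pm$; the nonzero $\H$-map $P^\pm\twoheadrightarrow W^\pm$ restricts to a nonzero $H$-map $P\to W$, and since $P$ is the projective cover of $V$ this forces $W\cong V$. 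Thus $\{P^+,P^-\}\subseteq\{P_+,P_-\}$, and since $P^+\not\cong P^-$ this is an equality, giving $P_\pm|_H\cong P$. Your first (multiplicity-count) argument is also sound as written, but the cleaner second argument, with the above patch, is preferable.
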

    \begin{proof}
        For a simple $H$-module $V$, let $P(V)$ denote its ($H$-module) projective cover. Note that
        $$\H = H\oplus H\rib\cong \bigoplus_{V\text{ simple}} (\dim V)(P(V)\oplus \rib P(V)).$$
        Thus, $P(V)\oplus P(V)\rib$ is projective for every $V$.

        Let $p:P\to V$ be an $H$-module covering map. Without loss of generality, $V^\pm|_H = V$ as $H$-modules. Define the maps $p_\pm:P\oplus \rib P\to V^{\pm}$ given by $p_\pm(h_1 + h_2\rib) = p(h_1)+\rib\cdot p(h_2)$ for $h_1, h_2\in H$. Note that these maps are clearly $\H$-linear. Since $p$ is nonzero, both are nonzero. That is, $P\oplus \rib P$ has nonzero maps into both $V^+$ and $V^-$. Given a simple module $W$, $P(W)$ is the only projective indecomposable module (up to isomorphism) with a nonzero map into $W$. Since $(P\oplus \rib P)|_H\cong P\oplus P$ as $H$-modules, the decomposition of $P\oplus \rib P$ into a direct sum of $\H$-modules may contain at most two indecomposable summands. It follows that $P\oplus \rib P\cong P_+\oplus P_-$ as $\H$-modules. The isomorphism can be considered as an $H$-module isomorphism so that
        $$P\oplus P\cong (P\oplus \rib P)|_H\cong P_+|_H\oplus P_-|_H$$
        as $H$-modules. The result follows by the uniqueness of decomposition of $P\oplus \rib P$ into indecomposable $H$-modules.
    \end{proof}
    
    \begin{corollary}\label{projectives-restrict}
        Suppose $H$ is a finite-dimensional quasitriangular Hopf algebra and $Q$ is a projective $\H$-module. Then, $Q|_H$ is a projective $H$-module. Moreover, every projective $H$-module arises this way.
    \end{corollary}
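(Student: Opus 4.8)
The plan is to reduce to the indecomposable case and invoke Theorem \ref{indec-projectives-restrict}. First I would record the classification of indecomposable projective $\H$-modules: they are precisely the modules $P_+$ and $P_-$ as $V$ ranges over isomorphism classes of simple $H$-modules. Indeed, by Krull--Schmidt every projective $\H$-module is a finite direct sum of projective covers of simple $\H$-modules; and by Proposition \ref{simples-double} together with Theorem \ref{action-extends}, every simple $\H$-module $N$ has $N|_H$ simple, hence indecomposable, so $N|_H = V$ for a (unique) simple $H$-module $V$ and $N$ is one of the two $\H$-structures $V^{\pm}$ on $V$. Thus every indecomposable projective $\H$-module is the projective cover of some $V^+$ or $V^-$.

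Next, given an arbitrary projective $\H$-module $Q$, I would write $Q \cong \bigoplus_i Q_i$ with each $Q_i$ an indecomposable projective $\H$-module, so that each $Q_i$ is the projective cover of some $V_i^{\pm}$. By Theorem \ref{indec-projectives-restrict}, $Q_i|_H \cong P(V_i)$, the $H$-module projective cover of $V_i$, which is projective over $H$. Since restriction along $H \hookrightarrow \H$ commutes with finite direct sums, $Q|_H \cong \bigoplus_i Q_i|_H$ is a direct sum of projective $H$-modules and hence projective.

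For the ``moreover'' assertion, let $P$ be any projective $H$-module and decompose $P \cong \bigoplus_j P(V_j)$ into indecomposable projective $H$-modules, i.e.\ projective covers of simple $H$-modules $V_j$. For each $j$ let $\Pi_j$ denote the $\H$-module projective cover of $V_j^+$; then $\Pi_j|_H \cong P(V_j)$ by Theorem \ref{indec-projectives-restrict}. Setting $Q := \bigoplus_j \Pi_j$, which is projective over $\H$, we obtain $Q|_H \cong \bigoplus_j \Pi_j|_H \cong P$.

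The only step requiring genuine care is the classification of simple $\H$-modules invoked at the outset (that every simple $\H$-module is some $V^{\pm}$), which rests on combining Proposition \ref{simples-double} with the uniqueness part of Theorem \ref{action-extends}. Everything else is routine bookkeeping with Krull--Schmidt and the fact that finite direct sums of projectives are projective, so I anticipate no real obstacle.
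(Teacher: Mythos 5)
Your argument is correct and is exactly the natural route the paper intends: decompose projectives into indecomposables via Krull--Schmidt, identify every indecomposable projective $\H$-module as a $P_\pm$ using Proposition~\ref{simples-double} together with the uniqueness in Theorem~\ref{action-extends}, and then apply Theorem~\ref{indec-projectives-restrict} summand by summand for both directions. No gaps or deviations from the paper's approach.
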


    The question of whether indecomposable $\H$-modules restrict to indecomposable $H$-module remains unclear to the author. However, Proposition \ref{simples-double} and Corollary \ref{ribbon-reps-factor} give a positive answer in the simplest cases. Thus, we conjecture that this is true in general. This would, in particular, imply that, if $P$ is a projective $H$-module, then any $\tilde P$ is also projective, which is not implied by Corollary \ref{projectives-restrict}.
    \begin{conj}
        Suppose $H$ is a finite-dimensional quasitriangular Hopf algebra and $N$ is a finite-dimensional indecomposable $\H$-module. Then, $N|_H$ is also indecomposable as an $H$-module.
    \end{conj}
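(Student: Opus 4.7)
The plan is to exploit the induction--restriction adjunction between $\Rep(\H)$ and $\Rep(H)$ and count indecomposable summands via Krull--Schmidt.

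The central computation is the following: for any finite-dimensional $\H$-module $N$, there is an isomorphism of $\H$-modules
\[
\H \otimes_H N|_H \;\cong\; N \oplus N^{\sigma},
\]
where $N^{\sigma}$ denotes the $\H$-module whose underlying $H$-module is $N|_H$ but whose $\rib$-action is negated. The explicit map
\[
n_1 \oplus \rib \otimes n_2 \;\mapsto\; (n_1 + \rib\cdot n_2)\oplus (n_1 - \rib\cdot n_2)
\]
is bijective because $\rib\cdot -$ is invertible on $N$ (as $\rib^2 = uS(u) \in H^{\times}$) and $\mathrm{char}\,\bF = 0$, and its $\H$-linearity follows from the centrality of $\rib$ together with the relation $\rib^2 = uS(u)$.

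Next, two easy observations are needed. First, $(-)^{\sigma}$ is an involutive autoequivalence of $\Rep(\H)$, so $N^{\sigma}$ is indecomposable whenever $N$ is. Second, specialising the displayed isomorphism to an indecomposable $H$-module $M$ (equivalently, reading it off of the proof of Theorem~\ref{action-extends}) gives $\H \otimes_H M \cong M^+ \oplus M^-$, and each $M^{\pm}$ is indecomposable as an $\H$-module, for otherwise restricting to $H$ would split $M$.

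To conclude, let $N$ be an indecomposable $\H$-module and decompose $N|_H = \bigoplus_{i=1}^{n} M_i$ into indecomposable $H$-modules. Combining the above,
\[
N \oplus N^{\sigma} \;\cong\; \H \otimes_H N|_H \;\cong\; \bigoplus_{i=1}^{n}\bigl( M_i^+ \oplus M_i^- \bigr).
\]
The left-hand side has exactly two indecomposable summands counted with multiplicity and the right-hand side has $2n$, so Krull--Schmidt forces $n = 1$. The main delicate step is the verification of $\H$-linearity of the central isomorphism, since $\rib$ permutes the two summands of $\H \otimes_H N|_H$ while acting diagonally with opposite signs on $N \oplus N^{\sigma}$; once this is established the remainder of the argument is a clean counting exercise.
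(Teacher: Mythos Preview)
The paper offers no proof of this statement: it is explicitly left as an open conjecture, the author remarking that the question ``remains unclear.'' Your argument, by contrast, appears to settle it.

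All of the steps check out. The isomorphism $\H\otimes_H N|_H\cong N\oplus N^{\sigma}$ is genuine: the map $1\otimes n_1+\rib\otimes n_2\mapsto (n_1+\rib\cdot n_2,\,n_1-\rib\cdot n_2)$ is $\H$-linear precisely because $\rib$ is central and $\rib^2=uS(u)$, and it is bijective since $\mathrm{char}\,\bF\neq 2$ and $\rib$ acts invertibly. The sign twist $(-)^{\sigma}$ is nothing other than $-\otimes V_1^-$, tensoring with an invertible object, hence an autoequivalence, so $N^{\sigma}$ is indecomposable whenever $N$ is. The indecomposability of $M^{\pm}$ for indecomposable $M$ uses only the \emph{easy} direction (any nontrivial $\H$-decomposition restricts to a nontrivial $H$-decomposition of $M^{\pm}|_H=M$), so there is no circularity. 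Krull--Schmidt for finite-dimensional modules over the finite-dimensional algebra $\H$ then forces $2=2n$.

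In effect you have observed that the splitting $M\oplus\rib M\cong M^+\oplus M^-$ already exhibited in the proof of Theorem~\ref{action-extends} is the special case of a functorial isomorphism $\mathrm{Ind}\circ\mathrm{Res}\cong \mathrm{Id}\oplus(-)^{\sigma}$ on $\Rep(\H)$, and that this, combined with Krull--Schmidt, closes the loop. As far as I can see this is a complete proof of the conjecture.
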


\subsection{The tensor category $\Rep(\H)$}
    
    \begin{lemma}\label{tensors-restrict}
        Suppose $H$ is a finite-dimensional quasitriangular Hopf algebra and $N, N'$ are finite-dimensional $\H$-modules. Then, $(N\otimes N')|_H\cong N|_H\otimes N'|_H$ as $H$-modules.
    \end{lemma}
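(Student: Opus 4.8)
The plan is to observe that this is really an equality of $H$-modules, not merely an isomorphism, and that it expresses the fact that the restriction functor $\Rep(\H)\to\Rep(H)$ is strict monoidal. First I would recall that, by definition of the tensor product of $\H$-modules, an element $x\in\H$ acts on $N\otimes N'$ through the comultiplication $\Delta_{\H}(x)$. Next I would invoke the fact — immediate from Definition~\ref{formal-ribbon}, and recorded in Corollary~\ref{exact} — that $H$ is a sub-bialgebra of $\H$: the comultiplication and counit of $\H$ restrict to those of $H$, so $\Delta_{\H}(h)=\Delta_H(h)=h^{(1)}\otimes h^{(2)}\in H\otimes H$ for every $h\in H$. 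Combining these, the $H$-action on $(N\otimes N')|_H$ is
$$h\cdot(n\otimes n') = \Delta_{\H}(h)\cdot(n\otimes n') = (h^{(1)}\cdot n)\otimes(h^{(2)}\cdot n'),$$
where on the right the actions of $h^{(1)}$ and $h^{(2)}$ are exactly the restricted $H$-actions on $N$ and $N'$. But this is precisely the definition of the $H$-module structure on $N|_H\otimes N'|_H$. Hence $(N\otimes N')|_H = N|_H\otimes N'|_H$ on the nose, which a fortiori gives the claimed isomorphism.

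I do not expect any genuine obstacle here; the only point worth flagging is why the proof is so short. The ``interesting'' coproduct formula $\Delta(\rib)=(R_{21}R)^{-1}(\rib\otimes\rib)$, including its $R$-matrix correction term, plays no role in the statement: since $\rib\notin H$, restricting an $\H$-module to $H$ never evaluates $\Delta_{\H}$ on $\rib$, so the subtlety of the formal ribbon element simply does not enter. In other words, Lemma~\ref{tensors-restrict} is the purely formal observation that restriction along the Hopf subalgebra inclusion $H\incl\H$ is a strict tensor functor, isolated here because it is used repeatedly in what follows.
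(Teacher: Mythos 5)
Your proof is correct. The paper actually states this lemma without giving any proof at all, evidently treating it as immediate; your argument spells out exactly why. The key observation — that $H\hookrightarrow\H$ is a Hopf subalgebra inclusion (the comultiplication of $\H$ restricts to that of $H$ on $H\subset H\oplus H\rib$), so restriction along it is a strict monoidal functor — is precisely the point, and your remark that the formal ribbon element's twisted coproduct $\Delta(\rib)=(R_{21}R)^{-1}(\rib\otimes\rib)$ never enters the computation correctly isolates why the statement is content-free. One small stylistic note: the conclusion is an equality of $H$-modules, not merely an isomorphism, which you also observe; the paper's weaker ``$\cong$'' is just conventional phrasing.
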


    For a braided category $\cC$, let $\cC'$ denote the M\"uger center of $\cC$, defined by
    $$\cC' = \{M\in \cC~|~\beta_{M',M}\circ\beta_{M,M'}=\id_{M\otimes M'} \text{ for all }M'\in \cC\},$$
    where $\beta:\otimes\to\otimes^{\mathrm{op}}$ is the braiding on $\cC$.
    \begin{theorem}
        Suppose $H$ is a finite-dimensional quasitriangular Hopf algebra. Then, the following are equivalent for an $H$-module $M$:
        \begin{enumerate}
            \item $M\in \Rep(H)'$,
            \item $\tilde M\in \Rep(\H)'$ for some $\tilde M$ such that $\tilde M|_H\cong M$,
            \item $\tilde M\in \Rep(\H)'$ for all $\tilde M$ such that $\tilde M|_H\cong M$.
        \end{enumerate}
    \end{theorem}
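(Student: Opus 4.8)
The plan is to establish the cycle of implications $(3)\Rightarrow(2)\Rightarrow(1)\Rightarrow(3)$, with the equivalence of the double-braiding conditions being tracked through the restriction functor $\Rep(\widetilde H)\to\Rep(H)$. The key structural fact I would use is that the $R$-matrix of $\widetilde H$ is the same as that of $H$ (it lies in $H\otimes H\subset \widetilde H\otimes\widetilde H$), so for any two $\widetilde H$-modules $N,N'$ the double braiding $\beta_{N',N}\circ\beta_{N,N'}$ on $N\otimes N'$ is computed by the very same element $R_{21}R\in H\otimes H$ acting via the restricted actions. Combined with Lemma~\ref{tensors-restrict}, which identifies $(N\otimes N')|_H$ with $N|_H\otimes N'|_H$ as $H$-modules, this means: $\widetilde M$ is transparent against $N'$ in $\Rep(\widetilde H)$ if and only if $\widetilde M|_H$ is transparent against $N'|_H$ in $\Rep(H)$. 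This single observation is really the heart of the matter.

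The implication $(3)\Rightarrow(2)$ is immediate since at least one $\widetilde M$ always exists by Theorem~\ref{action-extends}. For $(2)\Rightarrow(1)$: suppose $\widetilde M\in\Rep(\widetilde H)'$ with $\widetilde M|_H\cong M$. Given any $H$-module $M'$, choose (again by Theorem~\ref{action-extends}) some $\widetilde H$-module $\widetilde{M'}$ with $\widetilde{M'}|_H\cong M'$. Since $\widetilde M$ is Müger central, $\beta_{\widetilde{M'},\widetilde M}\circ\beta_{\widetilde M,\widetilde{M'}}=\id$ on $\widetilde M\otimes\widetilde{M'}$; restricting this identity of $H$-linear (indeed $\widetilde H$-linear) maps to $H$ and using the $R$-matrix observation above together with Lemma~\ref{tensors-restrict}, we get $\beta_{M',M}\circ\beta_{M,M'}=\id$ on $M\otimes M'$. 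As $M'$ was arbitrary, $M\in\Rep(H)'$. For $(1)\Rightarrow(3)$: suppose $M\in\Rep(H)'$ and let $\widetilde M$ be any extension. For an arbitrary $\widetilde H$-module $N'$, the restriction $N'|_H$ is an $H$-module, so $\beta_{N'|_H,M}\circ\beta_{M,N'|_H}=\id$ on $M\otimes N'|_H$; pushing this back up via the $R$-matrix observation (the double braiding on $\widetilde M\otimes N'$ is controlled entirely by the $H$-actions, since $R_{21}R\in H\otimes H$) shows $\beta_{N',\widetilde M}\circ\beta_{\widetilde M,N'}=\id$ on $\widetilde M\otimes N'$, so $\widetilde M\in\Rep(\widetilde H)'$.

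The one point requiring care — and the place I would be most careful in writing the full proof — is the precise bookkeeping in the statement "the double braiding on $N\otimes N'$ depends only on the $H$-actions." One must check that $\beta_{N,N'}$ as a map $N\otimes N'\to N'\otimes N$ is literally $\tau\circ(R^{(2)}\cdot\otimes R^{(1)}\cdot)$ where the dots denote the $\widetilde H$-actions, but since $R^{(1)},R^{(2)}\in H$ these coincide with the restricted $H$-actions; then $\beta_{N',N}\circ\beta_{N,N'}$ is multiplication by $R_{21}R\in H\otimes H$ on $N|_H\otimes N'|_H$ under the identification of Lemma~\ref{tensors-restrict}. The subtlety is purely that one is comparing morphisms in two different categories via a faithful (but not full) functor, and transparency is an equality of morphisms that makes sense after restriction; I would state this comparison as a short preliminary remark or sub-lemma and then the three implications fall out in a couple of lines each. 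I do not anticipate a genuine obstacle here — the result is essentially forced once the $R$-matrix is recognized to live in the subalgebra $H$.
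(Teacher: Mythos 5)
Your proposal is correct and takes essentially the same route as the paper: the paper's proof is a one-line observation that every $H$-module admits an $\H$-extension (Theorem \ref{action-extends}) and that the $R$-matrix, lying in $H\otimes H$, acts identically before and after extension; your write-up simply spells out the three implications that the paper leaves implicit.
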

    \begin{proof}
        By Theorem \ref{action-extends}, every $H$-module has such an $\H$-module $\tilde M$, and the $R$-matrix acts identically on $M\otimes M'$ and $\tilde M\otimes \tilde M'$.
    \end{proof}
    
    For any finite-dimensional quasitriangular Hopf algebra $H$, $V_1^-$ is always in the M\"uger center of $\Rep(\tilde H)$. The ribbon category generated by $V_1^-$ is not $\sVec$. The category, which we have denoted $\Vecm$, has a nontrivial ribbon twist $\theta_{V^-}=-1$ on $V^-$ but a trivial braiding $\beta_{V^-, V^-} = \id_{V^-}$. In particular, $\Rep(\H)$ can never be a modular category nor be condensed to a modular category.  
    \begin{corollary}\label{muger-factorizable}
        If $H$ is a finite-dimensional factorizable Hopf algebra, then $\Rep(\H)'\cong \Vecm$ and is tensor generated by $V_1^-$.
    \end{corollary}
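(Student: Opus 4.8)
The plan is to read off $\Rep(\H)'$ from two facts already in hand: the preceding theorem, which transports membership in the M\"uger center between $\Rep(H)$ and $\Rep(\H)$, and the standard nondegeneracy dictionary, by which a factorizable Hopf algebra has trivial M\"uger center. The only additional work is an explicit description of the $\H$-modules whose restriction to $H$ is trivial.

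First I would note that, since $H$ is factorizable, $\Rep(H)$ is a factorizable finite tensor category, so its M\"uger center is trivial: an $H$-module lies in $\Rep(H)'$ if and only if it is isomorphic to a direct sum of copies of the unit object $V_1$ (this is one of the equivalent nondegeneracy conditions; see \cite{shimizu2019non}). Combining this with the preceding theorem --- a lift $\tilde M$ of $M$ lies in $\Rep(\H)'$ exactly when $M\in\Rep(H)'$, and every $\H$-module is a lift of its own restriction to $H$ --- yields: an $\H$-module $N$ lies in $\Rep(\H)'$ if and only if $N|_H\cong V_1^{\oplus n}$ for some $n\geq 0$.

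Next I would classify such $N$. On a module with $N|_H\cong V_1^{\oplus n}$ the subalgebra $H$ acts through $\epsilon$, and $\rib$ acts by some $A\in\Aut_\bF(\bF^{n})$; the relation $\rib^2=uS(u)$ forces $A^2=\epsilon(uS(u))\,\id=\id$, using $\epsilon(uS(u))=1$ (which follows from Proposition \ref{Drinfeld-props}, as $uS(u)^{-1}$ is grouplike). Thus $A$ is an involution, $N$ splits as an $\H$-module along the $\pm1$-eigenspaces of $A$, and the indecomposable such $N$ are precisely $V_1^+$ and $V_1^-$ from Theorem \ref{action-extends}. Conversely both lie in $\Rep(\H)'$: $V_1^+$ is the tensor unit, and $V_1^-$ lies there by the previous paragraph since $V_1\in\Rep(H)'$. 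Hence $\Rep(\H)'$ is exactly the full subcategory of all direct sums of $V_1^+$ and $V_1^-$.

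Finally I would pin down the braided (ribbon) structure. On any tensor power of $V_1^{\pm}$ the $R$-matrix acts as $\epsilon\otimes\epsilon$, so the inherited braiding is the trivial symmetry; the twist $\theta_N=\rib^{-1}\cdot-$ is $\id$ on $V_1^+$ and $-\id$ on $V_1^-$; and $\Delta(\rib)=(R_{21}R)^{-1}(\rib\otimes\rib)$ gives $V_1^-\otimes V_1^-\cong V_1^+$, so $V_1^-$ tensor-generates. These are precisely the defining data of $\Vecm$, so $\Rep(\H)'\cong\Vecm$ as ribbon categories, tensor-generated by $V_1^-$. The only genuine input is the nondegeneracy statement ``factorizable $\Rightarrow$ trivial M\"uger center''; the remainder is the eigenspace decomposition of an involution together with the bookkeeping already set up by the preceding theorem, so I do not anticipate a real obstacle.
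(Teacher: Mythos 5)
Your argument is essentially the same as the paper's: both reduce to noticing that on a M\"uger-central $N$ the subalgebra $H$ acts through $\epsilon$ (by factorizability of $H$), then decompose $N$ along the $\pm1$-eigenspaces of the extra generator and use indecomposability to conclude $N\cong V_1^+$ or $V_1^-$. The only cosmetic difference is that you decompose along $\rib$ directly while the paper decomposes along the grouplike $g=u\rib^{-1}$, which is the same operator on such $N$ since $u$ acts by $\epsilon(u)=1$.

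One small slip worth fixing: you justify $\epsilon(uS(u))=1$ by citing ``$uS(u)^{-1}$ is grouplike,'' but that fact only yields $\epsilon(uS(u)^{-1})=1$, i.e.\ $\epsilon(u)=\epsilon(S(u))$, which gives $\epsilon(uS(u))=\epsilon(u)^2$ and does not by itself force this to be $1$. The clean justification is simply $\epsilon(\rib)=1$ from Definition \ref{formal-ribbon}, so $\epsilon(uS(u))=\epsilon(\rib^2)=1$; alternatively, $\epsilon(u)=(\epsilon\otimes\epsilon)(R)=1$ from the standard $R$-matrix counit identity. Either repair closes the gap and leaves the rest of your argument intact.
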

    \begin{proof}
        Suppose $N\in\Rep(\H)'$ is indecomposable. By factorizability of $H$, $N|_H\cong V_1^{\oplus n}$ as an $H$-module for some $n\geq 0$. In particular, $h\cdot v = \epsilon(h)v$ for all $h\in H$ and $v\in N$. The action $g\cdot -$ of the finite-order grouplike $g = u\rib^{-1}$ on $N$ must be diagonalizable. Therefore, $g\cdot -$ has an eigenspace decomposition so that $N=N^+\oplus N^-$ and $g\cdot v^\pm = \pm v^\pm$ for $v^\pm\in N^\pm$. Since $H$ acts via scalar multiples of the identity, any subspace of $N|_H$ is a direct summand of $N|_H$ as an $H$-module. By indecomposability of $N$, this implies that either $N^+$ is one-dimensional and $N^-=0$ or vice versa. These correspond to $N\cong V_1^+$ or $N\cong V_1^-$ respectively. 
    \end{proof}

        \cite[Rmk. 3.1]{eno2005fusion} introduces a notion of pivotalization $\tilde\cC$ of a fusion category $\cC$. It is later shown in Prop. 5.14 that the pivotal structure on $\tilde\cC$ is spherical. We show that $\Rep(\H)\cong \widetilde{\Rep(H)}$ as braided fusion categories, when these two constructions both apply. In this sense, the formal ribbon extension is a generalization of pivotalization to the non-semisimple braided case.

        The notion of pivotalization is well-defined for any finite tensor category $\cC$ with a monoidal natural isomorphism $\Phi:\Id_{\cC}\to (-)^{****}$. By Radford's formula for $S^4$, pivotalization is, in particular, possible for representation categories of finite-dimensional Hopf algebras (i.e., finite tensor categories with a fiber functor to $\Vec$) or finite-dimensional semisimple weak Hopf algebras (i.e., fusion categories). In general, however, such a $\Phi$ need not exist. For example, a general weak Hopf algebra does not have such a formula. There is a generalization of Radford's formula to finite tensor categories \cite{ENO2004Radford}, which leads to a weaker natural isomorphism $\Phi:\Id_{\cC}\to ((-)^{****})^N$ for some $N\geq 1$, which depends on the order of the distinguished invertible object. 
        \begin{definition}
            Let $\cC$ be a finite tensor category with a monoidal natural isomorphism $\Phi:\Id_\cC\to (-)^{****}$. The pivotalization of $(\cC, \Phi)$ is the category $\tilde\cC$ with objects $(M, \phi)$ where $M\in\cC$ and $\phi:M\to M^{**}$ satisfies $\phi^{**}\circ \phi = \Phi$ and morphisms 
            $$\Hom_{\tilde\cC}((M, \phi)\to (M',\psi)):=\{f\in\Hom_{\cC}(M\to M')| \psi\circ f = f^{**}\circ \phi\}.$$ 
            We give the pivotalization a tensor category structure by 
            $$(M, \phi)\otimes (M',\psi):=(M\otimes M', \phi\otimes \psi).$$
        \end{definition}
        For a fusion category or representation category of a finite-dimensional Hopf algebra, $\Phi$ is assumed to be the Radford isomorphism induced by Radford's formula for $S^4$. In which case, we speak of the pivotalization of $\cC$ rather than $(\cC, \Phi)$.
        
        \begin{theorem}\label{pivotalization-agrees}
            If $H$ is a finite-dimensional semisimple quasitriangular Hopf algebra, then $\Rep(\H)\cong \widetilde{\Rep(H)}$ as braided fusion categories.
        \end{theorem}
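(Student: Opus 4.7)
The plan is to construct an explicit braided tensor equivalence $F \colon \Rep(\H) \to \widetilde{\Rep(H)}$. The key observation is that $\rib^{-1}u \in \H$ is a grouplike element whose square implements Radford's isomorphism. Using Proposition \ref{Drinfeld-props} together with the standard identity $\Delta(u) = (R_{21}R)^{-1}(u \otimes u)$ and the defining comultiplication of $\rib$, one computes
$$\Delta(\rib^{-1}u) = (\rib^{-1}\otimes \rib^{-1})(R_{21}R)(R_{21}R)^{-1}(u\otimes u) = (\rib^{-1}u)\otimes(\rib^{-1}u),$$
and $(\rib^{-1}u)^2 = (uS(u))^{-1}u^2 = uS(u)^{-1}$. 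Since $H$ is semisimple in characteristic $0$, the Larson--Radford theorem gives $S^2 = \id_H$; consequently $u \in Z(H)$, the canonical evaluation identifies $(-)^{**}$ with $\Id_{\Rep(H)}$, and Radford's formula $S^4(h) = u^2 h u^{-2} = (uS(u)^{-1})h(uS(u)^{-1})^{-1}$ (using centrality of $uS(u)$) shows that $\Phi_M$ is the action of the grouplike $uS(u)^{-1}$.

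Under these identifications I define $F$ on objects by $F(N) := (N|_H, \phi_N)$ with $\phi_N := (\rib^{-1}u)\cdot -$, viewed as a map $N|_H \to (N|_H)^{**}$. Centrality of $\rib$ in $\H$ and of $u$ in $H$ makes $\phi_N$ an $H$-linear automorphism, and the Radford condition $\phi_N^{**}\circ\phi_N = \Phi_{N|_H}$ collapses to $\phi_N^2 = (uS(u)^{-1})\cdot -$, which is exactly the second computation above. On morphisms, $F(f)$ is just $f$; its $\H$-linearity forces commutation with $\phi_N$ and $\phi_{N'}$, making $F$ a faithful functor. For essential surjectivity and fullness, given $(M,\phi) \in \widetilde{\Rep(H)}$ with $\phi^2 = (uS(u)^{-1})\cdot-$, I extend the $H$-action to an $\H$-action by $\rho_{\rib} := u\cdot\phi^{-1}$. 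This operator commutes with the $H$-action (both factors do) and satisfies
$$\rho_{\rib}^2 = u^2\,\phi^{-2} = u^2\,(uS(u)^{-1})^{-1} = uS(u),$$
yielding a well-defined $\H$-module structure. The two constructions are mutually inverse, and fullness follows because any $f$ commuting with $\phi_N, \phi_{N'}$ and with $H$ automatically commutes with $u\phi^{-1} = \rho_{\rib}$.

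Monoidality of $F$ is immediate: grouplikeness of $\rib^{-1}u$ gives $\phi_{N\otimes N'} = \phi_N \otimes \phi_{N'}$, and Lemma \ref{tensors-restrict} identifies $(N\otimes N')|_H$ with $N|_H \otimes N'|_H$. Preservation of the braiding is automatic since both braided structures are induced by the same $R$-matrix $R \in H\otimes H \subset \H\otimes \H$. The main obstacle I anticipate is not any single calculation but the consistent bookkeeping of the canonical identifications $M \cong M^{**} \cong M^{****}$, and in particular the precise identification $\Phi_M = (uS(u)^{-1})\cdot-$ in the quasitriangular semisimple setting; getting this alignment correct is what makes the formula $\phi_N := (\rib^{-1}u)\cdot-$ produce the intended object of $\widetilde{\Rep(H)}$.
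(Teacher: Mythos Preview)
Your approach is correct and follows the same architecture as the paper's proof: both build the equivalence by sending an $\H$-module $N$ to the pair $(N|_H,\phi_N)$ and exhibit an explicit inverse by extending an $H$-action to an $\H$-action via a prescribed $\rib$-operator. The difference is presentational. The paper defines $\phi_N$ diagrammatically as the categorical Drinfeld isomorphism precomposed with $\rib^{-1}\cdot-$, keeping the $R$-matrix explicit; you invoke Larson--Radford ($S^2=\id$, hence $u$ central and $S(u)=u$) to collapse that Drinfeld isomorphism to the canonical evaluation composed with $u\cdot-$, so that $\phi_N$ becomes simply the action of the grouplike $\rib^{-1}u$. This buys you a one-line monoidality check and a cleaner inverse, at the cost of being specific to the semisimple case, whereas the paper's formula is written in a way that visibly makes sense beyond semisimplicity. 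The one point you correctly flag as delicate---identifying the Radford isomorphism $\Phi$ with the action of $uS(u)^{-1}$---is where the argument needs the most care: the formula $S^4(h)=(uS(u)^{-1})h(uS(u)^{-1})^{-1}$ is Drinfeld's consequence of $S^2(h)=uhu^{-1}$, not Radford's $S^4$ formula in terms of the distinguished grouplikes, and it is the latter that defines $\Phi$; the two agree here because semisimplicity forces the distinguished grouplikes to be trivial and hence $uS(u)^{-1}=1$.
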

        \begin{proof}
            In this proof, we will use the fact that $\H = H\oplus H\rib^{-1}$.
            
            Given a finite-dimensional $\H$-module $N$, set $M = N|_H$. Let $\{f_i\}$ be a basis of $M^*$ and $\{f^i\}\subset M^{**}$ be the dual basis. For $x\in M$ and $g\in M^*$, define the map $\phi_N:M\to M^{**}$ by
            $$\phi_N(x)(g) := \sum_i f_i(R^{(2)}\rib^{-1} \cdot x)[S(R^{(1)})\cdot f^i](g) = \sum_i f_i(R^{(2)}\rib^{-1} \cdot x)f^i(g(S^3(R^{(1)})\cdot -)).$$

            Let $M$ be a finite-dimensional simple $H$-module and $\phi:M\to M^{**}$ an $H$-linear automorphism such that $\phi^{**}\circ\phi = d$. Let $\{e_i\}$ be a basis for $M$ and $\{e^i\}$ be the dual basis. Let $M^\phi = M$ as a vector space. For $m\in M$ and $a,b\in H$, we define the following explicit $\H$-action on $M^\phi$:
            $$(a+b\rib^{-1})\cdot m := a\cdot m + \sum_i [R^{(1)}\cdot \phi(m)](e^i)(bR^{(2)}\cdot e_i) = a\cdot m + \sum_i \phi(m)(e^i(S^2(R^{(1)})\cdot -))(bR^{(2)}\cdot e_i).$$
            Diagrammatically, the maps $\phi_N:N|_H\to (N|_H)^{**}$ and $\rib^{-1}\cdot -:M^\phi\to M^\phi$ may be written as follows:
            $$
                \phi_N = \begin{tikzpicture}[baseline=1cm]
                     \draw (0, -2) node[below] {$N|_H$} -- (0, -0.35) (-2.2, 0.35)  .. controls (-2.2, 1) and (0, 1) ..   (0, 1.35) -- (0, 3) node[above] {$(N|_H)^{**}$} (-3.4, 0.35) -- node[left] {$(N|_H)^*$} (-3.4, 1.35);
                     \draw[line width=2mm, white] (0, 0.35) .. controls (0, 1) and (-2.2, 1) ..   (-2.2, 1.35);
                     \draw (0, 0.35) node[above right] {$N|_H$} .. controls (0, 1) and (-2.2, 1) ..   (-2.2, 1.35) ;
                     \draw[rounded corners] (-1, -0.35) rectangle  node {$\rib^{-1}\cdot_N -$} (1, 0.35) (-3.9, -0.35) rectangle node {$\coev_{(N|_H)^*}$}  (-1.5, 0.35) (-3.9, 1.35) rectangle node {$\ev_{N|_H}$}  (-1.5, 2.05);
                \end{tikzpicture},\hspace{10mm}\rib^{-1}\cdot - =
                \begin{tikzpicture}[baseline=1cm]
                     \draw (0, -2) node[below] {$M^\phi$} -- (0, -0.35) (2, 0.35) .. controls (2, 1) and (0, 1) ..   (0, 1.35) -- (0, 3) node[above] {$M^\phi$} (3, 0.35) -- node[right] {$(M^\phi)^*$} (3, 1.35);
                     \draw[line width=2mm, white] (0, 0.35) .. controls (0, 1) and (2, 1) ..   (2, 1.35);
                     \draw (0, 0.35) node[above left] {$(M^\phi)^{**}$} .. controls (0, 1) and (2, 1) ..   (2, 1.35);
                     \draw[rounded corners] (0.5, -0.35) rectangle  node {$\phi$} (-0.5, 0.35) (3.5, -0.35) rectangle node {$\coev_{M^\phi}$}  (1.5, 0.35) (3.5, 1.35) rectangle node {$\ev_{(M^\phi)^*}$}  (1.5, 2.05);
                \end{tikzpicture}.
            $$
            This construction is well-known (see, for instance, \cite{Henriques2015CategorifiedTF}). As a composition of $H$-linear maps, $\phi_N$ and $\rib^{-1}\cdot -$ are $H$-linear. These constructions are inverse to each other in the sense that $\phi_{M^{\phi}} = \phi$ and $(N|_H)^{\phi_N} = N$. 

            Consider the following functors $F:\Rep(\H)\to \widetilde{\Rep(H)}$ and $G:\widetilde{\Rep(H)}\to \Rep(\H)$: for an $H$-module $M$ and $\H$-module $N$, we define
            \begin{align*}
                F(N) &= (N|_H, \phi_{N}),\\
                G((M, \phi)) &= M^\phi,
            \end{align*}
            The functors are defined on morphisms in the obvious way. Moreover, we have $G(F(N)) = N$ and $F(G((M, \phi))) = (M, \phi)$. Thus, $F$ and $G$ form a linear isomorphism of categories. By construction, $F$ and $G$ are indeed braided tensor functors. Thus, the result follows.
        \end{proof}

\section{The formal ribbon extension of doubled Nichols Hopf algebras}\label{DKn-section}

    \subsection{Doubled Nichols Hopf algebras $D\Kn$}
        Given how nicely $\H$-modules can be described in relation to $H$-modules, one might wonder if anything nontrivial is being studied here. In particular, is it always true that the fusion rules of $\Rep(\H)$ and $\Rep(H)\boxtimes\Vecm$ agree? Proposition \ref{not-deligne} provides a disproof using the doubled Nichols Hopf algebra $D\Kn$ for odd $n$, even for the full subcategories generated by just simple and projective $\RKn$-modules. We recall the study of Nichols Hopf algebras $\Kn$ and their doubles $D\Kn$ from \cite{modulardata2024}.
        
        Let $n$ be a positive integer. The Nichols Hopf algebra $\Kn$ is the $2^{n+1}$-dimensional complex unital algebra which has generators $K, \xi_1,\dots,\xi_n$ subject to the following relations: for all $i,j=1,\dots, n$,
        \begin{align*}
            K^2 &= 1, & \xi_i^2 &= 0, & \xi_i\xi_j &= -\xi_j\xi_i, & K\xi_i &= -\xi_i K.
        \end{align*}
        In particular, $\Kn$ is a crossed product $\bigwedge^* \bC^n\rtimes \bC[\bZ_2]$ of an exterior algebra on an $n$-dimensional space by the $\bZ_2$-group algebra. The algebra $\Kn$ has a natural Hopf algebra structure with the following operations: for all $i=1,\dots, n$,
        \begin{align*}
            \Delta(K) &= K\otimes K,&\epsilon(K) &= 1,&S(K)&=K,\\
            \Delta(\xi_i) &= K\otimes \xi_i + \xi_i\otimes 1,&\epsilon(\xi_i) &= 0,&S(\xi_i)&=-K\xi_i.
        \end{align*}
        Let $W = \{\xi_1^{a_1}\xi_2^{a_2}\dots\xi_n^{a_n} | a_i\in\{0,1\}\}$. Then, $\Kn$ has a natural basis given by $W\cup KW$. Set $\mathbf{I}(w) = \{i | a_i=1\}$ for $w=\xi_1^{a_1}\xi_2^{a_2}\dots\xi_n^{a_n}\in W$ and $|w| = |\mathbf{I}(w)| = \sum_{i=1}^n a_i$. By \cite[Prop. 11]{panaite1999quasitriangular}, there is a large family of $R$-matrices for $\Kn$ and each has a unique compatible ribbon element.

        The Drinfeld double $D\Kn$ of $\Kn$ is a $2^{2n+2}$-dimensional complex Hopf algebra. $D\Kn$ is generated by $K, \bar K, \xi_1,\dots,\xi_n, \bar\xi_1,\dots, \bar\xi_n$, where the sets $\{K, \xi_1, \dots, \xi_n\}$ and $\{\bar K, \bar\xi_1, \dots, \bar\xi_n\}$ each generate a copy of the Hopf algebra $\Kn$ with the operations and relations described above. Moreover, they have the following additional relations: for $i,j=1,\dots, n$,
        \begin{align*}
            K\bar K &= \bar KK, & K\bar\xi_i &= -\bar\xi_i K, & \bar K\xi_i &= -\xi_i\bar K, & \xi_i\bar\xi_j &= \delta_{i=j}(1 - K\bar K) - \bar\xi_j\xi_i.
        \end{align*}
        If we set $\overline{\xi_1^{a_1}\xi_2^{a_2}\dots\xi_n^{a_n}} = \bar\xi_1^{a_1}\bar\xi_2^{a_2}\dots\bar\xi_n^{a_n}$ for $\xi_1^{a_1}\xi_2^{a_2}\dots\xi_n^{a_n}\in W$ and set $\bar W = \{\bar w|w\in W\}$, then $W\bar W\cup KW\bar W\cup \bar KW\bar W\cup K\bar KW\bar W$ is a basis of $D\Kn$. 

        By the construction of Drinfeld doubles, $D\Kn$ has a natural $R$-matrix which makes the algebra factorizable:
        $$R = \sum_{w\in W} \frac{(-1)^{\lfloor|w|/2\rfloor}}{2}(w\otimes \bar w(1+\bar K) + Kw\otimes \bar w(1-\bar K)).$$
        When $n$ is even, $D\Kn$ has two compatible ribbon elements ${\mathbf{v}}_0$ and ${\mathbf{v}}_1$ given by
        $${\mathbf{v}}_i = \sum_{w\in W} \frac{(-1)^{\lfloor|w|/2\rfloor}}{2}((-1)^{|w|+i}(K-\bar K)+1+K\bar K)w\bar w.$$

    \subsection{A presentation of $\RKn$} 
    \begin{theorem}
        $D\Kn$ with its canonical $R$-matrix is not ribbon for odd $n$.
    \end{theorem}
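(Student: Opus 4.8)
The plan is to reduce ribbon-ness of $(D\Kn,R)$ to the triviality of the canonical grouplike $uS(u)^{-1}$, and then to settle that by computing the modular function of $\Kn$.

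First I would recall the standard reformulation of the ribbon condition: a quasitriangular Hopf algebra $(H,R)$ admits a ribbon element if and only if there is a grouplike $\ell\in G(H)$ with $S^2=\mathrm{ad}_\ell$ and $\ell^2=uS(u)^{-1}$ (by Proposition \ref{Drinfeld-props}, $uS(u)^{-1}$ is indeed grouplike); given such $\ell$, the element $\mathbf v:=u\ell^{-1}$ is a ribbon element, and conversely $\ell:=u\mathbf v^{-1}$. So the next step is to understand $G(D\Kn)$ and $S^2$. Since $\Kn$ is pointed with coradical the group algebra of $\langle K\rangle$, one has $G(\Kn)=\{1,K\}$; the algebra maps $\Kn\to\bC$ kill every $\xi_i$ and send $K\mapsto\pm1$, so $G(\Kn^*)\cong\bZ_2$ as well; hence $G(D\Kn)=G(\Kn^*)\times G(\Kn)=\{1,K,\bar K,K\bar K\}\cong\bZ_2^2$. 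On generators $S^2(\xi_i)=-\xi_i$, $S^2(\bar\xi_i)=-\bar\xi_i$, $S^2(K)=K$, $S^2(\bar K)=\bar K$, so $S^2=\mathrm{ad}_K=\mathrm{ad}_{\bar K}$; and $K\bar K$ is central whereas $K,\bar K$ are not, so the grouplikes implementing $S^2$ are precisely $\{K,\bar K\}$. As every element of $G(D\Kn)$ squares to $1$, the criterion collapses to: $(D\Kn,R)$ is ribbon if and only if $uS(u)^{-1}=1$. (One also sees $S^4=\mathrm{id}$, so $u^2$ is central and $uS(u)^{-1}=u^2(uS(u))^{-1}$ is a central grouplike, i.e. $uS(u)^{-1}\in\{1,K\bar K\}$; only the value remains in doubt.)

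It then remains to evaluate $uS(u)^{-1}$. The cleanest route is the Kauffman--Radford criterion: for the canonical $R$-matrix, $D(A)$ is ribbon if and only if the distinguished grouplike $g_A\in G(A)$ and the modular function $\alpha_A\in G(A^*)$ each admit a grouplike square root, which here (all the relevant groups having exponent $2$) forces $\Kn$ and $\Kn^*$ to be unimodular. But $\Lambda:=(1+K)\xi_1\cdots\xi_n$ is a nonzero left integral of $\Kn$ (indeed $K\Lambda=\Lambda$ and $\xi_j\Lambda=0$), and $\Lambda K=(-1)^n\Lambda$ after moving $K$ past the $n$ anticommuting factors $\xi_i$; hence $\alpha_{\Kn}(K)=(-1)^n$. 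For odd $n$ this is $-1$, so $\Kn$ is not unimodular, the criterion fails, and $D\Kn$ is not ribbon --- equivalently $uS(u)^{-1}=K\bar K\neq1$. If one prefers an argument internal to the explicit presentation, one instead substitutes the displayed $R$-matrix of $D\Kn$ into $u=S(R^{(2)})R^{(1)}$, computes $S(u)$ the same way, and checks $uS(u)^{-1}=(K\bar K)^n$ by the same sign count.

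The real work is in this last step: the reduction above is formal, but pinning down $uS(u)^{-1}$ requires either quoting Kauffman--Radford correctly and computing the integral of $\Kn$, or else carrying out the sign bookkeeping for $S$ on products of the anticommuting $\xi_i$ inside $u=S(R^{(2)})R^{(1)}$; I expect that bookkeeping to be the only delicate point. As a consistency check, for even $n$ the same integral gives $\alpha_{\Kn}=\epsilon$ and, via Radford's $S^4$ formula, $g_{\Kn}=1$, so $D\Kn$ is ribbon with exactly the two ribbon elements $uK^{-1}$ and $u\bar K^{-1}$ (differing by the central grouplike $K\bar K$), matching the pair $\mathbf v_0,\mathbf v_1$ above.
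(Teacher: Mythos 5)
Your argument is correct and uses the same key tool as the paper, namely Kauffman--Radford's criterion for ribbonness of a Drinfeld double, but you invoke a different half of it. The paper checks the condition on the distinguished grouplike $g_{\Kn}\in G(\Kn)$: it asserts (without proof) that $g_{\Kn}=K$ for odd $n$, enumerates $G(\Kn)=\{1,K\}$, and observes that neither element squares to $K$. You instead check the condition on the modular function $\alpha_{\Kn}\in G(\Kn^*)$, and your route is in fact a bit more self-contained: you exhibit the left integral $\Lambda=(1+K)\xi_1\cdots\xi_n$ explicitly and read off $\alpha_{\Kn}(K)=(-1)^n$, whereas the paper does not justify its identification of the distinguished grouplikes. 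Both are valid because Kauffman--Radford requires grouplike square roots on \emph{both} sides; failing either one suffices. Your opening reformulation, that ribbonness of $(D\Kn,R)$ is equivalent to $uS(u)^{-1}=1$ once one knows $G(D\Kn)\cong\bZ_2^2$ and that $S^2$ is implemented by $K$ and $\bar K$, is a clean observation not spelled out in the paper, and it connects nicely with the later Lemma \ref{Drinfeld-calculations}, which computes $uS(u)^{-1}=(K\bar K)^n$ directly from the $R$-matrix; your ``sign-bookkeeping'' fallback is essentially that computation. The consistency check for even $n$ likewise matches the paper's explicit ribbon elements $\mathbf{v}_0,\mathbf{v}_1$.
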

    \begin{proof}
    	Note that, in $\Kn$, $S^{4} = \id_{\Kn}$. When $n$ is odd, the distinguished grouplikes in $\Kn$ and $\Kn^*$ are $K$ and $\bar K$ respectively. By \cite[Thm. 3]{kauffman1993necessary}, it suffices to show that $K\in\Kn$ has no grouplike square-root. We show that $1$ and $K$ are the only grouplikes in $\Kn$. Suppose $x = \sum_{w\in W} (c_{1}(w)1 + c_{K}(w)K)w\in \Kn$ is a grouplike. Then, the following two equalities hold: 
    	\begin{align*}
    		\Delta(x) &= \sum_{w_{1},w_{2}\in W} (c_{1}(w_{1})1 + c_{K}(w_{1})K)w_{1}\otimes(c_{1}(w_{2})1 + c_{K}(w_{2})K)w_{2},\\
    		&= \sum_{w\in W}(c_{1}(w)1\otimes 1 + c_{K}(w)K\otimes K)\prod_{i\in \mathbf{I}(w)} (K\otimes \xi_i + \xi_i\otimes 1).
    	\end{align*}
        Note that in the first equality, for any $w\in W$, there is a term of the form $c_1(w)^2 w\otimes w$. However, in the second equality, every summand of the form $w_1\otimes w_2$ necessarily satisfies $\mathbf{I}(w_1)\cap \mathbf{I}(w_2)=\varnothing$. Thus, $c_1(w) = 0$ for any $w\neq 1$. Similarly, $c_K(w)=0$ for any $w\neq 1$. It is straightforward from here to see that only $1$ and $K$ are grouplikes for $\Kn$. In particular, neither is a square root of $K$.
    \end{proof}

    The following lemma provides a formula for converting between sums of the form $\sum_{w\in W} h_w\bar w w$ where $h_w$ depends only on the length of $w$ to sums of the form $\sum_{w\in W} k_w w\bar w$. Sums of this form are extremely common in our calculations. The lemma is symmetric, in that swapping all $w$'s with $\bar w$'s (and vice versa) does not change the validity of the formulas.
    
    \begin{lemma}\label{sum-by-length}
        Let $f:\{0, 1,\dots, n\}\to D\Kn$ be any function. Then,
        \begin{align*}
            \sum_{w\in W} f(|w|)\bar w w&=\sum_{w\in W} \left((-1)^{|w|}f(|w|) + \right.\\&\left.+ (-1)^{\lfloor (|w|+1)/2\rfloor}\sum_{\ell=1}^{n-|w|} \binom{n-|w|}{\ell}(-1)^{\lfloor (\ell+|w|)/2\rfloor}2^{\ell-1}f(\ell+|w|)(1 - K\bar K)\right)w\bar w.
        \end{align*}
        In particular,
        \begin{align*}
            \sum_{w\in W} f(|w|)(1+K\bar K)\bar w w&=\sum_{w\in W} (-1)^{|w|}f(|w|)(1+K\bar K)w\bar w,\\
            \sum_{w\in W} f(|w|)(1-K\bar K)\bar w w&=\sum_{w\in W}(-1)^{\lfloor (|w|+1)/2\rfloor}\sum_{\ell=0}^{n-|w|} \binom{n-|w|}{\ell}(-1)^{\lfloor (\ell+|w|)/2\rfloor}2^{\ell}f(\ell+|w|)(1 - K\bar K)w\bar w.
        \end{align*}
    \end{lemma}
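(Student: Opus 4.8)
The plan is to first normal-order a single product $\bar w w$ with respect to the basis $W\bar W\cup KW\bar W\cup \bar KW\bar W\cup K\bar KW\bar W$ of $D\Kn$, and then to interchange the order of summation. Set $c:=1-K\bar K$. From $K\xi_i=-\xi_iK$, $\bar K\xi_i=-\xi_i\bar K$ (and the analogous relations for $\bar\xi_i$) one gets $K\bar K\,\xi_i=\xi_i\,K\bar K$ and $K\bar K\,\bar\xi_i=\bar\xi_i\,K\bar K$, so $c$ commutes with every $\xi_i$ and $\bar\xi_i$; since $(K\bar K)^2=1$ we have $c^2=2c$, hence $c^m=2^{m-1}c$ for $m\geq1$ and $c\cdot c^m=2^m c$ for $m\geq0$. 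Finally, $\xi_i\bar\xi_i=(1-K\bar K)-\bar\xi_i\xi_i$ reads $\bar\xi_i\xi_i=c-\xi_i\bar\xi_i$.

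Now fix $w\in W$ with $\mathbf{I}(w)=S=\{i_1<\cdots<i_k\}$, so $\bar w w=\bar\xi_{i_1}\cdots\bar\xi_{i_k}\xi_{i_1}\cdots\xi_{i_k}$. Using $\xi_a\bar\xi_b=-\bar\xi_b\xi_a$ for $a\neq b$, bringing each $\xi_{i_a}$ leftward next to $\bar\xi_{i_a}$ introduces $\binom{k}{2}$ sign changes in total, giving $\bar w w=(-1)^{\binom{k}{2}}\prod_{a=1}^{k}(\bar\xi_{i_a}\xi_{i_a})$, and the factors $\bar\xi_{i_a}\xi_{i_a}$ commute pairwise. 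Substituting $\bar\xi_{i_a}\xi_{i_a}=c-\xi_{i_a}\bar\xi_{i_a}$, expanding the product, using that the $\xi_{i_a}\bar\xi_{i_a}$ also commute pairwise, and checking that $\prod_{a\in J}\xi_{i_a}\bar\xi_{i_a}=(-1)^{\binom{|J|}{2}}w_J\bar w_J$ (where $w_J\in W$ is the basis element with $\mathbf{I}(w_J)=J$ and $\bar w_J$ its barred version), I obtain
$$\bar w w=(-1)^{\binom{k}{2}}\sum_{J\subseteq S}(-1)^{|J|+\binom{|J|}{2}}\,c^{\,k-|J|}\,w_J\bar w_J.$$
The congruences $\binom{m}{2}\equiv\lfloor m/2\rfloor$ and $|J|+\binom{|J|}{2}=\binom{|J|+1}{2}\equiv\lfloor(|J|+1)/2\rfloor\pmod 2$ rewrite this as $\bar w w=(-1)^{\lfloor k/2\rfloor}\sum_{J\subseteq S}(-1)^{\lfloor(|J|+1)/2\rfloor}c^{\,k-|J|}w_J\bar w_J$.

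Next I left-multiply by $f(|w|)$, sum over $w\in W$, and interchange the two sums: a pair $(w,J)$ with $J\subseteq\mathbf{I}(w)$ is recorded by the pair $(w_J,w)$, and for a fixed $w'\in W$ the admissible $w$ are exactly those with $\mathbf{I}(w)=\mathbf{I}(w')\sqcup L$ for some $L\subseteq\{1,\dots,n\}\setminus\mathbf{I}(w')$, of which there are $\binom{n-|w'|}{\ell}$ with $|L|=\ell$, in which case $|w|=|w'|+\ell$. Since $f(|w|)$ remains to the left of $c^{\,k-|J|}$ throughout, the coefficient of $w'\bar w'$ comes out as $\sum_{\ell=0}^{n-|w'|}\binom{n-|w'|}{\ell}(-1)^{\lfloor(|w'|+\ell)/2\rfloor+\lfloor(|w'|+1)/2\rfloor}f(|w'|+\ell)\,c^{\ell}$. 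Splitting off $\ell=0$ (whose sign is $(-1)^{|w'|}$ because $\lfloor m/2\rfloor+\lfloor(m+1)/2\rfloor=m$, and with $c^0=1$) from the terms $\ell\geq1$ (where $c^{\ell}=2^{\ell-1}c$) gives exactly the asserted identity. The two ``in particular'' statements then follow by applying this identity with $f$ replaced by $\tilde f(m):=f(m)(1+K\bar K)$ and by $\tilde f(m):=f(m)(1-K\bar K)$ respectively, which is legitimate since in those statements $1\pm K\bar K$ stands immediately to the left of $\bar w w$: one uses $(1+K\bar K)(1-K\bar K)=0$ to kill the $\ell\geq1$ sum in the first case, and $(1-K\bar K)^2=2(1-K\bar K)$ to absorb the $\ell=0$ term into the $\ell\geq1$ sum (promoting $2^{\ell-1}$ to $2^{\ell}$ and lowering the summation limit to $\ell=0$) in the second.

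I expect the sign bookkeeping to be the only genuine obstacle: confirming the $(-1)^{\binom{k}{2}}$ counts for the two reorderings and the parity identities $\binom{m}{2}\equiv\lfloor m/2\rfloor$, $\binom{m+1}{2}\equiv\lfloor(m+1)/2\rfloor\pmod2$. The reindexing $(w,J)\mapsto(w_J,w)$ is a routine bijection onto $\{(w',w):\mathbf{I}(w')\subseteq\mathbf{I}(w)\}$, and the fact that $f$ takes values in $D\Kn$ rather than in the ground field causes no difficulty, since $f(|w|)$ merely left-multiplies an identity established entirely inside $D\Kn$, while $1\pm K\bar K$ commutes past every $\xi_i$ and $\bar\xi_i$.
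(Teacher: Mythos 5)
Your proof is correct and follows essentially the same route as the paper: establish the normal-ordering formula $\bar w w=(-1)^{\lfloor |w|/2\rfloor}\sum_{J\subseteq \mathbf{I}(w)}(-1)^{\lfloor(|J|+1)/2\rfloor}(1-K\bar K)^{|w|-|J|}w_J\bar w_J$, substitute, swap the order of summation with the binomial count $\binom{n-|w'|}{\ell}$, and use $(1-K\bar K)^\ell=2^{\ell-1}(1-K\bar K)$ for $\ell\geq 1$. The only difference is that you supply a derivation of the $\bar w w$ formula, which the paper merely asserts.
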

    Note: the index of $\ell$ starts at 1 in the general case, but 0 in the case when the sum has a $(1-K\bar K)$-coefficient.
    \begin{proof}
        First, we note the following formula for any $w\in W$:
        $$\bar ww = (-1)^{\lfloor |w|/2\rfloor}\sum_{\mathbf{I}(w')\subseteq \mathbf{I}(w)} (-1)^{\lfloor (|w'|+1)/2\rfloor}(1 - K\bar K)^{|w|-|w'|} w'\bar w'.$$
        Then,
        \begin{align*}
            \sum_{w\in W} f(|w|)\bar w w &= \sum_{w\in W} f(|w|)(-1)^{\lfloor |w|/2\rfloor}\sum_{\mathbf{I}(w')\subseteq \mathbf{I}(w)} (-1)^{\lfloor (|w'|+1)/2\rfloor}(1 - K\bar K)^{|w|-|w'|} w'\bar w'
            \intertext{Note that there are $\binom{n-|w'|}{\ell}$ choices of $w\in W$ so that $\mathbf{I}(w')\subsetneq \mathbf{I}(w)$ so that $\ell+|w'| = |w|$. Thus, by indexing the sum by $\ell$ and relabeling $w'$ by $w$, we obtain the following:}
            &= \sum_{w\in W} (-1)^{\lfloor (|w|+1)/2\rfloor}\sum_{\ell=0}^{n-|w|} \binom{n-|w|}{\ell}(-1)^{\lfloor (\ell+|w|)/2\rfloor}f(\ell+|w|)(1 - K\bar K)^{\ell} w\bar w.
        \end{align*}
        Finally, if $\ell > 0$, then $(1 - K\bar K)^{\ell} = 2^{\ell-1}(1 - K\bar K)$, so the sum can be broken into the two sums given in the lemma statement. The two special cases follow directly.
    \end{proof}

    \begin{lemma}\label{Drinfeld-calculations}
    Let $n$ be a positive integer. Then, the Drinfeld element $u\in D\Kn$ satisfies
    \begin{align*}
        u &= \sum_{w\in W} \frac{(-1)^{\lfloor|w|/2\rfloor}}{2}((-1)^{n+|w|}(1-K\bar K)+K+\bar K)w\bar w,\\
        u^{-1} &= \sum_{w\in W} \frac{(-1)^{\lfloor(|w|+1)/2\rfloor}}{2}((-1)^{n}(1-K\bar K)+K+\bar K)w\bar w,\\
        S(u) &= \sum_{w\in W} \frac{(-1)^{\lfloor|w|/2\rfloor}}{2}((-1)^{|w|}(1-K\bar K)+K+\bar K)w\bar w,\\
        uS(u) &= \frac{(-1)^{n}}{2}(1 - K\bar K) + (1+K\bar K)\sum_{w\in W}(-1)^{\lfloor|w|/2\rfloor}2^{|w|-1}w\bar w,\\
        uS(u)^{-1} &= (K\bar K)^n.
    \end{align*}
    \end{lemma}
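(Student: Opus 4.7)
My plan is a direct calculation: I would substitute the explicit formula for the $R$-matrix of $D\Kn$ into the definitions $u = S(R^{(2)})R^{(1)}$ and $u^{-1} = R^{(2)}S^2(R^{(1)})$ (using Proposition \ref{Drinfeld-props}), simplify the resulting sums of $\bar w w$-terms using the relations of $D\Kn$, and finally invoke Lemma \ref{sum-by-length} to rewrite everything in $w\bar w$-form. The formula for $S(u)$ is then obtained by applying the antipode termwise to the formula for $u$, and the formulas for $uS(u)$ and $uS(u)^{-1}$ follow by exploiting orthogonality among the elements $(1\pm K\bar K)$ and $(K+\bar K)$.

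As preparation, I would first establish $S(w) = (-1)^{|w|}K^{|w|}w$ and $S(\bar w) = (-1)^{|w|}\bar K^{|w|}\bar w$ by induction on $|w|$, together with $S^2(\xi_i) = -\xi_i$ and $S^2(K) = K$, and record the commutations $\bar w K = (-1)^{|w|}K\bar w$, $\bar K w = (-1)^{|w|}w\bar K$, $K \cdot \bar w w = \bar w w \cdot K$, $\bar K \cdot \bar w w = \bar w w \cdot \bar K$, together with the absorption identities $(1+\bar K)\bar K^{|w|} = 1+\bar K$ and $(1-\bar K)\bar K^{|w|} = (-1)^{|w|}(1-\bar K)$. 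With these, a direct expansion reduces $u$ to $\sum_w \tfrac{(-1)^{\lfloor|w|/2\rfloor}(-1)^{|w|}}{2}\bar w w\bigl[(1-K\bar K)+(K+\bar K)\bigr]$, where the inner bracket comes from $(1+\bar K)+(1-\bar K)K = (1-K\bar K)+(K+\bar K)$. Applying Lemma \ref{sum-by-length} to each piece separately then yields the claimed formula for $u$: the second special case handles the $(1-K\bar K)$-part, where the binomial identity $\sum_\ell \binom{n-|w|}{\ell}(-1)^\ell 2^\ell = (-1)^{n-|w|}$ produces the sign $(-1)^{n+|w|}$, and the first special case handles the $(K+\bar K)$-part after noting $(K+\bar K) = \tfrac{1}{2}(1+K\bar K)(K+\bar K)$. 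The formula for $u^{-1}$ follows by the same method, with $S^2(w) = (-1)^{|w|}w$ in place of $S(w)$, and $S(u)$ is obtained by applying $S$ termwise to $u$ (using $S(w\bar w) = (-1)^{|w|}(K\bar K)^{|w|}\bar w w$) and reconverting with Lemma \ref{sum-by-length}.

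For the last two identities, I would exploit the orthogonality relations $(1-K\bar K)(1+K\bar K) = 0$ and $(1-K\bar K)(K+\bar K) = 0$, both immediate from $K^2 = \bar K^2 = 1$. Writing $u = (-1)^n A' + B$ and $S(u) = A' + B$, where $A'$ carries the $(1-K\bar K)$-coefficient and $B$ carries the $(K+\bar K)$-coefficient, all cross-terms in the product vanish, giving $uS(u) = (-1)^n (A')^2 + B^2$. A double-sum expansion using $(K+\bar K)^2 = 2(1+K\bar K)$ and $(1-K\bar K)^2 = 2(1-K\bar K)$, followed by a final application of Lemma \ref{sum-by-length}, reduces $B^2$ to the claimed $(1+K\bar K)\sum_w(-1)^{\lfloor|w|/2\rfloor}2^{|w|-1}w\bar w$ and collapses $(A')^2$ to the scalar multiple $\tfrac{1}{2}(1-K\bar K)$; the latter rests on the identity $(\xi_i\bar\xi_i)^2 = (1-K\bar K)\xi_i\bar\xi_i$, derived from $\xi_i\bar\xi_i+\bar\xi_i\xi_i = 1-K\bar K$ and $\xi_i^2 = 0$, which forces $(1-K\bar K)(P^2-1) = 0$ where $A' = \tfrac{1-K\bar K}{2}P$. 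Finally, $uS(u)^{-1} = (K\bar K)^n$ follows by direct verification that $(K\bar K)^n S(u) = u$: for $n$ even this is trivial since $u = S(u)$ and $(K\bar K)^n = 1$, while for $n$ odd it uses $K\bar K(1-K\bar K) = -(1-K\bar K)$ and $K\bar K(K+\bar K) = K+\bar K$.

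The main obstacle will be the double-sum expansions of $B^2$ and $(A')^2$: while the orthogonality relations dramatically prune the cross-terms, the remaining combinatorics of pairing up indices in $W\times W$ and evaluating the resulting binomial-type sums still require careful bookkeeping. However, the structural parallels between the formulas for $u$, $u^{-1}$, and $S(u)$ suggest that once the computation of $u$ is carried out in detail, the other identities reduce to bookkeeping variations on the same argument.
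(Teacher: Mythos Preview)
Your proposal is correct and follows essentially the same route as the paper: compute $u$ and $u^{-1}$ from the explicit $R$-matrix via $S(R^{(2)})R^{(1)}$ and $R^{(2)}S^2(R^{(1)})$, convert the resulting $\bar w w$-sums to $w\bar w$-form with Lemma~\ref{sum-by-length} (the paper uses the identical binomial collapse $\sum_\ell\binom{n-|w|}{\ell}(-2)^\ell=(-1)^{n-|w|}$), obtain $S(u)$ by applying $S$ termwise, and deduce $uS(u)^{-1}=(K\bar K)^n$ from $(K\bar K)^nS(u)=u$. The only organizational difference is in the $uS(u)$ step: the paper rewrites $u$ and $S(u)$ in terms of $\hat w=\prod_{i\in\mathbf I(w)}\xi_i\bar\xi_i$ and expands the double sum combinatorially, whereas you split $u=(-1)^nA'+B$, $S(u)=A'+B$ by orthogonality of $(1-K\bar K)$ and $(K+\bar K)$ and compute $(A')^2$ and $B^2$ separately; these are two presentations of the same calculation (indeed the paper's bracket simplification $((-1)^{|w_1|+n}(1-K\bar K)+K+\bar K)((-1)^{|w_2|}(1-K\bar K)+K+\bar K)=2((-1)^{|w_1|+|w_2|+n}(1-K\bar K)+1+K\bar K)$ is exactly your orthogonality), and your factorization hint via $(\xi_i\bar\xi_i)^2=(1-K\bar K)\xi_i\bar\xi_i$ can be made precise by writing $P=\prod_i(1-\xi_i\bar\xi_i)$, which makes $(1-K\bar K)(P^2-1)=0$ immediate.
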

    We see that $S(u)=u$ only when $n$ is even, wherein $uK$ and $u\bar K$ are both ribbon elements in $D\Kn$. When $n$ is odd, $K$ and $\bar K$ still implement the antipode, but it is no longer the case that $uK$ and $u\bar K$ are ribbon elements since $\bar K^2\neq uS(u)^{-1}\neq K^2$.
    \begin{proof}
        By definition,
        \begin{align}
            u &= \sum_{w\in W} \frac{(-1)^{\lfloor|w|/2\rfloor}}{2}( (1+\bar K)(-\bar K)^{|w|}\bar w w + (1-\bar K)(-\bar K)^{|w|}\bar w Kw)\nonumber\\ 
            &= \sum_{w\in W} \frac{(-1)^{\lfloor(|w|+1)/2\rfloor}}{2}(1+K+\bar K-K\bar K)\bar w w.\label{u-barww}
        \end{align}
        By applying Lemma \ref{sum-by-length} to the functions $f(k) = \frac{(-1)^{\lfloor (k+1)/2\rfloor}}{2}K(1+K\bar K)$ and $f(k) = \frac{(-1)^{\lfloor (k+1)/2\rfloor}}{2}(1 - K\bar K)$, we get, by the binomial theorem,
        \begin{align*}
            &=\sum_{w\in W} \frac{(-1)^{\lfloor|w|/2\rfloor}}{2}(K+\bar K)w\bar w +\sum_{w\in W}(-1)^{\lfloor (|w|+1)/2\rfloor}\sum_{\ell=0}^{n-|w|} \binom{n-|w|}{\ell}(-1)^{\ell+|w|}2^{\ell-1}(1 - K\bar K)w\bar w,\\
            &= \sum_{w\in W} \frac{(-1)^{\lfloor|w|/2\rfloor}}{2}((-1)^{n+|w|}(1-K\bar K) + K + \bar K)w\bar w.
        \end{align*}
        By Proposition \ref{Drinfeld-props}, we see that
        \begin{align*}
            u^{-1} &= \sum_{w\in W} \frac{(-1)^{\lfloor|w|/2\rfloor}}{2}(\bar w(1+\bar K)S^2(w) + \bar w (1-\bar K)S^2(Kw)),\\
             &= \sum_{w\in W} \frac{(-1)^{\lfloor|w|/2\rfloor}}{2}(((-1)^{|w|}(1-K\bar K)+K+\bar K)\bar ww).
             \intertext{Again, we apply the two special cases of Lemma \ref{sum-by-length} to get}
             &= \sum_{w\in W} \frac{(-1)^{\lfloor (|w|+1)/2\rfloor}}{2}\left((K+\bar K) + \sum_{\ell=0}^{n-|w|} \binom{n-|w|}{\ell}(-1)^{\ell + |w|}2^\ell (1-K\bar K))\right)w\bar w,\\
             &= \sum_{w\in W} \frac{(-1)^{\lfloor (|w|+1)/2\rfloor}}{2}\left((K+\bar K) + (-1)^{n} (1-K\bar K))\right)w\bar w.
        \end{align*}
        Now, we compute $S(u)$ using Equation \eqref{u-barww}:
        \begin{align*}
            S(u) &= \sum_{w\in W} \frac{(-1)^{\lfloor(|w|+1)/2\rfloor}}{2}(1+K+\bar K-K\bar K)(-K)^{|w|}w(-\bar K)^{|w|}\bar w,\\
            &= \sum_{w\in W} \frac{(-1)^{\lfloor|w|/2\rfloor}}{2}((-1)^{|w|}(1-K\bar K)+K+\bar K)w\bar w.
        \end{align*}
        Next, we compute the product $uS(u)$. First, let $\hat w = \prod_{i\in\mathbf{I}(w)} \xi_i\bar\xi_i$. Note that $\xi_i\bar\xi_i\xi_j\bar\xi_j = \xi_j\bar\xi_j\xi_i\bar\xi_i$ for $i\neq j$, so this product does not need a specified ordering. Then, we can write $u$ and $S(u)$ as
        \begin{align*}
            u &= \sum_{w\in W} \frac{1}{2}((-1)^{n+|w|}(1-K\bar K)+K+\bar K)\hat w,\\
            S(u) &= \sum_{w\in W} \frac{1}{2}((-1)^{|w|}(1-K\bar K)+K+\bar K)\hat w.
        \end{align*}
        We compute $uS(u)$ as follows:
        \begin{align*}
            uS(u) &= \sum_{w_1,w_2\in W} \frac{1}{4}((-1)^{|w_1|+n}(1-K\bar K)+K+\bar K)\hat w_1((-1)^{|w_2|}(1-K\bar K)+K+\bar K)\hat w_2,\\
            &= \sum_{w_1,w_2\in W} \frac{1}{2}((-1)^{|w_1|+|w_2|+n}(1 - K\bar K) + 1 + K\bar K)\prod_{i\in \mathbf{I}(w_1)\cap \mathbf{I}(w_2)}\xi_i\bar \xi_i\xi_i\bar \xi_i\prod_{i\in \mathbf{I}(w_1)\vartriangle \mathbf{I}(w_2)} \xi_i\bar \xi_i,\\
            &= \sum_{w_1,w_2\in W} \frac{1}{2}((-1)^{|w_1|+|w_2|+n}(1 - K\bar K) + 1 + K\bar K)(1 - K\bar K)^{|\mathbf{I}(w_1)\cap \mathbf{I}(w_2)|}\hat{\mathbf{w}}(\mathbf{I}(w_1)\cup \mathbf{I}(w_2)),
        \end{align*}
        where $\vartriangle$ denotes symmetric difference and, if $I\subseteq \{1,\dots, n\}$, then $\hat{\mathbf{w}}(I)=(\xi_1\bar\xi_1)^{a_1}\dots(\xi_n\bar\xi_n)^{a_n}$, where $a_i = 1$ if $i\in I$ and $a_i = 0$ otherwise. We can reindex the sum to take advantage of the $(1-K\bar K)$ multiplier as follows:
        \begin{align*}
            &= \sum_{\mathbf{I}(w_1)\cap \mathbf{I}(w_2)=\varnothing} \frac{1}{2}(K + \bar K)\hat w_1\hat w_2 + \sum_{w_1, w_2\in W} 2^{|\mathbf{I}(w_1)\cap \mathbf{I}(w_2)|-1}(-1)^{|w_1|+|w_2|+n}(1 - K\bar K)\hat{\mathbf{w}}(\mathbf{I}(w_1)\cup \mathbf{I}(w_2)).
            \intertext{Given $w\in W$, consider all summands of the form $(1-K\bar K)\hat w$. For each $\ell_1=0,\dots,|w|$, there are $\binom{|w|}{\ell_1}$ different $w_1\in W$ such that $\mathbf{I}(w_1)\subseteq \mathbf{I}(w)$ and $|w_1|=\ell_1$. Fix such a $w_1$. Then, for each $\ell_2=|w|-\ell_1,\dots,|w|$, there are $\binom{\ell_1}{\ell_1+\ell_2-|w|}$ different $w_2\in W$ with $|w_2|=\ell_2$ and $\hat{\mathbf{w}}(\mathbf{I}(w_1)\cup \mathbf{I}(w_2)) = \hat w$. In this case $|\mathbf{I}(w_1)\cap \mathbf{I}(w_2)|=\ell_1+\ell_2-|w|$. Next, we consider all summands of the form $(1+K\bar K)\hat w$. For fixed $w_1\subseteq \mathbf{I}(w)$, there is exactly one $w_2$ such that $(K+\bar K)\hat w_1\hat w_2=\hat w$. There are $2^{|w|}$ such $w_1$. Moreover, the summand $\frac{1}{2}(1 + K\bar K)\hat w_1\hat w_2$ depends only on $w$. Indexing by $m=|w|-\ell_2$, the above sum can be rewritten as}
            &= \sum_{w\in W}2^{|w|-1}(1 + K\bar K)\hat w + \sum_{w\in W}\sum_{\ell_1=0}^{|w|}\sum_{m=0}^{\ell_1}\binom{|w|}{\ell_1}\binom{\ell_1}{m}2^{\ell_1-m-1}(-1)^{\ell_1+|w|-m+n}(1 - K\bar K)\hat{w}\\
            &= \sum_{w\in W}2^{|w|-1}(1 + K\bar K)\hat w + \sum_{w\in W}\sum_{\ell_1=0}^{|w|}\frac{\binom{|w|}{\ell_1}(-1)^{\ell_1+|w|+n}}{2}(1 - K\bar K)\hat{w}\\
            &= \sum_{w\in W}2^{|w|-1}(1 + K\bar K)\hat w + \frac{(-1)^{n}}{2}(1-K\bar K).
        \end{align*}
        The equation for $uS(u)$ follows. Finally, note that 
        $$(K\bar K)^n ((-1)^\ell(1-K\bar K) + K + \bar K) = (-1)^{n+\ell}(1-K\bar K)+K+\bar K,$$ 
        so $u = (K\bar K)^n S(u)$, and the last equation follows.
    \end{proof}
    
    By Theorem \ref{sommer}, for odd $n$, the ribbon extension $\RKn$ is obtained by formally adding a grouplike $\tilde k$ to $D\Kn$ such that $\tilde k^2 = K\bar K$ and $S^2(x) = \tilde kx\tilde k^{-1}$. The latter is equivalent to requiring that $\tilde k$ commutes with $K, \bar K$ and anticommutes with each $\xi_i, \bar\xi_i$. Note that since $\bar K = K\tilde k^2$, it need not be included in the generators. Thus, we can provide a simple presentation of $\RKn$. 
    \begin{corollary}
        Let $n$ be odd. The Hopf algebra $\RKn$ is generated (as an algebra) by the elements $K, \tilde k, \xi_1, \dots, \xi_n, \bar\xi_1,\dots,\bar\xi_n$ which satisfy the following relations: for $i\neq j$,
        \begin{align*}
            K^2 &= 1, & \xi_i^2 &= 0, & \xi_i\xi_j &= -\xi_j\xi_i, & K\xi_i &= -\xi_i K,\\
            \tilde k^4 &= 1, & \bar\xi_i^2 &= 0, & \bar\xi_i\bar\xi_j &= -\bar\xi_j\bar\xi_i, & \tilde k\bar\xi_i &= -\bar\xi_i \tilde k,\\
            K\tilde k &= \tilde k K, & \bar\xi_i\xi_i &= 1 - \tilde k^2 - \xi_i\bar\xi_i, & \xi_i\bar\xi_j &= -\bar\xi_j\xi_i, & K\bar\xi_i&=-\bar\xi_i K, & \tilde k\xi_i &= -\xi_i \tilde k.
        \end{align*}
    \end{corollary}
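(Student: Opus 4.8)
The plan is to read off the presentation from Sommerh\"auser's decomposition (Theorem~\ref{sommer}) together with the formulas of Lemma~\ref{Drinfeld-calculations}. For $H=D\Kn$ with $n$ odd, Lemma~\ref{Drinfeld-calculations} gives $uS(u)^{-1}=(K\bar K)^{n}=K\bar K$ (since $(K\bar K)^{2}=1$), so Theorem~\ref{sommer} presents $\RKn$ as $D\Kn\#_{\sigma}\bF[\bZ_2]$ with weak action $g\cdot h=S^{2}(h)$ and cocycle $\sigma(g\otimes g)=K\bar K$. Writing $\tilde k$ for the image of $1\#g$ (equivalently, the grouplike $u\rib^{-1}$ already used in the proof of Corollary~\ref{muger-factorizable}), the tensor-product coalgebra structure on a cocycled crossed product makes $\tilde k$ grouplike, and the crossed-product multiplication yields $\tilde k^{2}=\sigma(g\otimes g)=K\bar K$ and $\tilde k h=S^{2}(h)\tilde k$ for all $h\in D\Kn$, i.e.\ conjugation by $\tilde k$ implements $S^{2}$. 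Since $K^{2}=1$, we get $\bar K=K\tilde k^{2}$, so $\bar K$ is redundant as a generator, and $K\bar K=\tilde k^{2}$, $\tilde k^{4}=(K\bar K)^{2}=1$.

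Next I would compute $S^{2}$ on the generators of $D\Kn$: it fixes $K$ and $\bar K$, and from $S(\xi_i)=-K\xi_i$, $S(\bar\xi_i)=-\bar K\bar\xi_i$ one gets $S^{2}(\xi_i)=-\xi_i$ and $S^{2}(\bar\xi_i)=-\bar\xi_i$. Hence $\tilde k$ commutes with $K$ and anticommutes with every $\xi_i$ and $\bar\xi_i$. Substituting $\bar K=K\tilde k^{2}$ and $K\bar K=\tilde k^{2}$ into the defining relations of $D\Kn$ that mention $\bar K$ turns $K\bar K=\bar K K$, $\bar K\xi_i=-\xi_i\bar K$, $K\bar\xi_i=-\bar\xi_i K$, and $\xi_i\bar\xi_j=\delta_{ij}(1-K\bar K)-\bar\xi_j\xi_i$ into exactly the relations in the statement, with $\bar\xi_i\xi_i=1-\tilde k^{2}-\xi_i\bar\xi_i$ the diagonal case. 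All listed relations therefore hold in $\RKn$, so there is a surjective algebra homomorphism $A\to\RKn$ from the algebra $A$ abstractly presented by these generators and relations.

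To finish, it suffices to show $\dim_{\bF}A\le 2^{2n+3}=\dim\RKn$. I would prove that $A$ is spanned by the monomials $\tilde k^{b}K^{a}\,\xi_{i_1}\cdots\xi_{i_p}\,\bar\xi_{j_1}\cdots\bar\xi_{j_q}$ with $a\in\{0,1\}$, $b\in\{0,1,2,3\}$, $i_1<\dots<i_p$, $j_1<\dots<j_q$: the commutation relations push all $\tilde k$'s and then all $K$'s to the left (with signs), the relation $\bar\xi_i\xi_i=1-\tilde k^{2}-\xi_i\bar\xi_i$ moves every $\xi$ left of every $\bar\xi$ at the cost of strictly shorter words (the new $\tilde k^{2}$ being moved left again), and finally $\xi_i^{2}=\bar\xi_i^{2}=0$ together with the anticommutation relations put each block into increasing order or kill it; termination follows by induction on word length and inversion count. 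There are $4\cdot 2\cdot 2^{n}\cdot 2^{n}=2^{2n+3}$ such monomials, so $\dim A\le 2^{2n+3}$ and the surjection is an isomorphism. The only delicate point is this normal-form/spanning step --- a one-sided Bergman-type reduction --- but because we already have a surjection onto an algebra of the known dimension $2^{2n+3}$, confluence is not needed and no genuine obstruction arises.
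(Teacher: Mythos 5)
Your argument is correct and takes essentially the same route as the paper's (terse) prose justification preceding the corollary: invoke Theorem~\ref{sommer} together with the computation $uS(u)^{-1}=(K\bar K)^n=K\bar K$ from Lemma~\ref{Drinfeld-calculations}, identify $\tilde k$ with the grouplike $1\# g = u\rib^{-1}$ so that $\tilde k^2=K\bar K$ and conjugation by $\tilde k$ implements $S^2$, and then eliminate $\bar K=K\tilde k^2$. The one thing you add that the paper leaves implicit is the spanning/dimension count showing the abstractly presented algebra is no bigger than $2^{2n+3}=\dim\RKn$; this is a worthwhile clarification, and your observation that confluence is unnecessary (because a surjection onto an algebra of the right dimension already exists) is exactly the right way to make the Bergman-style reduction painless.
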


\subsection{Representations of $\RKn$}
    For even $n$, by Corollary \ref{ribbon-reps-factor}, $\Rep(\RKn)\cong \Rep(D\Kn)\boxtimes \Vecm$. For odd $n$, the behavior is quite similar, though the category does not factor so nicely. Throughout this section, we assume $n$ is odd.

    In \cite{modulardata2024}, it is shown that there are four simple $D\Kn$-modules, two of which are their own projective covers. There are two one-dimensional simple $D\Kn$-modules $V_1$ and $V_{K\bar K}$. The projective covers $P_1$ and $P_{K\bar K}$ of these modules are each of dimension $2^{2n}$. There are two $2^n$-dimensional projective simple $D\Kn$-modules $V_K$ and $V_{\bar K}$. It follows from Theorems \ref{simples-double} and \ref{indec-projectives-restrict} that all of these counts double. There are eight simple $\RKn$-modules. There are four one-dimensional simple $\RKn$-modules $V_1^\pm$ and $V_{K\bar K}^\pm$. The projective covers $P_1^\pm$ and $P_{K\bar K}^\pm$ of these modules are each of dimension $2^{2n}$. There are four $2^n$-dimensional projective simple $\RKn$-modules $V_K^\pm$ and $V_{\bar K}^\pm$. We now give explicit descriptions of these modules in terms of our presentation of $\RKn$.
    
    Aligning with the notation established in \cite{modulardata2024}, the one-dimensional simple $\RKn$-modules $V_{1}^+, V_{1}^-, V_{K\bar K}^+, V_{K\bar K}^-$ are described by 
    \begin{alignat*}{5}
        \forall \lambda\in V_{1}^+&:& \hspace{1cm}K\cdot\lambda &=& \lambda, \hspace{1cm}\tilde k\cdot \lambda &=& \lambda, \hspace{1cm}\xi_i\cdot\lambda=\bar\xi_i\cdot\lambda &=&\, 0,\\
        \forall \lambda\in V_{1}^-&:&\hspace{1cm}K\cdot\lambda &=& \lambda, \hspace{1cm}\tilde k\cdot \lambda &=&\, -\lambda, \hspace{1cm}\xi_i\cdot\lambda=\bar\xi_i\cdot\lambda &=&\, 0,\\
        \forall \lambda\in V_{K\bar K}^+&:&\hspace{1cm}K\cdot\lambda &=&\, -\lambda, \hspace{1cm}\tilde k\cdot \lambda &=& \lambda, \hspace{1cm}\xi_i\cdot\lambda=\bar\xi_i\cdot\lambda &=&\, 0,\\
        \forall \lambda\in V_{K\bar K}^-&:&\hspace{1cm}K\cdot\lambda &=&\, -\lambda, \hspace{1cm}\tilde k\cdot \lambda &=&\, -\lambda,\hspace{1cm} \xi_i\cdot\lambda=\bar\xi_i\cdot\lambda &=&\, 0.
    \end{alignat*} 
    Observe that $V_1^{\pm}|_{D\Kn}=V_1$, $V_{K\bar K}^{\pm}|_{D\Kn}=V_{K\bar K}$.
    The projective covers $P_1^{\pm}$ and $P_{K\bar K}^\pm$ are given by
    \begin{align*}
        P_1^+ &= \RKn(1 + K)(1 + \tilde k + \tilde k^2 + \tilde k^3),\\
        P_1^- &= \RKn(1 + K)(1 - \tilde k + \tilde k^2 - \tilde k^3),\\
        P_{K\bar K}^+ &= \RKn(1 - K)(1 + \tilde k + \tilde k^2 + \tilde k^3),\\
        P_{K\bar K}^- &= \RKn(1 - K)(1 - \tilde k + \tilde k^2 - \tilde k^3).
    \end{align*}
    They are indeed projective since $P_1^+ \oplus P_1^-\oplus P_{K\bar K}^+\oplus P_{K\bar K}^-\oplus \RKn(1 - \tilde k^2) = \RKn$. The covering maps $P\to V$ are all generated by $(1\pm K)(1\pm \tilde k + \tilde k^2 \pm \tilde k^3)\mapsto 1$. The dimensions of the $P_{1}^\pm$ and $P_{K\bar K}^\pm$ are $2^{2n}$.

    Set, as vector spaces, $V_{K}^{\pm}=V_{\bar K}^{\pm}=(\bC^2)^{\otimes n}$. Define the matrices $\Xi = \begin{bmatrix}
        0 & \sqrt{2}\\
        0 & 0
    \end{bmatrix}$ and $\sigma_Z = \begin{bmatrix}
        1 & 0\\
        0 & -1
    \end{bmatrix}$. Set $\Xi_j = \sigma_Z^{\otimes j-1}\otimes \Xi\otimes I_2^{\otimes n-j}$ and $\bar\Xi_i = \sigma_Z^{\otimes j-1}\otimes \Xi^T\otimes I_2^{\otimes n-j}$. Then, we define the following $\RKn$-module actions: for $j=1,\dots, n$,
    \begin{alignat*}{6}
        \forall v\in V_{K}^+&:& \hspace{1cm}K\cdot v &=& \sigma_Z^{\otimes n} v, \hspace{1cm}\tilde k\cdot v &=& i\sigma_Z^{\otimes n} v, \hspace{1cm}\xi_j\cdot v &=&\, \Xi_j v, \hspace{1cm}\bar\xi_j\cdot v &=&\, \bar\Xi_j v,\\
        \forall v\in V_{K}^-&:& \hspace{1cm}K\cdot v &=& \sigma_Z^{\otimes n} v, \hspace{1cm}\tilde k\cdot v &=& -i\sigma_Z^{\otimes n} v, \hspace{1cm}\xi_j\cdot v &=&\, \Xi_j v, \hspace{1cm}\bar\xi_j\cdot v &=&\, \bar\Xi_j v,\\
        \forall v\in V_{\bar K}^+&:& \hspace{1cm}K\cdot v &=& -\sigma_Z^{\otimes n} v, \hspace{1cm}\tilde k\cdot v &=& i\sigma_Z^{\otimes n} v, \hspace{1cm}\xi_j\cdot v &=&\, \Xi_j v, \hspace{1cm}\bar\xi_j\cdot v &=&\, \bar\Xi_j v,\\
        \forall v\in V_{\bar K}^-&:& \hspace{1cm}K\cdot v &=& -\sigma_Z^{\otimes n} v, \hspace{1cm}\tilde k\cdot v &=& -i\sigma_Z^{\otimes n} v, \hspace{1cm}\xi_j\cdot v &=&\, \Xi_j v, \hspace{1cm}\bar\xi_j\cdot v &=&\, \bar\Xi_j v.
    \end{alignat*}
    Observe that again $V_K^{\pm}|_{D\Kn}=V_K$ and $V_{\bar K}^{\pm}|_{D\Kn}=V_{\bar K}$. Thus, these modules are necessarily simple and are, in particular, singly generated by $|0\rangle^{\otimes n}$. Let $f:V_K^+\to V_K^-$ be a $\RKn$-linear map. By considering the action of $\frac{\xi_1\bar\xi_1}{2}\dots\frac{\xi_n\bar\xi_n}{2}$, we see that $f(|0\rangle^{\otimes n}) = c|0\rangle^{\otimes n}$ for some $c\in \bC$. Therefore, 
    $$-if(|0\rangle^{\otimes n}) = \tilde k\cdot f(|0\rangle^{\otimes n}) = f(\tilde k\cdot|0\rangle^{\otimes n}) = i f(|0\rangle^{\otimes n}).$$
    It follows that $f$ is the zero map, so $V_K^+$ and $V_K^-$ must be distinct. The same argument distinguishes $V_{\bar K}^+$ and $V_{\bar K}^-$.
    
    \begin{proposition}\label{jordan-decomp}
        The Jordan decomposition of $P^{+}_{1}$ and $P^-_{K\bar K}$ consists of $2^{2n-1}$ copies of $V_1^+$ and $2^{2n-1}$ copies of $V_{K\bar K}^-$. The Jordan decomposition of   $P^{-}_{1}$ and $P^+_{K\bar K}$ consists of $2^{2n-1}$ copies of $V_1^-$ and $2^{2n-1}$ copies of $V_{K\bar K}^+$. In particular, the Cartan matrix is
        $$\begin{pmatrix}
            2^{2n-1} & 0 & 0 & 2^{2n-1} & 0 & 0 & 0 & 0\\
            0 & 2^{2n-1} & 2^{2n-1} & 0 & 0 & 0 & 0 & 0\\
            0 & 2^{2n-1} & 2^{2n-1} & 0 & 0 & 0 & 0 & 0\\
            2^{2n-1} & 0 & 0 & 2^{2n-1} & 0 & 0 & 0 & 0\\
            0 & 0 & 0 & 0 & 1 & 0 & 0 & 0\\
            0 & 0 & 0 & 0 & 0 & 1 & 0 & 0\\
            0 & 0 & 0 & 0 & 0 & 0 & 1 & 0\\
            0 & 0 & 0 & 0 & 0 & 0 & 0 & 1
        \end{pmatrix}.$$
    \end{proposition}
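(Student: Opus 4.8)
\emph{Proof proposal.} The plan is to dispatch the four simple projective modules immediately and then determine the composition factors of the four projective covers $P_1^{\pm},P_{K\bar K}^{\pm}$ by exhibiting a central element of $\RKn$ of order $4$. Since $V_K^{\pm}$ and $V_{\bar K}^{\pm}$ are simple and projective, each is its own projective cover, so these four modules contribute the identity block in rows and columns $5$--$8$ of the Cartan matrix; all the remaining work concerns the four projectives $P_1^{\pm},P_{K\bar K}^{\pm}$, each of dimension $2^{2n}$.

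The first step is to observe that $K\tilde k\in Z(\RKn)$. From the presentation of $\RKn$ this is a direct check: $K\tilde k$ commutes with $K$ and $\tilde k$, and using $\tilde k\xi_i=-\xi_i\tilde k$, $K\xi_i=-\xi_iK$ (and the barred analogues, together with $K^2=1$) it also commutes with each $\xi_i$ and $\bar\xi_i$. Conceptually this reflects the fact that $S^2$ is implemented by conjugation by $K$ on $D\Kn$ (e.g.\ $S^2(\xi_i)=-\xi_i=K\xi_iK^{-1}$), while $\tilde k$ implements $S^2$ by Theorem \ref{sommer}. Since $(K\tilde k)^2=\tilde k^2=K\bar K$ we get $(K\tilde k)^4=1$, and as $\operatorname{char}\bF=0$ the central subalgebra $\bF[K\tilde k]\cong\bF[\bZ_4]$ decomposes $\Rep(\RKn)$ into four summands according to the eigenvalue $1,-1,i,-i$ of $K\tilde k\cdot-$. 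Reading off the displayed actions, $K\tilde k\cdot-$ equals $\id$ on $V_1^+$ and $V_{K\bar K}^-$, equals $-\id$ on $V_1^-$ and $V_{K\bar K}^+$, and equals $\pm i\cdot\id$ on the four $2^n$-dimensional simples; in particular no simple module lies in two of these summands.

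The second step locates each projective. Each $P$ is generated by its defining element: e.g.\ $P_1^+=\RKn e$ with $e=(1+K)(1+\tilde k+\tilde k^2+\tilde k^3)$, and a one-line computation using $\tilde k^4=1$ gives $K\tilde k\cdot e=e$; the same computation gives $K\tilde k=-1$ on the generators of $P_1^-$ and $P_{K\bar K}^+$ and $K\tilde k=+1$ on that of $P_{K\bar K}^-$. Because $K\tilde k$ is central, each $\pm1$-eigenspace of $K\tilde k\cdot-$ on $P$ is a submodule; it contains a generator, so it is all of $P$. Hence $K\tilde k$ acts as a single scalar $\pm1$ on each of these projectives, which by the previous paragraph forces that every composition factor of $P_1^+$ and of $P_{K\bar K}^-$ is one of $V_1^+,V_{K\bar K}^-$, and every composition factor of $P_1^-$ and of $P_{K\bar K}^+$ is one of $V_1^-,V_{K\bar K}^+$.

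The third step reads off multiplicities. A composition series of an $\RKn$-module restricts, by Proposition \ref{simples-double}, to a composition series over $D\Kn$; combined with $P_1^{\pm}|_{D\Kn}\cong P_1$, $P_{K\bar K}^{\pm}|_{D\Kn}\cong P_{K\bar K}$ (Theorem \ref{indec-projectives-restrict}) and $V_1^{\pm}|_{D\Kn}=V_1$, $V_{K\bar K}^{\pm}|_{D\Kn}=V_{K\bar K}$, this gives $[P_1^{\pm}:V_1^{\pm}]=[P_1:V_1]$, $[P_1^{\pm}:V_{K\bar K}^{\mp}]=[P_1:V_{K\bar K}]$ and likewise for $P_{K\bar K}^{\pm}$. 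The remaining input is $[P_1:V_1]=[P_1:V_{K\bar K}]=[P_{K\bar K}:V_1]=[P_{K\bar K}:V_{K\bar K}]=2^{2n-1}$, which may be imported from the analysis of $\Rep(D\Kn)$ in \cite{modulardata2024}; alternatively, restricting the idempotent above shows $P_1\cong D\Kn(1+K)(1+\bar K)$, whence $[P_1:V_1]=\dim_{\bF}\End_{D\Kn}(P_1)=\dim_{\bF}(1+K)(1+\bar K)D\Kn(1+K)(1+\bar K)=2^{2n-1}$ by a short trace computation, and $[P_1:V_{K\bar K}]=2^{2n}-2^{2n-1}=2^{2n-1}$ since, by the previous paragraph applied to $P_1^+$, the only composition factors of $P_1$ are $V_1$ and $V_{K\bar K}$, both one-dimensional. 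Assembling these eight columns together with the identity block, in the order $V_1^+,V_1^-,V_{K\bar K}^+,V_{K\bar K}^-,V_K^+,V_K^-,V_{\bar K}^+,V_{\bar K}^-$, yields the asserted Cartan matrix. The one point that is not pure bookkeeping is the centrality of $K\tilde k$; once that is established, the $\bZ_4$-grading it induces does all the structural work, and the numbers $2^{2n-1}$ are simply inherited from $D\Kn$.
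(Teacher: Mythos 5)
Your proof is correct, and it fills in what the paper leaves implicit. The paper's own proof is a one-line appeal to the proof of \cite[Thm.\ 7.2.4]{modulardata2024}, noting only that $\tilde k$ anticommutes with the $\xi_i$; you have essentially unpacked what that remark signals. The key structural observation in both is the same: $K\tilde k$ is a central grouplike of order $4$ in $\RKn$ (the analogue of $K\bar K$ in $D\Kn$, which is what \cite{modulardata2024} uses), and this is exactly what the anticommutation fact establishes. Your $\bZ_4$-grading of $\Rep(\RKn)$ by the eigenvalue of $K\tilde k$ cleanly isolates which simples can appear in each projective, and your use of Prop.~\ref{simples-double} and Thm.~\ref{indec-projectives-restrict} to restrict a composition series to $D\Kn$ and import the multiplicity $2^{2n-1}$ is a tidy way to finish; the alternative computation $\dim \bigl((1+K)(1+\bar K)D\Kn(1+K)(1+\bar K)\bigr)=2^{2n-1}$ also checks out (the right multiplication by $(1+K)(1+\bar K)$ kills exactly the odd-length part of $W\bar W$). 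Compared to the paper, which defers the computational content to its reference, your argument is self-contained and makes the role of the central element explicit; what you lose is brevity, what you gain is a proof that a reader can verify without consulting \cite{modulardata2024}. One small polish: when you assert each $\pm1$-eigenspace of $K\tilde k$ on a projective $P$ is a submodule containing a generator, it is worth stating explicitly that $P$ is cyclic (which is clear from its definition as $\RKn\cdot e$) so that the conclusion ``it is all of $P$'' follows.
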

    \begin{proof}
        This follows from the proof of \cite[Thm. 7.2.4]{modulardata2024}, noting that $\tilde k$ anticommutes with $\xi_i$ for all $i$.
    \end{proof}

    As expected, $uS(u)$ acts as $\id_M$ on the $D\Kn$-modules $M=V_1, V_{K\bar K}$ and $-\id_M$ on $M=V_K, V_{\bar K}$. However, $uS(u)$ does not act as a scalar multiple of $\id_M$ on $M=P_1$ or $M=P_{K\bar K}$.

    \begin{proposition}\label{fusion-rules}
        Let $\cdot:\{+, -\}^2\to\{+, -\}$ be given by $\pm\cdot\pm = +$ and $\pm\cdot\mp = -$, and set $-(\pm) = \mp$. Then, for $s, t\in \{+, -\}$, the following fusion rules hold.
        \begin{align}
            V_{1}^s V_{1}^t \cong V_{K\bar K}^s V_{K\bar K}^t&\cong V_{1}^{s\cdot t},& V_{K\bar K}^s V_{1}^t &\cong V_{K\bar K}^{s\cdot t},\label{fusion:SS}\\
            V_K^s V_{1}^t&\cong V_{K}^{s\cdot t}, & V_{\bar K}^s V_{1}^t&\cong V_{\bar K}^{s\cdot t},\label{fusion:SMa}\\ 
            V_K^s V_{K\bar K}^t&\cong V_{\bar K}^{s\cdot t}, & V_K^s V_{K\bar K}^t&\cong V_{\bar K}^{s\cdot t},\label{fusion:SMb}\\
            V_{1}^s P_{1}^t\cong V_{K\bar K}^s P_{K\bar K}^t&\cong P_{1}^{s\cdot t}, & V_{K\bar K}^s P_{1}^t\cong V_{1}^s P_{K\bar K}^t&\cong P_{K\bar K}^{s\cdot t},\label{fusion:SB}\\
            V_K^s V_K^t\cong V_{\bar K}^s V_{\bar K}^t&\cong P_{K\bar K}^{s\cdot t}, & V_K^s V_{\bar K}^t&\cong P_{1}^{s\cdot t},\label{fusion:MM}
        \end{align}
        \begin{align}
            V_{K}^{s} P_1^{t}\cong V_{K}^{s} P_{K\bar K}^{-t}\cong V_{\bar K}^{s} P_1^{-t}\cong V_{\bar K}^{s} P_{K\bar K}^{t}&\cong 2^{2n-1}V_K^{s\cdot t}\oplus 2^{2n-1}V_{\bar K}^{-s\cdot t},\label{fusion:MB}\\
            P_1^s P_{1}^t \cong P_{K\bar K}^s P_{K\bar K}^t\cong  P_1^s P_{K\bar K}^{-t}&\cong 2^{2n-1}P_1^{s\cdot t}\oplus 2^{2n-1}P_{K\bar K}^{-s\cdot t}.\label{fusion:BB}
        \end{align}
    \end{proposition}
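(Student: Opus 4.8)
The plan is to pull the whole computation back to the already-established fusion rules of $D\Kn$ through the restriction functor, and then recover the $\pm$-labels from the order-four grouplike $\tilde k = u\rib^{-1}$. By Lemma~\ref{tensors-restrict} the restriction functor $\Rep(\RKn)\to\Rep(D\Kn)$ is a tensor functor, so every product $X^s\otimes Y^t$ of the modules listed above restricts to the corresponding product of $D\Kn$-modules, whose decomposition into indecomposables is recorded in \cite{modulardata2024}: $V_1$ is the unit, $V_{K\bar K}$ is an order-two invertible that interchanges $V_K$ and $V_{\bar K}$ under tensoring, $V_K^{\otimes 2}\cong V_{\bar K}^{\otimes 2}\cong P_{K\bar K}$, $V_K\otimes V_{\bar K}\cong P_1$, and the products with $P_1,P_{K\bar K}$ are governed by the $D\Kn$-Cartan data. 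By Theorem~\ref{action-extends} each indecomposable $D\Kn$-module among $V_1,V_{K\bar K},V_K,V_{\bar K},P_1,P_{K\bar K}$ has exactly two lifts, namely the eight $\RKn$-modules enumerated before the proposition, and in every product occurring in \eqref{fusion:SS}--\eqref{fusion:BB} the restriction is either one-dimensional or a direct sum of copies of $V_K,V_{\bar K},P_1,P_{K\bar K}$. Hence the $D\Kn$-restriction together with dimension counts determines every right-hand side up to the $\pm$-labels.

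The label of a lift is detected by $\tilde k$: on a lift of $V_1,V_{K\bar K},P_1$, or $P_{K\bar K}$ it acts on the one-dimensional top by a scalar $\pm 1$, while on $V_K^{\pm}$ and $V_{\bar K}^{\pm}$ it acts on the distinguished line $\bigcap_j\ker(\xi_j\cdot-)=\bC\,|0\rangle^{\otimes n}$ by $\pm i$; in either case $+$ versus $-$ distinguishes the two isomorphism classes. Since $\tilde k$ is grouplike it acts on $X\otimes Y$ as $(\tilde k\cdot-)\otimes(\tilde k\cdot-)$, so these scalars multiply. This already settles every product of one-dimensional modules. Moreover $V_1^+$ is the monoidal unit, and $-\otimes V_1^-$ is a tensor autoequivalence of $\Rep(\RKn)$ that fixes restrictions to $D\Kn$ (since $V_1^-|_{D\Kn}=V_1$) and negates the $\tilde k$-scalar, hence interchanges $N^+\leftrightarrow N^-$ for every indecomposable in sight; using this one reduces each rule to its instance with both superscripts $+$, and all rules with a $V_1^+$ factor become trivial. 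The remaining simple $\otimes$ simple base cases ($V_{K\bar K}^+V_{K\bar K}^+$, $V_K^+V_1^+$, $V_K^+V_{K\bar K}^+$ and the $V_{\bar K}$-analogues) again have simple product, whose label is the product of the two $\tilde k$-scalars on the highest-weight line; this yields \eqref{fusion:SS}, \eqref{fusion:SMa}, \eqref{fusion:SMb}.

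For projective $Q$ and any $X$, the product $X\otimes Q$ is again projective, because projectives form a two-sided tensor ideal in the finite tensor category $\Rep(\RKn)$; hence $X\otimes Q$ is determined up to isomorphism by its class in the Grothendieck ring, and that ring's multiplication is fixed by the simple fusion above together with the classes $[P_1^t]=2^{2n-1}[V_1^t]+2^{2n-1}[V_{K\bar K}^{-t}]$ and $[P_{K\bar K}^t]=2^{2n-1}[V_{K\bar K}^t]+2^{2n-1}[V_1^{-t}]$ from Proposition~\ref{jordan-decomp}. Feeding this in yields \eqref{fusion:SB}, \eqref{fusion:MB}, \eqref{fusion:BB} formally; the opposite signs in the two summands of \eqref{fusion:MB} and \eqref{fusion:BB} are exactly the opposite signs $V_1^t,V_{K\bar K}^{-t}$ carried by the composition factors of $P_1^t$. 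The one genuinely module-theoretic input that remains is the label $w$ in $V_K^s\otimes V_K^t\cong P_{K\bar K}^w$ of \eqref{fusion:MM}. I would compute $w$ from the top: for a simple $S$ one has $\dim\Hom(V_K^s\otimes V_K^t,S)=[\operatorname{top}(V_K^s\otimes V_K^t):S]$, and by rigidity this equals $\dim\Hom(V_K^t,(V_K^s)^*\otimes S)$; one checks $(V_K^s)^*\cong V_{\bar K}^s$ (its restriction is $V_{\bar K}$ by \cite{modulardata2024}, and the sign does \emph{not} flip, by a short computation using $S(\tilde k)=\tilde k^{-1}$ and that $n$ is odd), which reduces the Hom-space to $\dim\Hom(V_K^t,V_K^{s\cdot u})$ and forces $w=s\cdot t$.

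The delicate point — and the main obstacle — is precisely this last sign. It is tempting to read off the label of $V_K^s\otimes V_K^t$ from the $\tilde k$-eigenvalue of $|0\rangle^{\otimes n}\otimes|0\rangle^{\otimes n}$, but that vector lies in the radical rather than the top (the tensor square of a cyclic module need not be generated by the square of the generator), so one must instead pass through $\Hom$-spaces and the dual $(V_K^s)^*$. Getting the label of that dual right — it is $V_{\bar K}^s$, not $V_{\bar K}^{-s}$, which is where oddness of $n$ enters via $\sigma_Z^{\otimes n}|1\rangle^{\otimes n}=-|1\rangle^{\otimes n}$ — is what separates $P_{K\bar K}^{s\cdot t}$ from $P_{K\bar K}^{-s\cdot t}$, and, through the $-\otimes V_1^-$ symmetry and the Grothendieck-ring computation, propagates the correct labels through all of \eqref{fusion:SB}--\eqref{fusion:BB}.
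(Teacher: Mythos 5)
Your approach and the paper's proof agree in structure (pull back to the $D\Kn$ fusion rules of \cite{modulardata2024}, then track $\pm$-labels via the grouplike $\tilde k$), but they diverge on the mechanism for pinning down the labels, and your version is in fact more intrinsic. The paper determines the sign in $V_K^+ V_K^+\cong P_{K\bar K}^{?}$ by invoking an explicit isomorphism from \cite[Thm.\ 7.2.5]{modulardata2024} which sends the generator $(1-K)(1+\sigma\tilde k+\tilde k^2+\sigma\tilde k^3)$ of $P_{K\bar K}^{\sgn\sigma}$ to $|0\rangle^{\otimes n}\otimes|1\rangle^{\otimes n}$, and then simply computes the $\tilde k$-eigenvalue on $|0\rangle^{\otimes n}\otimes|1\rangle^{\otimes n}$. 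Your route through $\Hom(V_K^s\otimes V_K^t,S)\cong\Hom(V_K^t,(V_K^s)^*\otimes S)$ and the dual computation $(V_K^s)^*\cong V_{\bar K}^s$ avoids any appeal to the explicit form of the cited isomorphism; it is a cleaner argument in that respect, at the cost of needing the dual calculation. I verified that $(V_K^s)^*\cong V_{\bar K}^s$ does hold (the distinguished line of $(V_K^s)^*$ is $\langle 1|^{\otimes n}$, on which $\tilde k$ acts by $(-si)\cdot(-1)^n=si$ for odd $n$), so that step is sound, though you should spell it out. Your observation that $|0\rangle^{\otimes n}\otimes|0\rangle^{\otimes n}$ lies in the radical (it has $K$-eigenvalue $+1$ while the top $V_{K\bar K}$ has $K$-eigenvalue $-1$) is correct and is a genuine pitfall, though the paper sidesteps it by using $|0\rangle^{\otimes n}\otimes|1\rangle^{\otimes n}$ rather than $|0\rangle^{\otimes n}\otimes|0\rangle^{\otimes n}$.

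Two points of imprecision are worth fixing. First, $-\otimes V_1^-$ is not a \emph{tensor} autoequivalence (tensoring with a non-unit invertible object is never monoidal); what you actually use is that $V_1^-$ lies in the M\"uger center, so associativity lets you move $V_1^-$-factors around freely and $N^\pm\otimes V_1^-\cong N^\mp$, which is all the reduction requires. Second, the claim that a projective object is ``determined up to isomorphism by its class in the Grothendieck ring'' is false here: by Proposition~\ref{jordan-decomp} one has $[P_1^+]+[P_{K\bar K}^-]=[P_1^-]+[P_{K\bar K}^+]$, so the Cartan matrix is singular. Your computation still goes through, but only because you also constrain the answer by the $D\Kn$-restriction (which bounds the total multiplicity of each $P_?^+\oplus P_?^-$) and use non-negativity of multiplicities; you should state that you are combining these three inputs rather than relying on the Grothendieck class alone. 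The paper instead derives \eqref{fusion:MB} from Proposition~\ref{jordan-decomp} plus \eqref{fusion:SMa}--\eqref{fusion:SMb} (the summands are projective simples, so the Jordan--H\"older data literally is the direct-sum decomposition), and \eqref{fusion:BB} by associativity from \eqref{fusion:MM} and \eqref{fusion:MB}, which is perhaps a more economical way to reach the same conclusion and avoids the Cartan-matrix subtlety entirely.
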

    \begin{proof}
        Isomorphisms \eqref{fusion:SS}-\eqref{fusion:MB} almost follow from \cite[Thm. 7.2.5]{modulardata2024} (and the comment after). However, care should be taken in verifying the sign of the modules. For example, in the fourth line, there is an isomorphism $P_{K\bar K}^s\to V_K^+V_K^+$ generated by $(1-K)(1+\sigma\tilde k+\tilde k^2+\sigma\tilde k^3)\mapsto |0\rangle^{\otimes n}\otimes |1\rangle^{\otimes n}$ for some $\sigma\in\{1, -1\}$, where $s=\sgn\sigma$, but $s$ must be determined by the action of $\tilde k$. In this case, since $\tilde k$ is group-like, it acts as 
        $$\tilde k\cdot (|0\rangle^{\otimes n}\otimes |1\rangle^{\otimes n}) = (\tilde k |0\rangle^{\otimes n}\otimes \tilde k |1\rangle^{\otimes n}) = (i|0\rangle^{\otimes n}\otimes (-1)^n i|1\rangle^{\otimes n}) = |0\rangle^{\otimes n}\otimes |1\rangle^{\otimes n}.$$ 
        Thus, $\tilde k$, by the isomorphism, must act by the identity on the generator $(1-K)(1+\sigma\tilde k+\tilde k^2+\sigma\tilde k^3)$ of $P_{K\bar K}^s$, meaning $\sigma=1$ and $s=+$.

        Isomorphisms \eqref{fusion:MB} follows from Proposition \ref{jordan-decomp} and Isomorphisms \eqref{fusion:SMa} and \eqref{fusion:SMb}. Isomorphisms \eqref{fusion:BB} follows from Isomorphisms \eqref{fusion:MM} and \eqref{fusion:MB}.
    \end{proof}

    Propositions \ref{jordan-decomp} and \ref{fusion-rules} give some experimental evidence of the following general conjecture, which is also true when $H$ is ribbon. 
    \begin{conj}\label{tensor-only-one}
        Let $H$ be a finite-dimensional quasitriangular Hopf algebra. Let $M_1, M_2$ be indecomposable $H$-modules and $\tilde M_1, \tilde M_2$ be $\H$-modules for which $\tilde M_1|_H\cong M_1$ and $\tilde M_2|_H\cong M_2$ as $H$-modules. 
        \begin{enumerate}
            \item Suppose the simple $H$-module $V$ is a composition factor of $M_1\otimes M_2$. Then, there is exactly one $\H$-module $\tilde V$ (up to isomorphism) for which $\tilde V|_H\cong V$ and $\tilde V$ is a composition factor of $\tilde M_1\otimes \tilde M_2$.
            \item Suppose the indecomposable $H$-module $M$ is a direct summand of $M_1\otimes M_2$ as $H$-modules. Then, there is exactly one $\H$-module $\tilde M$ (up to isomorphism) for which $\tilde M|_H\cong M$ and $\tilde M$ is a direct summand of $\tilde M_1\otimes \tilde M_2$ as $\H$-modules.
        \end{enumerate}
    \end{conj}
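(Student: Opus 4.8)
The plan is to reduce the whole statement to a single concrete assertion about how the monodromy $R_{21}R$ acts on the $H$-module $M_1\otimes M_2$, and then to flag that assertion as the essential difficulty.

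First, (2) follows from (1) together with the preceding conjecture. For an indecomposable $H$-module $M$, the lifts $M^+,M^-$ of Theorem \ref{action-extends} are distinct, and $M^-\cong V_1^-\otimes M^+$: since $V_1^-$ restricts to the trivial $H$-module, $R_{21}R$ acts as the identity on $V_1^-\otimes X$, so $\rib$ acts there as $-\rib_X$, and tensoring with $V_1^-$ replaces each $\H$-composition factor by the one with the opposite $\rib$-eigenvalue. Also, because $uS(u)$ acts on the indecomposable $M$ with a single eigenvalue (the Corollary after Theorem \ref{action-extends}), $\rib=\rib_M$ acts on $M^+$ with a single eigenvalue, hence by the same scalar on every composition factor; so every composition factor of $M^+$ is of the form $V^+$ and every composition factor of $M^-$ is of the form $V^-$. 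Now, granting the preceding conjecture, decompose $\tilde M_1\otimes\tilde M_2\cong\bigoplus_j\tilde P_j$ into $\H$-indecomposables; restricting to $H$ (Lemma \ref{tensors-restrict}) and applying Krull--Schmidt identifies $\{\tilde P_j|_H\}$ with the multiset of indecomposable summands of $M_1\otimes M_2$. If an indecomposable $M$ occurred as both $M^+$ and $M^-$ among the $\tilde P_j$, any simple composition factor $V$ of $M$ would occur as both $V^+$ and $V^-$ among the composition factors of $\tilde M_1\otimes\tilde M_2$, contradicting (1); this gives uniqueness in (2), and existence is itself part of the preceding conjecture.

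For (1), use $\Delta(\rib)=(R_{21}R)^{-1}(\rib\otimes\rib)$ to write the action of the central element $\rib$ on $N:=\tilde M_1\otimes\tilde M_2$ as the composite of the operator $(R_{21}R)^{-1}\in H\otimes H$ acting on $M_1\otimes M_2$ with $\rib_{M_1}\otimes\rib_{M_2}$. Two points make this decomposition rigid: $R_{21}R$ commutes with $\Delta(H)$ (a standard identity for $R$-matrices), so $(R_{21}R)^{-1}$ is an $H$-linear endomorphism of $M_1\otimes M_2$ which moreover commutes with the $H\otimes H$-linear operator $\rib_{M_1}\otimes\rib_{M_2}$, hence is $\H$-linear on $N$; and since each $M_i$ is indecomposable, $uS(u)$ acts on $M_i$ with a single eigenvalue $\mu_i$, so $\rib_{M_i}\in\End_H(M_i)$ --- a local ring --- has a single eigenvalue $\nu_i$ with $\nu_i^2=\mu_i$, whence $\rib_{M_1}\otimes\rib_{M_2}$ has the single eigenvalue $\nu_1\nu_2$ and acts as that scalar on every composition factor of $N$. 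Therefore, for a composition factor of $N$ restricting to the simple $H$-module $V$, the scalar by which $\rib$ acts --- exactly the datum separating $V^+$ from $V^-$ --- equals $\nu_1\nu_2$ times the eigenvalue of $(R_{21}R)^{-1}$ on the corresponding part of $M_1\otimes M_2$. Writing $M_1\otimes M_2=\bigoplus_\gamma E_\gamma$ for the generalized-eigenspace decomposition of the $H$-linear operator $(R_{21}R)^{-1}$ (an $H$-module decomposition), statement (1) becomes exactly: for every simple $H$-module $V$, all composition factors of $M_1\otimes M_2$ isomorphic to $V$ lie in a single summand $E_\gamma$; equivalently, inside each generalized eigenspace of $(R_{21}R)^{-2}$ the two eigenspaces of $(R_{21}R)^{-1}$ attached to its two square roots have disjoint composition factors.

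This last assertion is the crux, and I expect it to be the real obstacle --- presumably why the statement is posed as a conjecture. It is not formal even within a single semisimple Loewy layer of $M_1\otimes M_2$: there the $H$-linear operator $(R_{21}R)^{-1}$ need only square to a scalar matrix on each $V$-isotypic block, so both branches can a priori appear, and a genuinely braided input is needed to exclude this. The approaches I would try, in order, are: (a) an associativity argument --- compute the $\rib$-eigenvalue of a fixed composition factor of $\tilde M_1\otimes\tilde M_2\otimes\tilde M_3$ through the two bracketings and use the hexagon/pentagon coherence to propagate the single-branch property; (b) first prove the case where $M_1\otimes M_2$ is projective (e.g.\ $M_2$ projective), where $\tilde M_1\otimes\tilde M_2$ is itself projective and restricts to $M_1\otimes M_2$, then bootstrap to general $M_i$ via their projective covers; (c) a continuity/deformation argument following the branch of $(R_{21}R)^{-1}$ along a path from the identity. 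In every case the target identity is verified directly when $H$ is ribbon (Corollary \ref{ribbon-reps-factor}) and for the doubled Nichols algebras (Propositions \ref{jordan-decomp} and \ref{fusion-rules}), which is the evidence that it should hold in general.
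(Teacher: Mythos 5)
This statement is Conjecture~\ref{tensor-only-one} in the paper, and the paper offers no proof of it --- it is stated as an open question, supported only by the evidence of Corollary~\ref{ribbon-reps-factor} (the ribbon case) and Propositions~\ref{jordan-decomp} and~\ref{fusion-rules} (the $D\Kn$ case). So there is no paper argument to compare yours against; what you have written is, correctly, an analysis rather than a proof, and you are right to flag it as such.

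The reductions you carry out are sound. The identification $M^-\cong V_1^-\otimes M^+$ (using that $R_{21}R$ acts trivially once one tensor factor restricts to the trivial $H$-module), the observation that $\rib_{M_i}$ has a single eigenvalue on an indecomposable $M_i$ (via the Corollary after Theorem~\ref{action-extends}, or via locality of $\End_H(M_i)$), the factorization $\rib_N=(R_{21}R)^{-1}\circ(\rib_{M_1}\otimes\rib_{M_2})$ with both factors $\H$-linear, and the derivation of~(2) from~(1) together with the preceding Conjecture and Krull--Schmidt --- all of this is correct and cleanly stated. You have accurately reduced~(1) to a rigidity statement about how the monodromy operator $(R_{21}R)^{-1}$, which is $H$-linear on $M_1\otimes M_2$, distributes the occurrences of each simple $V$ among its generalized eigenspaces (whose eigenvalues on those copies are constrained to a fixed $\pm\gamma_0$ pair by the $\Delta(uS(u))$ identity). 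This is a genuinely nontrivial claim that is not established by anything in the paper, and your recognition that it is the crux, together with your candid assessment that no formal argument is available within a single Loewy layer, is exactly the right conclusion. The three strategies you propose (coherence/associativity, projective bootstrap, deformation of branch) are plausible lines of attack; none is carried out in the paper, so there is nothing further to compare. One cosmetic remark: your final paraphrase about ``the two square roots of $(R_{21}R)^{-2}$'' reads as if one is choosing a square root, whereas $(R_{21}R)^{-1}$ is a fixed operator; the substance (uniformity of the sign of its eigenvalue across all copies of a fixed $V$) is right, but the phrasing could mislead.
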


    As a special case of Conjecture \ref{tensor-only-one}, we could set $M_1$ to be the trivial $H$-module in (1) to obtain the following; if the simple $H$-module $V$ is a composition factor of $M$ as $H$-modules, then there is exactly one $\H$-module $\tilde V$ (up to isomorphism) for which $\tilde V|_H\cong V$ and $\tilde V$ is a direct summand of $\tilde M$ as $\H$-modules. This is again supported by Proposition \ref{jordan-decomp}.

    \begin{remark}
        Despite having modules which seem to pair up, neither $D\Kn$ nor $\Kn$ is of the form $\H$ (as Hopf algebras) for some other quasitriangular Hopf algebra $H$. By factorizability, $D\Kn$ for even $n$ cannot be written as $D\Kn\cong \H$ for some $H$. If $\Kn\cong \H$, then by Proposition \ref{simples-double}, $\Rep(H)$ would have precisely one simple object up to isomorphism. This implies $H$ is semisimple. By Corollary \ref{semisimple-iff}, this would imply $\H\cong \Kn$ is semisimple, which is a contradiction.
    \end{remark} 

    \begin{proposition}\label{not-deligne}
        For odd $n$, $\Rep(\RKn)\not\cong \Rep(D\Kn)\boxtimes\Vecm$ as monoidal categories.
    \end{proposition}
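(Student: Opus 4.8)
The plan is to distinguish the two monoidal categories by a numerical invariant attached to projective simple objects, namely the multiplicity of the unit as a composition factor of a tensor square. Any monoidal equivalence preserves the unit object, carries simple objects to simple objects and projective objects to projective objects (simplicity and projectivity being purely abelian-categorical), is compatible with $\otimes$, and preserves composition-factor multiplicities; so it is enough to compute, on each side, $[X\otimes X:\mathbf 1]$ as $X$ runs over the projective simple objects and to observe a discrepancy. It is worth noting first that the more obvious invariants do not separate the two: the pointed subcategories, the orbits of simples under tensoring by invertibles, and even the Cartan matrix all match up, so the argument must reach into the fusion rules.

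First I would pin down the objects involved. On the $\Rep(\RKn)$ side the unit is the trivial module $V_1^+$ and the projective simple objects are exactly $V_K^\pm$ and $V_{\bar K}^\pm$; the remaining indecomposable projectives $P_1^\pm, P_{K\bar K}^\pm$ are not simple, by Proposition~\ref{jordan-decomp}. On the $\Rep(D\Kn)\boxtimes\Vecm$ side the unit is $V_1\boxtimes V^+$, the simple objects are the external products $V\boxtimes V^\epsilon$ with $V$ simple over $D\Kn$ and $\epsilon\in\{+,-\}$, the indecomposable projectives are the products $P(V)\boxtimes V^\epsilon$, and hence the projective simple objects are precisely $V_K\boxtimes V^\epsilon$ and $V_{\bar K}\boxtimes V^\epsilon$. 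In particular both categories have exactly four projective simple objects.

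Next I would evaluate the invariant. On the $\Rep(\RKn)$ side, Isomorphisms~\eqref{fusion:MM} give $X\otimes X\cong P_{K\bar K}^+$ for \emph{every} projective simple $X$ (both $(V_K^s)^{\otimes 2}$ and $(V_{\bar K}^s)^{\otimes 2}$ land in $P_{K\bar K}^{+}$, since $s\cdot s=+$), and Proposition~\ref{jordan-decomp} identifies the composition factors of $P_{K\bar K}^+$ as $V_1^-$ and $V_{K\bar K}^+$; the unit $V_1^+$ does not occur, so $[X\otimes X:\mathbf 1]=0$ for every projective simple $X$. On the Deligne-product side, $(V\boxtimes V^\epsilon)^{\otimes 2}\cong(V\otimes V)\boxtimes V^+$, and composition-factor multiplicities in a Deligne product multiply coordinatewise, so $[(V\boxtimes V^\epsilon)^{\otimes 2}:\mathbf 1]=[V\otimes V:V_1]$; and $[V_K\otimes V_K:V_1]=[V_{\bar K}\otimes V_{\bar K}:V_1]=2^{2n-1}$, which follows by restricting the isomorphisms $(V_K^+)^{\otimes 2}\cong(V_{\bar K}^+)^{\otimes 2}\cong P_{K\bar K}^+$ of $\RKn$-modules along $D\Kn\subset\RKn$ (Lemma~\ref{tensors-restrict} and Theorem~\ref{indec-projectives-restrict}) and reading off composition factors from Proposition~\ref{jordan-decomp}, or directly from \cite{modulardata2024}. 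Hence on this side $[X\otimes X:\mathbf 1]=2^{2n-1}\neq 0$ for every projective simple $X$, and no monoidal equivalence can match the two.

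The individual computations are routine given the fusion rules and Jordan data already in hand; the step I expect to require the most care is the Deligne-product bookkeeping — checking that the projective simples of $\Rep(D\Kn)\boxtimes\Vecm$ are exactly the external products of projective simples, that $\otimes$ and composition series of external products factor coordinatewise, and, above all, that $P_{K\bar K}$ (the $D\Kn$-module underlying $P_{K\bar K}^\pm$) genuinely contains the trivial module as a composition factor rather than being concentrated on a single simple. Conceptually, the feature being exploited is the ``sign mismatch'' visible in Proposition~\ref{jordan-decomp}: the tensor square of a $+$-type module has composition factors of \emph{both} signs, which $\Rep(D\Kn)\boxtimes\Vecm$ cannot reproduce, since there the $\Vecm$-part is a literal external tensor factor and so cannot interact with the block structure inherited from $\Rep(D\Kn)$.
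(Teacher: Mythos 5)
Your argument is correct. Monoidal equivalences preserve the unit, simples, projectives, tensor products, and composition-factor multiplicities, so the multiset of values $[X\otimes X:\mathbf 1]$ as $X$ ranges over the projective simple objects is a well-defined invariant. On the $\Rep(\RKn)$ side, Isomorphisms~\eqref{fusion:MM} give $X\otimes X\cong P_{K\bar K}^+$ for every projective simple $X$, and Proposition~\ref{jordan-decomp} (equivalently the Cartan matrix entry $[P_{K\bar K}^+:V_1^+]=0$) shows the unit $V_1^+$ is not a composition factor, so the invariant is $0$ four times. On the Deligne-product side it is $[V_K\otimes V_K:V_1]=[V_{\bar K}\otimes V_{\bar K}:V_1]=2^{2n-1}$ four times, which you correctly extract from the restriction of the $\RKn$-isomorphisms to $D\Kn$ via Lemma~\ref{tensors-restrict} and Theorem~\ref{indec-projectives-restrict}. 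The two multisets disagree, so no monoidal equivalence exists.

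Your route is genuinely different from the paper's. The paper's proof (terse as it is) asserts an incompatibility between Isomorphisms~\eqref{fusion:MM} and~\eqref{fusion:BB}: tracing the superscript signs, a hypothetical equivalence $F$ forces $F(P_{K\bar K}^+)$ and $F(P_{K\bar K}^-)$ to share the same $\Vecm$-component, contradicting injectivity on isomorphism classes; this requires chasing the sign data through the tensor products of the big projectives in~\eqref{fusion:BB}. Your argument avoids~\eqref{fusion:BB} entirely, substituting Proposition~\ref{jordan-decomp} for it, and compresses the sign-mismatch phenomenon into a single numerical obstruction that is immediately visible to be a monoidal invariant. Both proofs ultimately rest on the same structural fact — in $\Rep(D\Kn)\boxtimes\Vecm$ the $\Vecm$-grading is respected by $\otimes$, whereas in $\Rep(\RKn)$ tensoring a $+$-object with itself produces composition factors of both signs — but your packaging is more self-contained and requires less case bookkeeping than what the paper's one-line pointer to~\eqref{fusion:MM} and~\eqref{fusion:BB} leaves to the reader.
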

    \begin{proof}
         One can verify that such an equivalence cannot simultaneously preserve both Isomorphisms \eqref{fusion:MM} and \eqref{fusion:BB}.
    \end{proof}
    
\section*{Acknowledgements}
The author is supported by the National Science Foundation Graduate Research Fellowship Program under Grant No. 2139319. Any opinions, findings, and conclusions or recommendations expressed in this material are those of the author and do not necessarily reflect the views of the National Science Foundation. 

The author thanks Y. Sommerh\"auser for a helpful discussion on cocycled crossed products. The author also thanks B. R. Jones for illustrating the proof of Lemma \ref{sqrt-uSu} with the holomorphic functional calculus in the complex case. Finally, the author thanks K. Goodearl, Z. Wang, and Q. Zhang for many useful comments.

\bibliographystyle{abbrv}
\bibliography{zbib}

\begin{thebibliography}{10}

\bibitem{Agore2013}
A.~L. Agore.
\newblock Crossed {P}roduct of {H}opf {A}lgebras.
\newblock {\em Communications in Algebra}, 41(7):2519--2542, 2013.

\bibitem{Agore2011extending}
A.~L. Agore and G.~Militaru.
\newblock {Extending structures II: The quantum version}.
\newblock {\em Journal of Algebra}, 336(1):321--341, 2011.

\bibitem{Andruskiewitsch2014hopftensor}
N.~Andruskiewitsch, I.~Angiono, A.~Garc{\'i}a~Iglesias, B.~Torrecillas, and C.~Vay.
\newblock From {H}opf {A}lgebras to {T}ensor {C}ategories.
\newblock In {\em Conformal Field Theories and Tensor Categories}, pages 1--31. Springer Berlin Heidelberg, 2014.

\bibitem{BlattnerCohen1986}
R.~J. Blattner, M.~Cohen, and S.~Montgomery.
\newblock Crossed products and inner actions of {H}opf algebras.
\newblock {\em Transactions of the American Mathematical Society}, 298(2):671--711, 1986.

\bibitem{BruguieresNatale11}
A.~Bruguières and S.~Natale.
\newblock {Exact {S}equences of {T}ensor {C}ategories}.
\newblock {\em International Mathematics Research Notices}, 2011(24):5644--5705, 01 2011.

\bibitem{modulardata2024}
L.~Chang, Q.~T. Kolt, Z.~Wang, and Q.~Zhang.
\newblock Modular data of non-semisimple modular categories.
\newblock arXiv:2404.09314, 2024.

\bibitem{cohen2008characters}
M.~Cohen and S.~Westreich.
\newblock Characters and a {V}erlinde-type formula for symmetric {H}opf algebras.
\newblock {\em Journal of Algebra}, 320(12):4300--4316, 2008.

\bibitem{Costantino:2023bjb}
F.~Costantino, N.~Geer, B.~Ha\"\i{}oun, and B.~Patureau-Mirand.
\newblock {Skein (3+1)-TQFTs from non-semisimple ribbon categories}.
\newblock arXiv:2306.03225, 2023.

\bibitem{Crane1997statesum}
L.~Crane, L.~H. Kauffman, and D.~N. Yetter.
\newblock State-sum invariants of 4-manifolds.
\newblock {\em Journal of Knot Theory and Its Ramifications}, 06(02):177--234, 1997.

\bibitem{DeRenzi2022tqft}
M.~De~Renzi, A.~M. Gainutdinov, N.~Geer, B.~Patureau-Mirand, and I.~Runkel.
\newblock {3-Dimensional TQFTs from non-semisimple modular categories}.
\newblock {\em Selecta Mathematica}, 28(2):42, Jan 2022.

\bibitem{EGNO}
P.~Etingof, S.~Gelaki, D.~Nikshych, and V.~Ostrik.
\newblock {\em Tensor categories}, volume 205 of {\em Mathematical Surveys and Monographs}.
\newblock American Mathematical Society, Providence, RI, 2015.

\bibitem{ENO2004Radford}
P.~Etingof, D.~Nikshych, and V.~Ostrik.
\newblock {An analogue of Radford's $S^4$ formula for finite tensor categories}.
\newblock {\em International Mathematics Research Notices}, 2004(54):2915--2933, 01 2004.

\bibitem{eno2005fusion}
P.~Etingof, D.~Nikshych, and V.~Ostrik.
\newblock On fusion categories.
\newblock {\em Journal of Algebra}, 162:581--642, 2005.

\bibitem{etingof2004finite}
P.~Etingof and V.~Ostrik.
\newblock Finite tensor categories.
\newblock {\em Moscow Mathematical Journal}, 4(3):627--654, 782--783, 2004.

\bibitem{farsad2022symplectic}
V.~Farsad, A.~M. Gainutdinov, and I.~Runkel.
\newblock The symplectic fermion ribbon quasi-{H}opf algebra and the {$\operatorname{SL}(2,\mathbb Z)$}-action on its centre.
\newblock {\em Advances in Mathematics}, 400:108247, 2022.

\bibitem{Henriques2015CategorifiedTF}
A.~Henriques, D.~Penneys, and J.~E. Tener.
\newblock Categorified trace for module tensor categories over braided tensor categories.
\newblock {\em Documenta Mathematica}, pages 1089--1149, 2015.

\bibitem{kauffman1993necessary}
L.~H. Kauffman and D.~E. Radford.
\newblock A necessary and sufficient condition for a finite-dimensional {D}rinfeld double to be a ribbon {H}opf algebra.
\newblock {\em Journal of Algebra}, 159(1):98--114, 1993.

\bibitem{kerler2003homology}
T.~Kerler.
\newblock Homology {TQFT}'s and the {A}lexander-{R}eidemeister invariant of 3-manifolds via {H}opf algebras and skein theory.
\newblock {\em Canadian Journal of Mathematics}, 55(4):766--821, 2003.

\bibitem{panaite1999quasitriangular}
F.~Panaite and F.~Van~Oystaeyen.
\newblock Quasitriangular structures for some pointed {H}opf algebras of dimension $2^n$.
\newblock {\em Commununications in Algebra}, 27(10):4929--4942, 1999.

\bibitem{Reshetikhin1991invariants}
N.~Reshetikhin and V.~G. Turaev.
\newblock Invariants of 3-manifolds via link polynomials and quantum groups.
\newblock {\em Inventiones mathematicae}, 103(1):547--597, Dec 1991.

\bibitem{ReshetikhinTuraev1990}
N.~Y. Reshetikhin and V.~G. Turaev.
\newblock {Ribbon graphs and their invariants derived from quantum groups}.
\newblock {\em Communications in Mathematical Physics}, 127(1):1--26, 1990.

\bibitem{Reutter2023semisimple}
D.~Reutter.
\newblock {Semisimple four-dimensional topological field theories cannot detect exotic smooth structur}e.
\newblock {\em Journal of Topology}, 16(2):542--566, 2023.

\bibitem{shimizu2019non}
K.~Shimizu.
\newblock Non-degeneracy conditions for braided finite tensor categories.
\newblock {\em Advances in Mathematics}, 355:106778, 36, 2019.

\bibitem{turaev1992modular}
V.~G. Turaev.
\newblock Modular categories and 3-manifold invariants.
\newblock {\em International Journal of Modern Physics B}, 06(11n12):1807--1824, 1992.

\end{thebibliography}
\end{document}